\renewcommand{\gets}{\leftarrow}
\renewcommand{\bC}{\mathbb{C}}
\renewcommand{\bR}{\mathbb{R}}
\newcommand{\muqr}{\nu_{\mathsf{IQR}}}
\newcommand{\croot}{c_{\mathsf{root}}}
\newcommand{\Croot}{C_{\mathsf{root}}}
\newcommand{\cu}{C_{\mathsf{U}}}
\newcommand{\cd}{C_{\mathsf{D}}}
\newcommand{\Cg}{C_{\mathsf{G}}}
\newcommand{\false}{\texttt{false}}
\newcommand{\true}{\texttt{true}}
\newcommand{\deflate}{\mathsf{Deflate}}
\newcommand{\scale}{\Sigma} 
\newcommand{\cor}{\mathsf{correctness}}
\newcommand{\error}{\mathsf{error}}
\newcommand{\chs}{\check{s}}
\newcommand{\iqr}{\mathsf{IQR}}
\newcommand{\dispec}{\mathsf{DistSpec}}
\newcommand{\oneig}{\mathsf{OneEig}}
\newcommand{\comptau}[1]{\mathsf{Tau}^{#1}}
\newcommand{\hform}{\mathsf{HessBU}}
\newcommand{\rhform}{\mathsf{RHess}}
\newcommand{\hous}[1]{\mathsf{Hous}(#1)}
\newcommand{\smalleig}{\mathsf{SmallEig}}
\newcommand{\findritz}{\mathsf{SmallEig}}
\newcommand{\eig}{\mathsf{SmallEig}}
\newcommand{\unif}{\mathrm{Unif}}
\newcommand{\wcc}{\omega}
\newcommand{\tol}{\eta_1}
\newcommand{\pretol}{\eta_2}
\newcommand{\tolr}{\eta_1}
\newcommand{\pretolr}{\eta_2}
\newcommand{\chn}{\check{\mathcal{N}}}
\newcommand{\wfail}{\varphi}
\newcommand{\forward}{\beta}
\newcommand{\fward}{\beta}
\newcommand{\bck}{\delta}
\newcommand{\ch}{c_{\mathsf{H}}}
\newcommand{\chh}{c_{\mathsf{h}}}
\newcommand{\crh}{c_{\mathsf{RH}}}
\newcommand{\Rho}{\mathrm{P}}
\newcommand{\Beta}{\mathrm{Beta}}
\newcommand{\area}{\mathrm{Area}}
\newcommand{\Spec}[1]{\mathrm{Spec}\,{#1}}
\newcommand{\dist}{\mathrm{dist}}
\newcommand{\bN}{\mathbb{N}}
\renewcommand{\bS}{\mathbb{S}}
\newcommand{\dd}{\tau}
\newcommand{\nn}{\ax{\tau^m}}
\newcommand{\rad}{C}
\newcommand{\qr}{\mathrm{qr}}
\newcommand{\shat}{\zeta}
\newcommand{\ds}[2]{\mathrm{dist}(#1, \Spec{#2})}
\newcommand{\rt}{\mathsf{root}}
\newcommand{\hess}{H}
\newcommand{\w}{w}
\newcommand{\E}{\mathbb{E}}
\renewcommand{\P}{\mathbb{P}}
\newcommand{\ax}[1]{\widetilde{#1}}
\renewcommand{\next}[1]{\widehat{#1}}
\newcommand{\fl}{\mathsf{fl}}
\newcommand{\mach}{\textbf{\textup{u}}}
\newcommand{\acc}{\delta}
\newcommand{\wacc}{\omega}
\newcommand{\gap}{\mathrm{gap}}
\newcommand{\absacc}{\Delta}
\renewcommand{\r}{\theta}
\newcommand{\decouple}{{\mathsf{Decouple}}}
\title{Global Convergence of  Hessenberg Shifted QR III: Approximate Ritz Values via Shifted Inverse Iteration}
\author{Jess Banks\thanks{\texttt{jess.m.banks@berkeley.edu}. Supported by NSF GRFP Grant DGE-1752814 and NSF Grant  CCF-2009011.}\\ UC Berkeley \and  Jorge Garza-Vargas\thanks{\texttt{jgarzavargas@berkeley.edu}. Supported by NSF Grant  CCF-2009011.}\\ UC Berkeley \and  Nikhil Srivastava\thanks{\texttt{nikhil@math.berkeley.edu}. Supported by NSF Grant  CCF-2009011.}  \\    UC Berkeley }
\begin{document}
\maketitle

\begin{abstract}
    We give a self-contained randomized algorithm based on shifted inverse iteration which provably computes the eigenvalues of an arbitrary matrix $M\in\bC^{n\times n}$ up to backward error $\delta\|M\|$ in $O(n^4+n^3\log^2(n/\delta)+\log(n/\delta)^2\log\log(n/\delta))$ floating point operations using $O(\log^2(n/\delta))$ bits of precision. While the $O(n^4)$ complexity is prohibitive for large matrices, the algorithm is simple and may be useful for provably computing the eigenvalues of small matrices using controlled precision, in particular for computing Ritz values in shifted QR algorithms as in \cite{banks2022II}.
\end{abstract}


\tableofcontents


\section{Introduction}

In Part I of this series \cite{banks2021global} we gave a family of shifting strategies, of some suitable degree $k$, for which the Hessenberg shifted QR algorithm converges globally and rapidly in exact arithmetic on nonsymmetric matrices with controlled eigenvector condition number. Our analysis relied on the existence of an algorithm, which we called a \emph{Ritz value finder}, which on a matrix input $H$ and accuracy parameter $\r>1$ would compute a set $\calR = \{r_1, \dots, r_k\}$ of $\r$-optimal Ritz values for $H$, i.e. a set of complex numbers satisfying 
\begin{equation*}
    \left\|e_n^* \prod_{i\le k}(H-r_i)\right\|^{1/k}\le \r  \min_{p\in\calP_k} \|e_n^*p(H)\|^{1/k},
\end{equation*}
where $\calP_k$ denotes the set of monic polynomials of degree $k$. Then, in Part II \cite{banks2022II} we showed that any algorithm that could solve the forward-error eigenproblem could be used (on the lower-right $k\times k$ corner of $H$) to build a Ritz value finder.   In this paper (Part III of the series) we complete our analysis by presenting a randomized algorithm $\eig$, based on shifted inverse iteration, that can solve this eigenvalue problem on any input $M\in \bC^{n\times n}$ using a controlled amount of precision in floating point arithmetic.  Our main result can be stated as follows.

\begin{theorem}         
\label{thm:main}
On any input matrix $M\in \bC^{n\times n}$, accuracy parameter $\delta >0$, and failure probability tolerance $\phi>0$, the algorithm $\eig(M, \delta, \phi)$ produces, with probability $1-\phi$, the eigenvalues of a matrix $\ax{M}\in \bC^{n\times n}$ with $\|M-\ax{M}\|\leq \delta \|M\|$, using at most 
$$O\big(n^4+ n^3 \log (n/\delta\phi)^2+\log(n/\delta\phi)^2 \log \log(n/\delta\phi)    \big)$$
arithmetic operations on a floating point machine with $O(\log(n/\delta \phi)^2)$ bits of precision. 
\end{theorem}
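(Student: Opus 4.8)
\emph{Proof plan.} The plan is to first replace $M$ by a small random perturbation on which the spectral problem is quantitatively well posed, and then to peel off eigenvalues one at a time with a deflation loop whose inner routine $\oneig$ first \emph{localizes} an eigenvalue by adaptively quartering a disk containing the spectrum and then \emph{polishes} it by shifted inverse iteration. \textbf{Step 1 (preconditioning, $\fix$).} Compute a norm estimate $s$ with $\|M\|\le s\le\sqrt n\,\|M\|$ (Frobenius norm, $O(n^2)$ flops) and set $M_0:=M+\gamma G$ with $\gamma:=\delta s/(\cc\, n)$, $G$ a complex Ginibre matrix and $\cc$ an absolute constant, so that $\|M_0-M\|\le\tfrac\delta2\|M\|$ with probability $\ge1-\phi/4$. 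By the spectral shattering estimates for Ginibre perturbations from Parts~I and~II \cite{banks2021global,banks2022II}, with probability $\ge1-\phi/4$ the matrix $M_0$ is diagonalizable with eigenvector condition number $\cond(M_0)\le\mathrm{poly}(n/\delta\phi)$ and $\gap(\Spec{M_0})\ge\|M\|/\mathrm{poly}(n/\delta\phi)$. Since every accuracy, iteration count and precision quantity appearing below is polylogarithmic in $\cond(M_0)$, $1/\gap(\Spec{M_0})$, $n$, $1/\delta$ and $1/\phi$, hence $O(\log(n/\delta\phi))$, it suffices from here on to output the exact eigenvalues of \emph{some} matrix within $\tfrac\delta2\|M\|$ of this fixed, well-separated, well-conditioned $M_0$. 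Finally reduce $M_0$ to upper Hessenberg form $H$ with $\hform$ ($O(n^3)$ flops, taking each reflector to be exactly unitary), so that $H$ is unitarily similar to $M_0+E_0$ with $\|E_0\|=2^{-\Omega(\log^2(n/\delta\phi))}\|M\|$.

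\textbf{Step 2 (deflation loop).} For $j=n,n-1,\dots,1$ let $H_j$ be the current active $j\times j$ upper Hessenberg block. Call $\oneig(H_j)$ (Step~3) to obtain $\mu_j\in\bC$ and a unit $v_j$ with $\ds{\mu_j}{H_j}\le\delta\|M\|/n^3$ and $\|(H_j-\mu_j)v_j\|\le\delta\|M\|/n^3$; record $\mu_j$; then $\deflate$: since $\hous{v_j}$ carries $e_1$ to $v_j$, the first column of $\hous{v_j}^*H_j\hous{v_j}$ equals $\mu_j e_1$ plus a vector of norm $\le\|(H_j-\mu_j)v_j\|\le\delta\|M\|/n^3$, so a $1\times1$ corner splits off and we re-Hessenbergize the trailing $(j-1)\times(j-1)$ block with $\rhform$ ($O(j^3)$ flops). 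All transformations in Steps~1--2 are unitary, so $\cond$ never degrades along the loop; conjugating, by the accumulated unitary, the triangular matrix built from the recorded $\mu_j$'s (with its tiny strictly-lower-triangular part zeroed out) produces a matrix $\ax M$ with $\Spec{\ax M}=\{\mu_1,\dots,\mu_n\}$ and $\|M-\ax M\|\le\tfrac\delta2\|M\|+n\cdot(\delta\|M\|/n^3)+2^{-\Omega(\log^2(n/\delta\phi))}\mathrm{poly}(n)\,\|M\|\le\delta\|M\|$, as required. The total arithmetic cost is $\sum_{j\le n}\big(j^3+\text{cost of }\oneig(H_j)\big)$.

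\textbf{Step 3 ($\oneig$).} On $H_j$, start from a square $S\supseteq\Spec{H_j}$ of side $O(\|M\|)$ and quarter it repeatedly. At a square of centre $c$ and side $\ell$, run $\dispec$, which performs $O(\log(n/\delta\phi))$ steps of inverse iteration for $\smin(H_j-c)$ on the Hermitian dilation of $H_j-c$, to estimate $\smin(H_j-c)$ to within a constant factor; because $\smin(H_j-c)\le\ds{c}{H_j}\le\cond(H_j)\,\smin(H_j-c)$, if the estimate exceeds $\sqrt2\,\ell$ then the square contains no eigenvalue of $H_j$ and is discarded, and otherwise we recurse into it. After $D=O(\log(n/\delta\phi))$ quarterings one has $\ell<\gap(\Spec{H_j})/(8\cond(H_j))$, so a surviving square has $\ds{c}{H_j}<\gap(\Spec{H_j})/4$: its centre $\mu^{(0)}$ lies within $\gap/4$ of a \emph{unique} eigenvalue $\lambda$ of $H_j$ and at distance $>3\gap/4$ from every other one. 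Now run inverse iteration with the \emph{fixed} shift $\mu^{(0)}$ from a uniformly random unit starting vector; with probability $\ge1-\phi/(4n)$ the start has overlap $\ge1/\mathrm{poly}(n/\delta\phi)$ with each eigenvector of $H_j$, and since $(H_j-\mu^{(0)})^{-1}$ amplifies the $\lambda$-eigencomponent by a factor at least $3$ more than any other, after $O(\log(n/\delta\phi))$ steps the iterate approximates the unit $\lambda$-eigenvector to within $\delta\|M\|/n^3$ (using that the eigenbasis of $H_j$ has condition number $\cond(H_j)$); take $v_j$ to be this iterate and $\mu_j:=v_j^*H_jv_j$. Each inverse step is a backward-stable Hessenberg solve ($O(j^2)$ flops), each of the $D$ subdivision levels makes $O(1)$ calls to $\dispec$, and $\dispec$ itself costs $O(\log(n/\delta\phi))$ Hessenberg solves, so $\oneig(H_j)=O\big(j^2\log^2(n/\delta\phi)\big)$ flops and $\sum_{j\le n}\big(j^3+j^2\log^2(n/\delta\phi)\big)=O\big(n^4+n^3\log^2(n/\delta\phi)\big)$.

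\textbf{Finite precision, probability, and the main obstacle.} Every solve, Householder construction and matrix product is backward stable, so at $b$ bits of precision each computed object is the exact object for a matrix perturbed by $2^{-\Omega(b)}\mathrm{poly}(n)\|M\|$; the binding requirement is that a chain of $O(\log(n/\delta\phi))$ near-singular solves inside $\oneig$ — whose condition numbers are $\mathrm{poly}(n/\delta\phi)$, so each sheds $O(\log(n/\delta\phi))$ bits — still leaves accuracy $\delta\|M\|/n^3$, which is guaranteed by $b=O(\log^2(n/\delta\phi))$ bits, exactly as in Parts~I--II. Union bounding the norm estimate, the shattering event, and the $n$ random starts gives total failure probability $\le\phi$. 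Adding to the $O\big(n^4+n^3\log^2(n/\delta\phi)\big)$ above the $n$-independent cost of the high-precision scalar arithmetic in the setup, $O\big(\log^2(n/\delta\phi)\log\log(n/\delta\phi)\big)$, yields the stated bound. The hardest part is Step~3: making the subdivision test sound — the factor-$\cond(H_j)$ gap between $\smin(H_j-c)$ and $\ds{c}{H_j}$ must be absorbed into the shattering estimates so that a surviving terminal square really isolates one eigenvalue — and then proving that fixed-shift inverse iteration from that shift contracts geometrically at a rate controlled solely by $\cond(H_j)$ and the isolation ratio, robustly to the backward errors of the finite-precision solves: the nonnormal, inexact analogue of the textbook inverse-iteration convergence argument.
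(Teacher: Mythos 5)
Your outer structure (Ginibre preconditioning, Hessenberg form, deflate one eigenvalue at a time, re-Hessenbergize, $O(n^4)$ from the repeated reductions) matches the paper, but your inner routine $\oneig$ is a genuinely different algorithm — a quadtree spectral-bisection search driven by $\sigma_{\min}$ estimates on the Hermitian dilation — and that is exactly where the paper's technical work lives. This replacement has two unresolved gaps, the first of which you flag yourself but do not close. First, your test primitive estimates $\sigma_{\min}(H_j-c)$, which only lower-bounds $\ds{c}{H_j}$ with slack $\kappa_V(H_j)$. This means a surviving box need not contain an eigenvalue, only pseudospectrum, and more importantly a quartered box can have several surviving children, so your budget of ``$O(1)$ calls to $\dispec$ per level'' tacitly assumes a single-branch descent; a greedy descent into the child with smallest $\sigma_{\min}$ can dead-end inside a pseudospectral lobe that contains no eigenvalue, and you give no backtracking or box-counting argument (this is precisely the combinatorial issue that \cite{banks2020pseudospectral} had to handle with grid-aligned shattering, and that the paper's design avoids entirely). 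Second, the iteration count you assign $\dispec$ is not justified: inverse iteration on the Hermitian dilation of $H_j-c$ converges to $\sigma_{\min}$ at a rate governed by the singular-value ratio $\sigma_{n-1}(H_j-c)/\sigma_n(H_j-c)$, a quantity that is not controlled by the shattering parameters $\epsilon,\shat$ and can be arbitrarily close to $1$.

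The paper sidesteps both problems by using a different estimator and a different search geometry. Its $\dispec$ returns $\|e_n^*(H-s)^{-m}\|^{-1/m}$, computed via $m$ steps of $\iqr$; by the approximate functional calculus (Lemma \ref{lem:spectral-measure-apx}) this is a \emph{relative} approximation of $\ds{s}{H}$ itself — not of $\sigma_{\min}$ — once $m=\Theta\big(\log(\kappa_V(H)/p)\big)$, with error controlled only by $\kappa_V$ and the minimum mass $p=\min_\lambda\P[Z_H=\lambda]$ (Observation \ref{obs:gooddistapprox}); there is no singular-value-gap dependence, and $p$ is guaranteed polynomially large by the random bottom-up Hessenberg form $\rhform$. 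Then, instead of bisection, $\oneig$ does an annulus-net descent: a relative-error $\dist$ estimate $\tau$ pins an eigenvalue on the annulus $\calA_{s,\tau}$, one of the six equally-spaced points on the circle of radius $\tau$ is within $0.6\tau$ of that eigenvalue (Observation \ref{obs:scalereduction}), so moving to the best of the six gives a monotone $0.66$-contraction with exactly six $\dispec$ calls per round and no branching. If you want to salvage the bisection route you would need to (i) replace the $\sigma_{\min}$ test with a genuine $\dist$ estimator, or else prove a box-count bound for the surviving pseudospectral region, and (ii) bound the power-iteration rate on the dilation, neither of which the shattering hypothesis delivers for free.
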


The above theorem shows that, when implemented on a floating point machine with $O(\log(n/\delta \phi)^2)$ bits of precision, the algorithm $\eig$ is $\delta$-backward stable. However, as mentioned above, the shifting strategy analyzed in \cite{banks2021global, banks2022II} requires an algorithm that provides forward approximations of the eigenvalues of a (small) matrix. The following result  \cite[Theorem 39.1]{bhatia2007perturbation} turns any backward error algorithm for the eigenproblem into a forward error algorithm, at the cost of multiplying  the number of bits of precision by a roughly $n$ (which might be tolerable for small $n$, but prohibitively expensive otherwise).  

\begin{lemma}
    \label{thm:backward-to-forward-eig}
    Let $M, \ax{M} \in \bC^{n\times n}$ be any two matrices. Then there are labellings $\lambda_1,...,\lambda_n$ and $\ax\lambda_1,...,\ax\lambda_n$ of the eigenvalues of $M$ and $\ax M$, respectively, so that
    $$
        \max_i |\lambda_i - \ax\lambda_i| \le 4(\|M\| + \|\ax M\|)^{1 - 1/n}\|M - \ax M\|^{1/n}.
    $$
\end{lemma}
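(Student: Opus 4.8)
The plan is to reach the stated quantity, which is an upper bound for the \emph{optimal matching distance} between $\Spec M$ and $\Spec{\ax M}$, in two stages: first establish the weaker \emph{one-sided} (Elsner-type) bound, and then upgrade it to a genuine bijection. For the first stage, fix an eigenvalue $\lambda$ of $M$ with unit eigenvector $v$; from $\ax M v = \lambda v - (M-\ax M)v$ we get $\|(\lambda I - \ax M)v\| \le \|M-\ax M\|$. Completing $v$ to an orthonormal basis and applying Hadamard's inequality to the columns of $\lambda I - \ax M$ in that basis yields
\begin{equation*}
\prod_{i}|\lambda - \ax\lambda_i| = \bigl|\det(\lambda I - \ax M)\bigr| \le \|(\lambda I - \ax M)v\|\cdot\|\lambda I - \ax M\|^{\,n-1} \le \|M-\ax M\|\,(\|M\|+\|\ax M\|)^{\,n-1},
\end{equation*}
using $|\lambda|\le\|M\|$. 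Taking $n$-th roots bounds $\ds{\lambda}{\ax M}$ by $\epsilon := (\|M\|+\|\ax M\|)^{1-1/n}\|M-\ax M\|^{1/n}$; maximizing over $\lambda$, and repeating with $M$ and $\ax M$ interchanged, shows that the two spectra have Hausdorff distance at most $\epsilon$.

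Second, I would upgrade this to a bijection by a continuity argument along the segment $M_t := (1-t)M + t\ax M$, $t\in[0,1]$. Since $\|M_t\|\le\max(\|M\|,\|\ax M\|)$ and $\|M_t - M\|\le\|M-\ax M\|$, applying the first stage to the pair $(M, M_t)$ shows every eigenvalue of $M_t$ lies within $2\epsilon$ of $\Spec M$, uniformly in $t$; hence $\Spec{M_t}$ stays inside the fixed compact set $\Omega := \bigcup_{\lambda\in\Spec M}\overline{D}(\lambda,2\epsilon)$ for all $t$. Thickening each connected component of $\Omega$ slightly into an open set whose boundary misses $\Omega$ (so it misses every $\Spec{M_t}$), the number of eigenvalues of $M_t$ in each such set, counted with multiplicity, is an integer-valued continuous function of $t$, hence constant; it equals the number of eigenvalues of $M$ in that component at $t=0$ and of $\ax M$ at $t=1$. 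Thus each component of $\Omega$ contains equally many eigenvalues of $M$ and of $\ax M$, and pairing them arbitrarily inside each component gives a bijection between the two spectra.

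Finally one must bound the distance of this matching, and this is the only genuinely delicate point: the one-sided bound and the localization are soft, but the optimal matching distance is \emph{not} controlled by a Hausdorff-type bound alone, since multiplicities of a cluster of nearby eigenvalues can be shuffled. A connected union of at most $n$ disks of radius $2\epsilon$ has diameter at most $4n\epsilon$, so the matching produced above already has distance $O(n\epsilon)$, which is all our applications actually require up to the value of the constant. To obtain the stated dimension-free constant one must analyze how the multiplicities of clustered eigenvalues of $M_t$ redistribute as $t$ traverses $[0,1]$ — equivalently, a careful matching \emph{within} each cluster — which is precisely the content of \cite[Theorem~39.1]{bhatia2007perturbation}; I would invoke that theorem for this last refinement rather than reproduce its bookkeeping.
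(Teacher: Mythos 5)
The paper does not prove this lemma; it imports it verbatim as \cite[Theorem 39.1]{bhatia2007perturbation}, so there is no ``paper's own proof'' to measure you against. That said, what you sketch is the standard route and it is carried out correctly as far as it goes: the Hadamard/Elsner computation giving the one-sided Hausdorff bound $\mathrm{dist}(\lambda,\Spec{\ax M}) \le \epsilon := (\|M\|+\|\ax M\|)^{1-1/n}\|M-\ax M\|^{1/n}$ is right, and the homotopy argument along $M_t = (1-t)M + t\ax M$ correctly localizes $\Spec{M_t}$ inside the $2\epsilon$-neighborhood of $\Spec M$ (note $\|M\|+\|M_t\| \le 2(\|M\|+\|\ax M\|)$, which is why you pick up the factor $2$), and correctly concludes that each connected component of that neighborhood holds equal numbers of eigenvalues of $M$ and $\ax M$ counted with multiplicity.

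You are also right to flag that this only yields a matching of distance $O(n\epsilon)$ — a connected chain of $n$ disks of radius $2\epsilon$ can have diameter up to $4n\epsilon$ — and that the dimension-free constant $4$ is precisely what Bhatia's Theorem 39.1 adds by analyzing how multiplicities redistribute within a cluster as $t$ varies. Since you ultimately defer to that reference for this last step, your proposal is in effect the same citation the paper makes, supplemented with a correct exposition of the first two stages. This is fine and self-aware; just be clear in your own writing that invoking the theorem makes the earlier sketch redundant unless you actually need the weaker $O(n)\epsilon$ form, which would cost you a factor of $n$ in the exponent when you set $\delta = (\beta/12)^n$ in the corollary and would worsen the $O(n^2\log^2(\cdot))$ precision figure to $O(n^2\log^2(n\cdot))$ — tolerable, but worth stating explicitly if you want to avoid the black-box citation.
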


 In particular, for every $\forward \le 1$ one can produce $\forward$-forward approximate eigenvalues by calling $\smalleig$ with accuracy
$$
    \acc = \left( \frac{\beta}{12} \right)^n,
$$
as $\|\ax M\| \le \|M\| + \delta \le 2\|M\|$. This yields the following corollary. 

\begin{corollary}
On any input matrix $M\in \bC^{n\times n}$ with eigenvalues $\lambda_1, \dots, \lambda_n$,  any accuracy parameter $\beta >0$, and failure probability tolerance $\phi>0$, one can use $\eig$ to find, with probability $1-\phi$, approximate eigenvalues $\ax{\lambda}_1, \dots, \ax \lambda_n\in \bC^{n\times n}$ such that
$$\max_{i} |\lambda_i-\ax{\lambda}_i| \leq \beta \|M\|,$$
using at most 
$$O\big(n^5 \log (n/\beta\phi)^2 +n^2\log(n/\beta\phi)^2 \log (n \log(n/\beta\phi))    \big)$$
arithmetic operations on a floating point machine with $O(n^2 \log(n/\beta \phi)^2)$ bits of precision. 
\end{corollary}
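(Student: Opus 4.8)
The plan is to derive the Corollary as a direct consequence of \prettyref{thm:main} and \prettyref{thm:backward-to-forward-eig}, by instantiating the backward-error parameter $\delta$ small enough that the backward-to-forward conversion yields the desired forward accuracy $\beta$, and then tracking how this choice propagates through the complexity and precision bounds. First I would set $\delta = (\beta/12)^n$ as indicated in the text just above the statement. Running $\eig(M,\delta,\phi)$ then produces, with probability $1-\phi$, the exact eigenvalues of some $\ax M$ with $\|M-\ax M\|\le \delta\|M\|$; by \prettyref{thm:backward-to-forward-eig}, and using $\|\ax M\|\le \|M\|+\delta\|M\|\le 2\|M\|$ (since $\delta\le 1$), there is a labelling with
$$
  \max_i|\lambda_i-\ax\lambda_i| \le 4(\|M\|+\|\ax M\|)^{1-1/n}\|M-\ax M\|^{1/n} \le 4(3\|M\|)^{1-1/n}(\delta\|M\|)^{1/n} = 4\cdot 3^{1-1/n}\,\delta^{1/n}\,\|M\| \le 12\,\delta^{1/n}\|M\|,
$$
and with $\delta^{1/n} = \beta/12$ this is exactly $\beta\|M\|$, as required. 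So the correctness half of the Corollary is immediate.

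Next I would substitute $\delta = (\beta/12)^n$ into the complexity and precision bounds of \prettyref{thm:main}. The key observation is that
$$
  \log(n/\delta\phi) = \log\!\big(12^n n/(\beta^n \phi)\big) = n\log(12/\beta) + \log(n/\phi) = O\big(n\log(n/\beta\phi)\big),
$$
so $\log(n/\delta\phi)^2 = O\big(n^2\log(n/\beta\phi)^2\big)$. Plugging this into the arithmetic-operation count of \prettyref{thm:main} gives
$$
  n^4 + n^3\cdot n^2\log(n/\beta\phi)^2 + n^2\log(n/\beta\phi)^2\cdot \log\!\big(n^2\log(n/\beta\phi)^2\big),
$$
and since the $n^5\log(n/\beta\phi)^2$ term dominates $n^4$, while $\log(n^2\log(n/\beta\phi)^2) = O(\log(n\log(n/\beta\phi))) = O(\log n + \log\log(n/\beta\phi))$, this simplifies to
$$
  O\big(n^5\log(n/\beta\phi)^2 + n^2\log(n/\beta\phi)^2\,\log(n\log(n/\beta\phi))\big),
$$
matching the claimed bound. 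Likewise the precision requirement $O(\log(n/\delta\phi)^2)$ bits becomes $O(n^2\log(n/\beta\phi)^2)$ bits.

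There is no real obstacle here — the argument is a routine substitution — but the one place that requires a little care is the bookkeeping inside the logarithms: one must verify that no term of the form $\log\log(n/\delta\phi)$ blows up (it becomes $\log\log((12/\beta)^n n/\phi) = O(\log(n\log(n/\beta\phi)))$, which is exactly the $\log(n\log(n/\beta\phi))$ factor appearing in the stated bound) and that the $n^4$ term is genuinely absorbed by $n^5\log(n/\beta\phi)^2$ (true since $\log(n/\beta\phi)\ge 1$ without loss of generality, or after a harmless normalization $\beta\le 1$, $\phi\le 1$). I would also note explicitly that the hypothesis $\beta\le 1$ from the discussion preceding the Corollary is what guarantees $\delta\le 1$, which is needed both for the bound $\|\ax M\|\le 2\|M\|$ and for $\eig$ to be called with a valid accuracy parameter. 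Assembling these observations gives the Corollary.
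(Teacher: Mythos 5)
Your proof is correct and takes exactly the route the paper itself indicates (the paper only sketches the argument, stating that setting $\delta = (\beta/12)^n$ and invoking Lemma \ref{thm:backward-to-forward-eig} yields the corollary). You correctly fill in the constant-tracking in the forward-error bound and the substitution $\log(n/\delta\phi)=O(n\log(n/\beta\phi))$ in the complexity and precision bounds, including the $\log\log$ term, and your observation that $\|\ax M\|\le\|M\|+\delta\|M\|$ (rather than the paper's $\|M\|+\delta$) is the right reading of that step.
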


\subsection{Overview of the Algorithm and Intermediate Results}
\label{sec:introoverview}

The main subroutine of $\eig$, which we call $\oneig$, is a form of \emph{shifted} inverse iteration that on a diagonalizable input $M\in \bC^{n\times n}$ and  an input accuracy parameter $\beta \geq 0$,   produces a $\beta$-forward approximation $\ax{\lambda}\in \bC$ of an eigenvalue of $M$. The precision required to ensure stability of this subroutine and its running time are a function of $n$ and the eigenvector condition number of $M$, i.e. of
$$\kappa_V(M) : = \inf_{V :  M = V D V^{-1}} \|V\| \|V^{-1}\|.  $$ 
The shifting strategy in $\oneig$ crucially relies on a subroutine $\dispec$,  which allows us to estimate the distance of any given point $s\in \bC$ to the spectrum of $M$ (henceforth denoted by $\Spec{M}$) up to relative distance 0.1.  The subroutine $\dispec$ is in itself a form of \emph{unshifted} inverse iteration on $M-s$ and its required precision and running time are also a function of $n$ and $\kappa_V(M)$. 

Once a $\beta$-forward approximation $\ax{\lambda}\in \bC$ of $M$ is obtained, the algorithm calls a subroutine $\decouple$, which essentially uses inverse iteration on $M-\ax{\lambda}$ to find a vector $v\in \bC^n$ which is close to the right eigenvector of $M$ associated to the eigenvalue which is closest to $\ax{\lambda}$. Then, the subroutine $\deflate$ is called to  reduce the problem $M$ to a smaller instance. 

All of the subroutines used in the algorithm require some control on $\kappa_V(M)$, and some additionally require a lower bound on the minimum eigenvalue gap of $M$, i.e.
$$\gap(M):= \min_{i\neq j} |\lambda_i(M)-\lambda_j(M)|.$$
In order for $\eig$ to work on any matrix, we pre-process the input matrix by adding a small random perturbation.\footnote{If $M\in \bC^{n\times n}$ is the input matrix, we run the algorithm on $M+ \gamma G_n$, where $G_n$ is a normalized complex Ginibre matrix and $\gamma>0$ is a function of the desired accuracy and failure probability.   } This was done in \cite{banks2020pseudospectral} to provide general guarantees for the spectral bisection algorithm, and by now  the random matrix literature possesses several results giving high probability quantitative upper bounds on $\kappa_V$  and lower bounds on $\gap$ for the pre-processed matrix \cite{armentano2018stable, banks2021gaussian, banks2020pseudospectral, jain2021real, banks2020overlaps, ge2017eigenvalue, luh2021eigenvectors}. We refer the reader to Section \ref{sec:shatandain} for a detailed discussion. 

Below we elaborate on the main subroutines of $\eig$ and discuss the  technical results proven in this paper. 
\paragraph{Computing the Distance to the Spectrum ($\dispec$).} Let $M\in \bC^{n\times n}$ be a diagonalizable matrix with spectral decomposition
$$M = \sum_{i=1}^n \lambda_i v_i w_i^*,$$
 and fix $s\in \bC\setminus \Spec{M}$.  The main idea behind $\dispec$ is simple: if $u\in \bC^n$ is a vector sampled uniformly at random from the complex unit sphere $\bS^{n-1}$ then  $\|u^* (s-M)^{-m}\|^{-\frac{1}{m}}$ converges (with probability one) as $m$ goes to infinity, to the distance from $s$ to the spectrum of $M$, which we will denote by $\ds{s}{M}$. Indeed: 
 \begin{align}
 \lim_{m\to \infty} \|u^* (s-M)^{-m}\|^{-\frac{1}{m}} & = \lim_{m\to \infty}\Big\|\sum_{i=1}^n (s-\lambda_i)^{-m} u^*v_i w_i^*\Big\|^{-\frac{1}{m}}  \nonumber 
\\ & = \lim_{m\to \infty} \ds{s}{M} \Big\|\sum_{i=1}^n \left(\frac{\ds{s}{M}}{s-\lambda_i}\right)^{m} u^*v_i w_i^*\Big\|^{-\frac{1}{m}} \nonumber    \\ \label{eq:convergencedispec} & = \ds{s}{M}     
 \end{align}
where the last equality holds almost surely. In Section \ref{sec:oneig} we will prove a quantitative version of this fact, and show that when $m = \Omega\big(\log (n \kappa_V(M))\big) $ one obtains an approximation of $\ds{s}{M}$ up to a relative error of 0.1. We will then conclude that $\dispec$ can be implemented with a running time of at most
$$O(\log(n \kappa_V(M))n^2+ \log(n \kappa_V(M))\log \log(n \kappa_V(M)) )$$
arithmetic operations and prove its backward error guarantees, which depend on $\ds{s}{M}$ and $\kappa_V(M)$.  

\paragraph{Finding One Eigenvalue ($\oneig$).} With $\dispec$ in hand, $\oneig$  generates a sequence of complex numbers $s_0, s_1, \dots $ that converges linearly to an eigenvalue of $M$. This sequence is recursively generated as follows: at time $t$, the algorithm uses $\dispec$ to compute an estimate $\dd_t \approx \dist(s_t, \Spec{M})$ with relative error of at most $0.1$. This guarantees that there is at least one eigenvalue of $M$ inside the annulus
\begin{equation}
\label{eq:annulusi}
\calA_{s_t, \dd_t} := \{ z \in \bC : 0.9  \dd_t\leq  |z-s_t|  \leq 1.12  \dd_t \},
\end{equation}
and hence if $\calN_{s_t, \dd_t}$ is a fine enough net of $\calA_{s_t, \dd_t}$ (we will show that nets of six points suffice), we will be able to guarantee that
$$\min_{s\in \calN_{s_t, \dd_t}} \dist(s, \Spec{\hess}) \leq 0.6 \, \dist(s_t, \Spec{\hess}).$$
Given the above guarantee, $\oneig$ then uses $\dispec$ again, now to estimate the distances of the points $s\in \calN_{s_t, \tau_t}$ to the spectrum of $M$, and chooses a point $s\in\calN$ for which
$$\dispec(s, \Spec{M}) \leq \gamma \tau_t$$
for some suitably chosen parameter $\gamma\in (0, 1)$ (we will show that when $\gamma=0.66$ the above inequality is guaranteed for some point in the net). For such an $s$, $\oneig$ sets $s_{t+1}:= s$ and $\tau_{t+1}:= \dispec(s, \Spec{M})$, after which the iteration is repeated (see Figure \ref{fig:net} for an example). 

 \begin{figure}[h]
  \centering
\includegraphics[scale=.22]{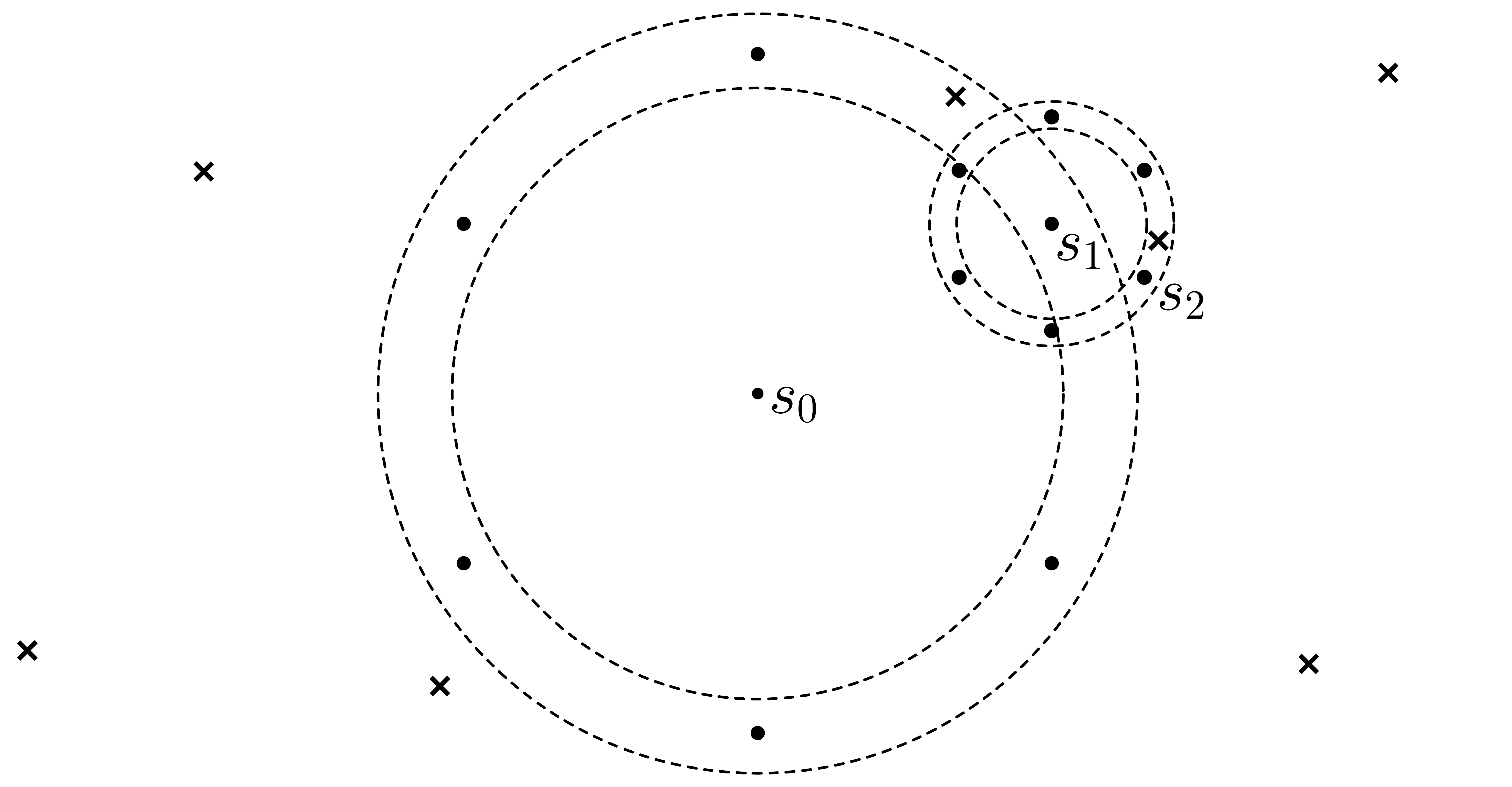}
\caption{The locations of the eigenvalues of $M$ are represented by an $\times$. The figure illustrates the first steps of the iteration which produce $s_0, s_1$ and $s_2$. The annuli $\calA_{s_0, \tau_0}$ and $\calA_{s_1, \tau_1}$ are signal with dotted lines, and the corresponding nets of six points on them are marked.   }
\label{fig:net}
\end{figure}

Clearly, the $s_t$ will converge linearly to an eigenvalue of $M$  and hence finding a point that is at distance at most $\beta$ from the spectrum of $M$ will take $O(\log(1/\beta))$ calls to $\dispec$.  This will be discussed in detail in Section \ref{sec:oneig}. 

\begin{remark}
Intuitively, $\oneig$ is a shifting strategy for inverse iteration where each shift is an \emph{exceptional shift} (cf. \cite{eberlein1975global, wang2002convergence, banks2021global}) chosen from a net of six points. 
\end{remark}

\begin{remark}
Note that even if the subroutine $\oneig$ provides a $\beta$-\emph{forward} approximation of an eigenvalue of the matrix, the ultimate algorithm $\eig$ will only be able to provide an $O(\beta)$-\emph{backward} set of approximate eigenvalues. This is because in order to obtain the full eigendecomposition one needs to \emph{deflate} the problem once a converged eigenvalue is obtained (see the next paragraph for more details on this process), and after deflation we are only able to control the backward error of the eigenvalues that are  subsequently obtained. 
\end{remark}

\paragraph{Implementation of the Subroutines ($ \comptau{m}, \decouple, \deflate$).} There are many ways to implement the subroutines $\dispec$ and $\oneig$ described above. In this paper, for several reasons, we have decided to operate with matrices in their Hessenberg form (similar to what the shifted QR algorithm does). One of the advantages of doing this is that, in the Hessenberg setting, instead of computing the quantity $\|u^* (s-M)^{-m}\|^{-\frac{1}{m}}$ mentioned in the analysis of $\dispec$ one  need  to  compute
$$\tau_{(z-s)^m}(H):= \|e_n^*(s-H)^{-m} \|^{-\frac{1}{m}}$$
where $H$ is a Hessenberg matrix that is unitarily equivalent to $M$ (or \emph{almost} unitarily equivalent when finite arithmetic is taken into account). Computing the latter quantity, as shown in e.g. \cite{banks2022II}, can be done directly from running the implicit QR algorithm $\iqr$ on $H$ (see Section \ref{sec:imported} for a definition of $\iqr$ and the subroutine $\comptau{m}$ defined by it). So in essence, when working with Hessenberg matrices the subroutine $\dispec$ can be easily implemented by calling $\iqr$ with a suitable degree. 

The second advantage of working with a  Hessenberg matrix $H$ is that once a forward approximate eigenvalue $\ax{\lambda}$ of $H$ is found (which is the purpose of $\oneig$), reducing the problem $H$ to a smaller instance becomes easier. Indeed, in Section \ref{sec:decoupling} we will show that if $H_\ell:= \iqr(H, (z-\ax{\lambda})^\ell)$, then one is guaranteed to have $|(H_\ell)_{n, n-1}| = O(\beta)$ for some $\ell = O(n \log \kappa_V(M))$. This will allow us to decouple and then deflate the problem.  

\begin{remark}[Comparison to Shifted QR]
One reason why our algorithm is not an actual shifted QR algorithm is that we have chosen to \emph{maintain} the same Hessenberg matrix $H$ throughout the computation of the shifts $s_1, s_2, \dots$ done by $\oneig$, as opposed to  \emph{updating} the Hessenberg matrix in each iteration to produce a sequence of Hessenberg matrices $H_0=H, H_1, \dots$ hand in hand with the computation of each $s_t$ (as a standard shifted QR algorithm would do). During this process we are using the Hessenberg structure merely as a device for a fast implementation of inverse iteration, and not any of its more subtle properties as in \cite{banks2021global}. Between calls to $\oneig$ the Hessenberg structure is further used to deflate the matrix in a convenient manner.

More substantially, $\oneig$  requires as input a Hessenberg matrix whose right eigenvectors all have {\em reasonably large} (say $1/\poly(n)$) inner products with the vector $e_n$; this is roughly because our analysis is based on the power method and not the more sophisticated potential-based arguments of \cite{banks2021global} which require no assumptions whatsoever. We guarantee the inner product condition by computing a Hessenberg form with respect to a {\em random} vector. Unfortunately  this must be redone after each deflation, which inflicts a cost in the running time of $O(n^4)$, as opposed to the $O(n^3)$ achieved by algorithms that do not need to repeatedly recompute the Hessenberg form.
\end{remark}

\paragraph{Randomness in the Algorithm ($\rhform, \unif(D(0, \eta_2)), G_n$).} Our algorithm uses randomness in three different ways. The first one is related to the inverse iteration described above when discussing $\dispec$. In the Hessenberg setting, the equivalent of running inverse iteration on a randomly chosen vector is to compute a \emph{random} Hessenberg matrix $H$ that is unitarily equivalent to the initial matrix $M$, where the randomness is uniform (in some suitable sense) among the set of Hessenberg matrices that are uniformly equivalent to $M$. The source of randomness in this case is also  a unit vector distributed uniformly on the complex unit sphere $\bS_{\bC}^{n-1}$. We refer the reader to Section \ref{sec:randsampling} for the details  on the sampling assumptions made in this paper, and to Section  \ref{sec:rhess} for an analysis of the subroutine $\rhform$ which on an input matrix $M$ returns a Hessenberg matrix $H$ chosen at random from  the unitary equivalence class (up to machine error) of $M$. 

The second use of randomness is related to the forward stability of $\iqr(H, (z-s)^m)$, which as discussed in \cite{banks2022II}, is a function of $\ds{s}{H}$. As in \cite{banks2022II}, before every call to $\iqr$ we will  add a small random perturbation to the desired shift $s$, i.e. we define $\chs := s+w$ with $w$ chosen uniformly at random from the disk centered at zero of radius $\pretolr$ --- henceforth denoted by $w\sim \unif(D(0, \pretolr))$ --- and run $\iqr(H, (z-\chs)^m)$ instead of $\iqr(H, (z-s)^m)$. The point of doing this is to ensure that with high probability $\ds{\chs}{H}\geq \tol$, for some appropriately chosen (as a function of the desired probability) tolerance parameter $\tol$ that will ultimately determine the precision required for $\iqr$ to be numerically forward stable, a necessary condition for our running time guarantees on $\eig$ to hold.   

Finally, the third way in which we use randomness is to randomly perturb the matrix that is given as input to $\eig$, with the purpose of having  high probability upper and lower bounds on $\kappa_V$ and $\gap$ (cf. \cite[Remark 1.4]{banks2021global} and \cite{banks2020pseudospectral}) when running the subroutines of $\eig$. For this we assume access to a Gaussian sampler that allows us to generate (once) an $n\times n$ complex Ginibre matrix $G_n$. 
\bigskip

To conclude this section we make some comments about our analysis and presentation. 

\paragraph{Pseudospectrum vs $\gap$ and $\kappa_V$.} Although all of the requirements, actions, and guarantees of the subroutines used by the main algorithm can be phrased in terms of the minimum eigenvalue gap and eigenvector condition number of the matrices in question, in some cases we have decided to instead work with the notion of pseudospectrum. This treatment simplifies the analysis of the effects of roundoff error, since the perturbation theory for the pseudospectrum of a matrix is significantly simpler than that for the eigenvalue gap and eigenvector condition number. In Section \ref{sec:pseudospectrum} we include all the necessary preliminaries regarding the notion of pseudospectrum, and explain in what sense the eigenvector condition number and minimum eigenvalue gap of a matrix can be encoded (conversely recovered) in  the pseudospectrum.  

\paragraph{Use of Global Data.} As in \cite{banks2022II} we will use the notion of \emph{global data} when presenting the pseudocode of the algorithms. Here, the global data will be composed of four quantities that all of the subroutines can access if needed. More specifically, the global data will be given by $n$ the dimension of the original input matrix, $\scale$ an approximation of the norm of the  matrix, and  two parameters $\epsilon$ and $\shat$ which will be used to control the pseudospectrum.  

\subsection{Related Work and Discussion}
\label{sec:relatedwork}

Inverse iteration has been used since the 1940's \cite{wielandt1944iterationsverfahren} as a method for computing an eigenvector when an approximation of the corresponding eigenvalue is known; a detailed survey of its history and properties may be found in \cite{varah1968calculation,peters1971calculation,peters1979inverse,ipsen1997computing}. In contrast, this paper uses inverse iteration along with a simple shifting strategy to find the eigenvalues from scratch. 

As discussed in the references above, two situations in which the behavior of inverse iteration in finite arithmetic is known to be tricky to analyze are: (1) matrices with tiny eigenvalue gaps (2) nonnormal matrices which exhibit transient behavior. We deal with these issues by assuming {\em a priori} bounds on the eigenvalue gaps and nonnormality of our input matrix (see Definition \ref{def:shat}) and always dealing with high enough powers of the inverse to dampen transient effects. Assuming such bounds is not restrictive because they may be guaranteed with high probability by adding a small random perturbation, as discussed above.

The algorithm in this paper is, at the time of writing, one of four known provable algorithms for computing backward approximations of the eigenvalues of an arbitrary complex matrix in floating point arithmetic, along with \cite{armentano2018stable,banks2020pseudospectral,banks2022II}. The strengths of the algorithm are its simplicity and use of $O(\log^2(n/\delta))$ bits of precision, which is better than \cite{banks2020pseudospectral} but worse than \cite{armentano2018stable} (however $\cite{armentano2018stable}$ has the drawback of running in $O(n^{10}/\delta)$ arithmetic operations). The main weakness of this algorithm compared to  \cite{banks2020pseudospectral, banks2022II} is its use of $O(n^4)$ arithmetic operations for repeatedly computing the Hessenberg form.
  We do not know any example where this recomputation after deflation is actually needed, but are not able to prove that it is not (with high probability). Doing so would entirely remove the $O(n^4)$ factor from the running time in Theorem \ref{thm:main} and is worthy of further investigation.

\section{Preliminaries}

\label{sec:preliminaries}

As in the previous two papers in this sequence, all vector/matrix norms are $\ell_2$/operator norms unless stated otherwise,  and we use the notation 
$$\dist(\calS_1, \calS_2) := \inf_{s_1 \in \calS_1, s_2\in \calS_2} |s_1-s_2|. $$
for any sets  $\calS_1, \calS_2\subset \bC$, and when $s\in \bC$ we use $\dist(s, \calS)$ as a shorthand notation for $\dist(\{s\}, \calS)$. 

\subsection{Finite Precision Arithmetic}
We use the standard floating point axioms from \cite[Chapter 2]{higham2002accuracy} (ignoring overflow and underflow as is customary), and use $\mach$ to denote the unit roundoff. Specifically, we will assume that we can add, subtract, multiply, and divide floating point numbers, and take square roots of positive floating point numbers, with relative error $\mach$. We will use $\fl(\ast)$ to denote that the expression $\ast$ is computed in finite arithmetic.

As in \cite{banks2022II} we will have to compute $m$-th roots of positive numbers, for which we assume access to an algorithm satisfying the guarantees of the following lemma.  

\begin{lemma}[Lemma 2.1 in \cite{banks2022II}]
\label{lem:mthroot}
There exist small universal constants $\Croot, \croot \geq 1$, such that whenever $m \croot  \mach \leq \epsilon \leq 1/2 $ and for any $a\in \bR^+$, there exists an algorithm that computes $a^{\frac{1}{m}}$ with relative error $\epsilon$ in at most $$T_{\rt}(m,\epsilon):= \Croot m \log(m\log(1/\epsilon))$$ arithmetic operations.
\end{lemma}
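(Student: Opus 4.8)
The plan is to reduce to computing $m$-th roots of numbers in $[1,2)$ and then to run Newton's method, started from a point that is already inside its quadratic-convergence regime. Given the floating-point number $a>0$, read off its mantissa--exponent form $a = m_a\,2^{e}$ with $m_a\in[1,2)$ and $e$ an integer (an $O(1)$ operation), and write $e=qm+r$ with $0\le r<m$ (one division), so that
$$a^{1/m} \;=\; 2^{q}\cdot\bigl(2^{1/m}\bigr)^{r}\cdot m_a^{1/m}.$$
Multiplication by $2^{q}$ is exact (an exponent shift), and $(2^{1/m})^{r}$ is formed from $2^{1/m}$ by $r-1<m$ successive multiplications --- $O(m)$ operations, relative error $O(m\mach)$. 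Thus it suffices to compute $c^{1/m}$ to relative error $\epsilon$ for the two inputs $c\in\{2,m_a\}\subseteq[1,2]$; the case $c=1$ (i.e.\ $a$ a power of two) is trivial.

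For $c\in(1,2]$ put $x_0:=1+(c-1)/m$ and iterate Newton's method for $x^m-c=0$,
$$x_{k+1}\;:=\;\frac1m\Bigl((m-1)\,x_k+\frac{c}{x_k^{\,m-1}}\Bigr),$$
evaluating $x_k^{\,m-1}$ by $m-1$ successive multiplications ($O(m)$ operations per step). Since $\ln c\le c-1$ and $t\mapsto t\ln(1+(c-1)/t)$ increases from $\ln c$ (at $t=1$) toward $c-1$, we get $m\ln(1+(c-1)/m)\ge\ln c$, i.e.\ $x_0\ge b:=c^{1/m}$; as $t\mapsto t^m-c$ is increasing and convex on $(0,\infty)$, the iterates decrease monotonically to $b$. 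Writing $e_k:=x_k/b-1\ge0$, the standard second-order Newton estimate gives $e_{k+1}\le m\,e_k^2$, while the choice of $x_0$ gives $e_0\le(c-1-\ln c)/m\le(1-\ln2)/m$. Hence $f_k:=m\,e_k$ satisfies $f_{k+1}\le f_k^2$ with $f_0\le 1-\ln2<1$, so $m\,e_k\le(1-\ln2)^{2^{k}}$ and relative error $\epsilon$ is reached after $k=O\!\bigl(\log\log(1/(m\epsilon))\bigr)$ steps. Each step costs $O(m)$ operations, so computing $c^{1/m}$ --- and therefore $a^{1/m}$ --- takes $O\!\bigl(m\log\log(1/(m\epsilon))\bigr)+O(m)$ operations, which one checks is at most $\Croot\,m\log(m\log(1/\epsilon))=T_{\rt}(m,\epsilon)$ for a suitable universal constant $\Croot$.

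It remains to redo this in finite precision. Each Newton step incurs relative error $O(m\mach)$ --- dominated by the $m-1$ multiplications forming $x_k^{\,m-1}$, with a further $O(\mach)$ from the division and averaging --- but because Newton's iteration for $x^m-c$ is self-correcting these errors do not compound: the computed iterates track $b$ down to an accuracy floor of order $m\mach$. Consequently, once $m\croot\mach\le\epsilon$ holds (with $\croot$ absorbing the hidden constants and $\epsilon$ shrunk by a fixed factor at the end), the returned value has relative error $O(\epsilon)$. The hypothesis $m\croot\mach\le\epsilon\le\tfrac12$ also keeps every intermediate quantity in $[1,2)$, hence bounded away from $0$ and from overflow, so no cancellation or range problem arises beyond those already counted; the final assembly $\fl\bigl(2^{q}\,(2^{1/m})^{r}\,m_a^{1/m}\bigr)$ adds only $O(m)$ operations and $O(m\mach)$ relative error.

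I expect the main obstacle to be the interaction between the $O(m\mach)$ relative error committed per step in evaluating the large power $x_k^{\,m-1}$ and the Newton contraction: one must show this roundoff does not eject the iterate from the (relative-radius $\Theta(1/m)$) basin of quadratic convergence before the error has been driven below $\epsilon$, and it is exactly this balance that forces the quantitative hypothesis $m\croot\mach\le\epsilon$. A secondary point is to confirm that the explicit warm start $x_0=1+(c-1)/m$ really lands inside that basin uniformly in $c\in[1,2]$ and $m\ge1$ (the elementary computation above), and to route the exponent through $(2^{1/m})^{r}$ rather than $(2^{r})^{1/m}$ so as to avoid overflow when $m$ exceeds the exponent width.
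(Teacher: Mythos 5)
The paper does not prove this lemma: it is imported verbatim (as "Lemma 2.1 in \cite{banks2022II}") from Part II of the series, so there is no in-paper argument for you to be compared against; I am assessing your proposal on its own terms.

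Your exact-arithmetic Newton analysis is sound and in fact slightly sharper than the stated bound: the warm start $x_0 = 1 + (c-1)/m$ gives $m e_0 \le 1-\ln 2 < 1$, the contraction $e_{k+1} \lesssim m e_k^2$ holds on $[1, 1+(1-\ln 2)/m]$ (use $t^{m-2}\le e^{1-\ln 2} < 2$ to keep the constant in check), and you correctly get $O\!\left(\log\log \tfrac{1}{m\epsilon}\right)$ iterations at $O(m)$ operations each. Your worry about roundoff ejecting iterates from the basin is also not in fact a problem: iterations are only needed when $m\epsilon < 1 - \ln 2$, and then $m\cqr\mach \le \epsilon$ forces $m^2 \mach < (1-\ln 2)/\cqr \ll 1$, so the $O(m\mach)$ per-step perturbation is far below the basin radius $\Theta(1/m)$.

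The genuine gap is in the exponent assembly $a^{1/m} = 2^q\,(2^{1/m})^r\,m_a^{1/m}$. If $\apx b$ approximates $2^{1/m}$ with relative error $\eta$, then $\apx b^{\,r}$ has relative error $\approx r\eta$, and $r$ can be as large as $m-1$; so to land at final relative error $\epsilon$ you need $2^{1/m}$ to relative accuracy $O(\epsilon/m)$. But your Newton loop has an accuracy floor of $\Theta(m\mach)$ (each step recomputes $x_k^{m-1}$ by $m-2$ multiplications), and the hypothesis $m\croot\mach\le\epsilon$ only gives $m\mach \le \epsilon/\croot$, not $m\mach \le \epsilon/m$. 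Hence for $m$ larger than a constant the routing through $(2^{1/m})^r$ does not meet the accuracy specification; the same issue persists even if $x_k^{m-1}$ is formed by repeated squaring, which lowers the floor only to $\Theta((\log m)\mach)$. You correctly observe that the alternative routing $(2^r)^{1/m}$ risks overflow when $m$ exceeds the exponent width, but that observation is exactly what makes the range-reduction step nontrivial; as written, neither routing closes the argument, so the reduction to $c\in[1,2]$ needs a different mechanism (e.g.\ a chain of square roots to form $2^{r/m}$, or range reduction that keeps the to-be-rooted quantity bounded without a post-hoc $r$-th power).
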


\subsection{Random Sampling Assumptions.} 
\label{sec:randsampling}

In Section \ref{sec:introoverview} we enlisted the three different ways in which randomness is used in $\eig$. Here we specify the assumptions we make about the algorithms used to generate the desired random objects. 

\begin{definition}[Efficient $\unif(\bS^{n-1}_{\bC})$ Sampler]
\label{def:usampler}
An efficient random vector algorithm takes as input a positive integer $n$ and  generates a random unit vector $u\in \bC^n$ distributed uniformly in the complex unit $n$-sphere $\bS^{n-1}$ and runs in $\cu n$ arithmetic operations, for some universal constant $\cu$. 
\end{definition}

\begin{definition}[Efficient $\unif(D(0, R))$ Sampler]
An efficient random perturbation algorithm takes as input an $R>0$, and  generates a random $w\in \bC$ distributed uniformly in the disk $D(0, R)$, and runs in $\cd$ arithmetic operations, for some universal constant $\cd$.   
\end{definition}

\begin{definition}[Efficient Ginibre Sampler]
An efficient Ginibre sampler takes as input a positive integer $n$ and generates a random matrix $G_n\in \bC^{n\times n}$, where the entries of $G_n$ independent centered complex Gaussians of variance $1/n$, and runs in $\Cg n^2$ arithmetic operations.  
\end{definition}

Note that the roundoff error in the algorithm coming from using finite precision when sampling any of these  random objects only affects (in a negligible way) the failure probabilities reported in the analysis of the algorithm, and not the quantities handled by the algorithm itself. So, for simplicity we will assume that the samples can be drawn from their exact distribution.

\subsection{Definitions and Lemmas from \cite{banks2021global} and \cite{banks2022II}.}
\label{sec:imported}

\paragraph{Approximate Functional Calculus.} As in the first two parts of this series, we will exploit the notion of \emph{approximate functional calculus}. For a diagonalizable Hessenberg matrix $H\in \bC^{n\times n}$, with  diagonalization $H=VDV^{-1}$ for $V$ chosen\footnote{If there are multiple such $V$, choose one arbitrarily.} to satisfy $\|V\|= \|V^{-1}\| = \sqrt{\kappa_V(H)}$, define $Z_H$ to be the random  variable supported on $\Spec(H)$ with distribution
\begin{equation}\label{eqn:specmeasure}\P[Z_H = \lambda_i ] = \frac{|e_n^* V e_i|^2}{\|e_n^* V\|^2},\end{equation}
where $\lambda_i= D_{ii}$. As in the prequels, we  will often use the following inequalities (see \cite[Lemma 2.4]{banks2021global} for a proof).

\begin{lemma}[Approximate Functional Calculus]
    \label{lem:spectral-measure-apx}
    For any upper Hessenberg $H$ and complex function $f$ whose domain includes the eigenvalues of $H$,
    $$
        \frac{\|e_n^\ast f(H)\|}{\kappa_V(H)} \le \E\left[|f(Z_H)|^2\right]^{\frac{1}{2}} \le \kappa_V(H)\|e_n^\ast f(H)\|.
    $$
\end{lemma}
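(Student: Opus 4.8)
The plan is to reduce both inequalities to elementary submultiplicativity of the operator norm, after first rewriting the second moment of $Z_H$ as a ratio of two norms. Directly from the definition of the random variable $Z_H$,
\begin{equation*}
\E\big[|f(Z_H)|^2\big] = \sum_{i=1}^n |f(\lambda_i)|^2\,\frac{|e_n^* V e_i|^2}{\|e_n^* V\|^2} = \frac{\|e_n^* V f(D)\|^2}{\|e_n^* V\|^2},
\end{equation*}
where the last equality uses that $f(D)=\mathrm{diag}(f(\lambda_1),\dots,f(\lambda_n))$, so that the $i$-th coordinate of the row vector $e_n^* V f(D)$ is exactly $(e_n^* V e_i) f(\lambda_i)$. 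Hence $\E[|f(Z_H)|^2]^{1/2} = \|e_n^* V f(D)\|/\|e_n^* V\|$, and since $H=VDV^{-1}$ is diagonalizable we also have $f(H)=V f(D)V^{-1}$, which gives the two algebraic identities $e_n^* f(H) = (e_n^* V f(D))V^{-1}$ and $e_n^* f(H) V = e_n^* V f(D)$.

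For the upper bound I would estimate the numerator and denominator of this ratio separately. Submultiplicativity applied to $e_n^* V f(D) = e_n^* f(H) V$ gives $\|e_n^* V f(D)\| \le \|e_n^* f(H)\|\,\|V\|$, while writing $e_n^* = (e_n^* V)V^{-1}$ and using $\|e_n\|=1$ gives $1=\|e_n^*\| \le \|e_n^* V\|\,\|V^{-1}\|$, i.e. $\|e_n^* V\| \ge \|V^{-1}\|^{-1}$. Dividing one by the other yields $\E[|f(Z_H)|^2]^{1/2} \le \|e_n^* f(H)\|\,\|V\|\,\|V^{-1}\|$. For the lower bound I would argue symmetrically: submultiplicativity applied to $e_n^* f(H) = (e_n^* V f(D))V^{-1}$ gives $\|e_n^* f(H)\| \le \|e_n^* V f(D)\|\,\|V^{-1}\|$, and $\|e_n^* V\| \le \|e_n\|\,\|V\| = \|V\|$, so that $\E[|f(Z_H)|^2]^{1/2} = \|e_n^* V f(D)\|/\|e_n^* V\| \ge \|e_n^* f(H)\|/(\|V\|\,\|V^{-1}\|)$. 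In both cases I would conclude by invoking the normalization $\|V\|=\|V^{-1}\|=\sqrt{\kappa_V(H)}$ built into the choice of $V$, so that $\|V\|\,\|V^{-1}\|=\kappa_V(H)$, turning the two displays into precisely the claimed bounds.

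There is no substantial obstacle here; the only point that needs a moment of care is that the normalizing factor $\|e_n^* V\|$ appearing in the definition of $Z_H$ has to be controlled from below (for the upper bound) and from above (for the lower bound), which is exactly what the two trivial observations $\|e_n^* V\| \le \|V\|$ and $\|e_n^* V\| \ge \|V^{-1}\|^{-1}$ provide, both being consequences of $\|e_n\|=1$. I would also remark that the argument uses only the diagonalizability of $H$ and never its Hessenberg structure — the latter matters only because $e_n$ is the distinguished vector in the applications — and that the presence of repeated eigenvalues causes no difficulty, since $f$ is evaluated pointwise on $\Spec H$.
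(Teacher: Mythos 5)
Your proof is correct. The paper states this lemma by citation to \cite[Lemma 2.4]{banks2021global} without reproducing the argument, but the derivation you give — rewriting $\E[|f(Z_H)|^2]^{1/2}$ as $\|e_n^* V f(D)\|/\|e_n^* V\|$, then bounding numerator and denominator separately via $e_n^* V f(D)=e_n^* f(H)V$, the two trivial bounds $\|V^{-1}\|^{-1}\le\|e_n^*V\|\le\|V\|$, and the normalization $\|V\|\|V^{-1}\|=\kappa_V(H)$ — is exactly the natural one, and there is no gap.
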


\paragraph{Implicit QR Algorithm.} For an invertible matrix $M$ we will use $[Q, R] = \qr(M)$ to denote that $M=QR$ is the unique QR decomposition of $M$ where the upper triangular part $R$ has  positive diagonal entries. 

We will assume access to a degree 1 implicit QR algorithm $\iqr(H, s)$, which is  $\muqr(n)$-stable in the sense of \cite[Definition 3.4 ]{banks2022II} and we will implement higher degree shifts by composing this $\iqr$ algorithm, that is, for any polynomial $p(z) = (z-s_1)\cdots (z-s_m)$ we define 
$$\iqr(H,p(z)):= \iqr(\iqr(\cdots \iqr(\iqr(H,s_1),s_2), \cdots), s_m),$$
and recall the following backward-stability guarantees given in \cite[Lemma 3.6]{banks2022II}. 

\begin{lemma}[Backward Error Guarantees for $\iqr$]
    \label{lem:iqr-multi-backward-guarantees}
    Fix $C > 0$ and let $p(z) = \prod_{\ell \in [m]}(z - s_\ell)$, where $\calS = \{s_1,...,s_m\} \subset \mathbb{D}(0,C\|H\|)$. If $\ax{\next{H}} = \iqr(H,p(z))$, and
    $$
        \muqr(n)\mach \le 1/4,
    $$
    there exists a unitary $\ax{Q}$ satisfying
    \begin{align}
        \left\|\ax{\next{H}} - \ax{Q}^\ast H \ax{Q} \right\| \le 1.4 m(1 + C)\|H\|\muqr(n)\mach. 
    \end{align}
\end{lemma}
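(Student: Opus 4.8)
The plan is to induct on the number of shifts $m$, peeling off one $\iqr$ step at a time, invoking the $\muqr(n)$-stability of a single shifted step, and composing the resulting unitary conjugations. Write $\ax{H_0}:=H$ and $\ax{H_j}:=\iqr(\ax{H_{j-1}},s_j)$ for $j=1,\dots,m$, so that $\ax{\next H}=\ax{H_m}$. At step $j$, the (degree-one) stability of $\iqr$ supplies a unitary $\ax{Q_j}$ with
\begin{equation*}
\ax{H_j}=\ax{Q_j}^\ast\,\ax{H_{j-1}}\,\ax{Q_j}+E_j,\qquad \|E_j\|\le \muqr(n)\,\mach\,\big(\|\ax{H_{j-1}}\|+|s_j|\big),
\end{equation*}
and hence, by the triangle inequality and unitary invariance, $\|\ax{H_j}\|\le\|\ax{H_{j-1}}\|+\|E_j\|$. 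I would then take $\ax Q:=\ax{Q_1}\ax{Q_2}\cdots\ax{Q_m}$ and unfold the recursion to obtain
\begin{equation*}
\ax{H_m}-\ax Q^\ast H\ax Q=\sum_{j=1}^m W_j^\ast E_j W_j,\qquad W_j:=\ax{Q_{j+1}}\cdots\ax{Q_m}\ \ \text{(unitary, with $W_m=I$)},
\end{equation*}
whence $\|\ax{H_m}-\ax Q^\ast H\ax Q\|\le\sum_{j=1}^m\|E_j\|$; this $\ax Q$ is the witness demanded by the lemma.

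It then remains to bound $\sum_j\|E_j\|$. Using $|s_j|\le C\|H\|$, this reduces to keeping the running norms $\|\ax{H_{j-1}}\|$ within a small factor of $\|H\|$: once one knows $\|\ax{H_{j-1}}\|\le (1+0.4)\|H\|$ for all $j\le m$, one gets $\|E_j\|\le\muqr(n)\mach(1.4+C)\|H\|\le 1.4\,\muqr(n)\mach(1+C)\|H\|$, and summing over the $m$ steps yields exactly the asserted $1.4\,m(1+C)\|H\|\muqr(n)\mach$. The norm bound is itself a short induction from the per-step estimate $\|\ax{H_j}\|\le(1+\muqr(n)\mach)\|\ax{H_{j-1}}\|+\muqr(n)\mach\,C\|H\|$, which unrolls to roughly $(1+\muqr(n)\mach)^{j}(1+C)\|H\|$ — harmless whenever $m\,\muqr(n)\mach(1+C)$ lies below a fixed absolute constant, which is precisely the regime in which the claimed bound is itself smaller than $\|H\|$ and thus the only regime of interest.

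I expect this last point to be the main obstacle: one must check that the $m$ composed $\iqr$ steps do not let the operator norm of the iterate drift away from $\|H\|$ by more than a fixed factor, using only $\muqr(n)\mach\le 1/4$ together with the fact that the algorithm is run only with $m$ small enough that the conclusion is non-vacuous. This controlled drift is exactly what the constant $1.4$ (in place of the $1$ one would obtain at $m=1$) is there to absorb. The remaining ingredients — the telescoping of the $\ax{Q_j}$ into $\ax Q$, the triangle inequality over the $m$ error terms, and the bound $|s_j|\le C\|H\|$ — are routine.
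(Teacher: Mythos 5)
This is an imported lemma --- the present paper cites it as Lemma 3.6 of \cite{banks2022II} and does not reprove it --- so there is no in-paper proof to compare against; what follows assesses the proposal on its own terms.

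The telescoping skeleton is correct and is certainly the right route: writing $\ax{H_j}=\iqr(\ax{H_{j-1}},s_j)$, extracting from the degree-one $\muqr$-stability a unitary $\ax{Q_j}$ and an error $E_j$, setting $\ax Q=\ax{Q_1}\cdots\ax{Q_m}$, and unfolding to $\ax{H_m}-\ax Q^\ast H\ax Q=\sum_j W_j^\ast E_j W_j$ with unitary $W_j$ is exactly the decomposition one wants, and $\|\ax{H_m}-\ax Q^\ast H\ax Q\|\le\sum_j\|E_j\|$ follows. You also correctly identify the crux: each $\|E_j\|$ is controlled in terms of $\|\ax{H_{j-1}}\|$, not $\|H\|$, so one must argue the iterates cannot drift far from the original scale.

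The gap is in how you dispose of the drift. You claim the argument is ``harmless whenever $m\,\muqr(n)\mach(1+C)$ lies below a fixed absolute constant'' and that this is ``the only regime of interest'' because the claimed bound is otherwise $\ge\|H\|$. That last step is circular: for the inequality to be vacuous you would need an a priori upper bound on $\|\ax{H_m}-\ax Q^\ast H\ax Q\|$, hence on $\|\ax{H_m}\|$, and bounding $\|\ax{H_m}\|$ is precisely the drift control you are trying to avoid. A bound being larger than $\|H\|$ does not make it trivially true, because the left-hand side is not a priori bounded by $O(\|H\|)$. Concretely, the bootstrap induction you sketch closes cleanly only under a quantitative hypothesis of the form $1.4\,m(1+C)\muqr(n)\mach\le 0.4$ (so that $\|\ax{H_{j-1}}\|\le 1.4\|H\|$ propagates), and this is \emph{not} implied by the stated assumption $\muqr(n)\mach\le 1/4$ alone, since $m$ and $C$ are unconstrained. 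So either the definition of $\muqr$-stability in \cite[Def.~3.4]{banks2022II} is stronger than the per-step estimate you posit (e.g.\ normalizes the error against the original $\|H\|$), or the lemma in Part~II carries an additional hypothesis that your restatement omits, or your drift-control step needs to be made honest by adding the smallness condition on $m(1+C)\muqr(n)\mach$ explicitly. As written, the proposal proves the estimate only in the ``small $m$'' regime and wrongly asserts that the remaining regime requires nothing.
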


Using Givens rotations, $\iqr(H, p(z))$ can be executed in 
$$T_{\iqr}(n, m) := 7mn^2$$ 
arithmetic operations and it is $\muqr(n)$-stable for $\muqr(n) = 32n^{3/2}$ (see \cite[Appendix A]{banks2022II} for details). Forward error guarantees for $\muqr(n)$-stable implicit QR algorithms on an input $H \in \bC^{n\times n}$ can also be given, this time in terms of the distance of the shifts to the spectrum of $H$.   More precisely, the following part of Lemma 3.9 in \cite{banks2022II} will be used repeatedly below.

\begin{lemma}[Forward Error Guarantees for $\iqr$]
    \label{lem:multiiqrstability}
    Let $H\in \bC^{n\times n}$ be a Hessenberg matrix and fix $C > 0$. Assume that $p(z) = \prod_{\ell \in [m]}(z - s_\ell)$, where  $\mathcal{S} = \{s_1, \dots, s_m \} \subset D(0,C\|H\|)$.  Furthermore, let $[Q, R] = \qr(p(H))$, $\next{H} = Q^\ast H Q$, and assume that
    \begin{align}
    \label{assum:machvsp}
        \mach \le \mach_{\iqr}(n,m,\|H\|,\kappa_V(H),\dist(\calS,\Spec{H})) 
        &:= \frac{1}{8\kappa_V(H)\muqr(n)}\left(\frac{\dist(\calS,\Spec{H})}{\|H\|}\right)^m   \\
        &= 2^{-O\left(\log n\kappa_V(H) + m\log\frac{\|H\|}{\dist(\calS,\Spec{H})}\right)}. \nonumber
        \end{align}
    Then, we have the forward error guarantees:
$$
       \left\|\ax{\next{H}} - \next{H}\right\|_F  \le 32\kappa_V(H) \|H\|\left(\frac{(2 + 2C)\|H\|}{\dist(\calS,\Spec{H})}\right)^m n^{1/2}\muqr(n)\mach. 
   $$
\end{lemma}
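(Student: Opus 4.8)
The plan is to turn the \emph{backward} guarantee of Lemma~\ref{lem:iqr-multi-backward-guarantees} into the stated \emph{forward} bound by quantifying how badly the conditioning of $p(H)$ amplifies that backward error. Fix a diagonalization $H = VDV^{-1}$ with $\|V\| = \|V^{-1}\| = \sqrt{\kappa_V(H)}$. The eigenvalues of $p(H)$ are the numbers $p(\lambda_i) = \prod_{\ell\in[m]}(\lambda_i - s_\ell)$, and for every $i,\ell$ one has $\dist(\mathcal S,\Spec H) \le |\lambda_i - s_\ell| \le \|H\| + |s_\ell| \le (1+C)\|H\|$; applying Lemma~\ref{lem:spectral-measure-apx}-style functional calculus to $p$ and $1/p$ therefore gives
\[
  \|p(H)\| \;\le\; \kappa_V(H)\big((1+C)\|H\|\big)^m,
  \qquad
  \sigma_{\min}\big(p(H)\big)\;=\;\frac{1}{\|p(H)^{-1}\|}\;\ge\;\frac{\dist(\mathcal S,\Spec H)^m}{\kappa_V(H)}.
\]
These two inequalities are the only place the spectral data enters, and the hypothesis $\mach \le \mach_{\iqr}$ is calibrated precisely so that the perturbations introduced below stay small compared to $\sigma_{\min}(p(H))$, i.e. so that $p(H)$ remains well away from singular.

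By Lemma~\ref{lem:iqr-multi-backward-guarantees} there is a unitary $\ax Q$ with $\|\ax{\next H} - \ax Q^\ast H \ax Q\| \le 1.4\,m(1+C)\|H\|\,\muqr(n)\mach$, so (after reconciling the diagonal phases forced by the positive‑diagonal convention on $R$) the problem reduces to showing that $\ax Q$ is a good forward approximation of the exact QR factor $Q$ of $p(H)$, since $\|\ax Q^\ast H \ax Q - Q^\ast H Q\| \le 2\|H\|\,\|\ax Q - Q\|$. Here I would use that $\iqr(H,p(z))$, being a composition of backward‑stable degree‑one implicit steps, computes a unitary $\ax Q$ whose first column is the normalization of a vector at distance $O\big(((1+C)\|H\|)^m m\,\muqr(n)\mach\big)$ from $p(H)e_1$ — the point being that this error is governed by the norms $\big\|\prod_{j\le \ell}(H - s_j)e_1\big\| \le ((1+C)\|H\|)^\ell$ of the partial products and \emph{not} by $\|p(H)\|$, so that dividing by $\|p(H)e_1\| \ge \sigma_{\min}(p(H))$ introduces only a \emph{single} factor of $\kappa_V(H)$. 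A quantitative implicit‑$Q$ argument — both $\ax{\next H}$ and $\next H$ are unreduced Hessenberg matrices unitarily similar to $H$ whose transforming unitaries have nearly equal first columns $\propto p(H)e_1$ — then upgrades the first‑column estimate to $\|\ax Q^\ast H \ax Q - \next H\| \lesssim \kappa_V(H)\big((2+2C)\|H\|/\dist(\mathcal S,\Spec H)\big)^m\,\muqr(n)\mach$; absorbing the backward term, passing from the operator to the Frobenius norm (the $n^{1/2}$), and tracking the remaining universal constants yields the claimed $32\,\kappa_V(H)\|H\|\big((2+2C)\|H\|/\dist(\mathcal S,\Spec H)\big)^m n^{1/2}\,\muqr(n)\mach$.

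The main obstacle is exactly this middle step: keeping the dependence on $\kappa_V(H)$ down to a single power. A naive analysis that treats the $m$ degree‑one implicit steps one at a time, applying a QR perturbation bound after each, would multiply the per‑step condition numbers and produce a fatal $\kappa_V(H)^m$; the fix is to invoke the \emph{composed} backward‑error bound of Lemma~\ref{lem:iqr-multi-backward-guarantees} (whose amplitude is only linear in $m(1+C)$) together with the implicit‑$Q$ structure, so that the exponential $\big((2+2C)\|H\|/\dist(\mathcal S,\Spec H)\big)^m$ factor appears exactly once — coming from $\sigma_{\min}(p(H))$ and the partial‑product norm bound — and is multiplied by only one $\kappa_V(H)$. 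Matching the precise base $2+2C$ of the exponential (rather than $1+C$) absorbs the slack from the finite‑arithmetic perturbations of $H$ accumulated over the $m$ steps, and checking that the $\mach$‑hypothesis keeps every intermediate quantity in the safe regime is then routine floating‑point bookkeeping on top of the standard arithmetic axioms.
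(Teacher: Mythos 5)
This lemma is not proved in the paper: Section \ref{sec:imported} presents it as imported material, quoting it directly from Part II of the series, namely \cite[Lemma 3.9]{banks2022II} (the same source that also supplies Lemma \ref{lem:guaranteetaum}). There is therefore no in-paper proof against which to measure your argument.

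On the merits, your outer bookkeeping is sound: from $H = VDV^{-1}$ one gets $\|p(H)\|\le\kappa_V(H)\max_i|p(\lambda_i)|\le\kappa_V(H)\big((1+C)\|H\|\big)^m$ and $\sigma_{\min}(p(H))^{-1}=\|p(H)^{-1}\|\le\kappa_V(H)\max_i|p(\lambda_i)|^{-1}\le\kappa_V(H)\,\dist(\mathcal{S},\Spec{H})^{-m}$, and reading the base $(2+2C)$ as absorbing polynomial-in-$m$ slack into the exponential is a reasonable interpretation of the stated constant. The gap is in the middle of your argument. Lemma \ref{lem:iqr-multi-backward-guarantees} asserts only the \emph{existence} of some unitary $\ax{Q}$ with small backward error; it says nothing about $\ax{Q}$'s first column, so the assertion that ``$\iqr(H,p(z))$ \dots{} computes a unitary $\ax{Q}$ whose first column is the normalization of a vector at distance $O(\cdots)$ from $p(H)e_1$'' is appealing to structure that can only be extracted by opening up \cite[Definition 3.4]{banks2022II}, which this paper treats as a black box — it does not follow from the lemmas you are allowed to cite. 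And even granting a first-column estimate, the ``quantitative implicit-$Q$ argument'' is not a freebie: its conditioning depends on the subdiagonals of $\next{H}$ being bounded away from zero, and without an explicit estimate it is not clear you retain only a \emph{single} power of $\kappa_V(H)$. A more direct route — and the one that matches the exponent $m$ and the single $\kappa_V(H)$ in the bound — is perturbation theory for the QR factor of $p(H)$: if $\ax{Q}$ is the exact $Q$-factor of some $p(H)+\Delta$, then $\|\ax{Q}-Q\|\lesssim\|\Delta\|/\sigma_{\min}(p(H))$, so the numerator and the denominator each contribute the growth factors you want exactly once. But showing the composed $\iqr$ step realizes the exact QR factorization of such a perturbed $p(H)$ again requires the internals of the $\muqr(n)$-stability definition, which is precisely the content established in Part II and only cited here.
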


\paragraph{Computing $\tau^m$.} For a Hessenberg matrix $H\in \bC^{n\times n}$ and $s\in \bC$, our algorithm needs to estimate quantities of the form $\|e_n^* (s-H)^{-m}\|^{-1}$. For this task we will use the subroutine $\comptau{m}$ which was analyzed in \cite{banks2022II}. 
\bigskip

\begin{boxedminipage}{\textwidth}
$$\comptau{m}$$
    \textbf{Input:} Hessenberg $H\in \bC^{n\times n}$, polynomial $p(z)=(z-s_1)\cdots (z-s_m)$ \\
    \textbf{Output:} $\nn \geq 0$ \\
    \textbf{Ensures:} $|\nn - \tau_p (H)^m| \le 0.001 \tau_p(H)^m$ 
\begin{enumerate}
    \item  $[\ax{\hat{H}},  \ax{R}_1, \dots, \ax{R}_m] \gets \iqr (H, p(z))$
    \item $\nn \gets \fl\left( (\ax{R}_1)_{nn}\cdots (\ax{R}_m)_{nn} \right)$
\end{enumerate} 
\end{boxedminipage}
\bigskip

\begin{lemma}[Lemma 3.9 in \cite{banks2022II}] 
\label{lem:guaranteetaum}
    If $\calS = \{s_1,...,s_m\} \subset \mathbb{D}(0,C\|H\|)$ and
    \begin{align}
        \label{assum:comptau}
        \mach 
        &\le \mach_{\comptau{}}(n,m,C,\|H\|,\kappa_V(H),\dist(\calS,\Spec{H})) \\
        &:= \frac{1}{6 \cdot 10^3 \kappa_V(H) \muqr(n)}\left(\frac{\dist(\calS,\Spec{H})}{(2 + 2C)\|H\|}\right)^{2m} \nonumber  \\
        &= 2^{-O\left(\log n\kappa_V(H) + m\log \frac{\|H\|}{\dist(\calS,\Spec{H})}\right)}, \nonumber
    \end{align}
    then $\comptau{m}$ satisfies its guarantees, and runs in $$T_{\comptau{}}(n,m):= T_{\iqr}(n, m) + m = O(m n^2)$$ arithmetic operations.
\end{lemma}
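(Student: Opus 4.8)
The plan is to handle the two claims separately. The running-time bound is immediate: Step~1 runs $\iqr(H,p(z))$, which costs $T_{\iqr}(n,m)=7mn^2$ arithmetic operations (see the discussion preceding Lemma~\ref{lem:iqr-multi-backward-guarantees}), and Step~2 forms a product of $m$ floating point numbers in $m-1$ multiplications, for a total of at most $T_{\iqr}(n,m)+m=O(mn^2)$.

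For the accuracy guarantee I would first isolate the exact identity
\[
    \tau_p(H)^m \;=\; \prod_{i=1}^{m}(R_i)_{nn},
\]
where $\tau_p(H)^m:=\|e_n^\ast p(H)^{-1}\|^{-1}$ (the natural generalization of the quantity defined in Section~\ref{sec:introoverview}) and $R_1,\dots,R_m$ are the triangular factors produced by the composed shifted QR iteration $\iqr(H,p(z))$ run in \emph{exact} arithmetic. This rests on the standard fact that composing the degree-$1$ shifted QR steps with shifts $s_1,\dots,s_m$ produces unitaries $Q_1,\dots,Q_m$ and upper-triangular factors $R_1,\dots,R_m$ with positive diagonals such that $p(H)=(Q_1\cdots Q_m)(R_m\cdots R_1)$ --- proved by induction from $H_{i-1}-s_i=Q_iR_i$, $H_i=Q_i^\ast H_{i-1}Q_i$ (with $H_0=H$) and the commutativity of polynomials in $H$. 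Since a product of upper-triangular matrices with positive diagonal is again upper-triangular with positive diagonal, $R:=R_m\cdots R_1$ is precisely the triangular factor of $\qr(p(H))$ and $R_{nn}=\prod_i(R_i)_{nn}$; and since $R^{-1}$ is upper triangular with $(n,n)$-entry $R_{nn}^{-1}$, we get $\|e_n^\ast p(H)^{-1}\|=\|e_n^\ast R^{-1}\|=R_{nn}^{-1}$, which rearranges to the displayed identity.

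It then remains to compare $\nn=\fl(\prod_i(\ax R_i)_{nn})$, where $\ax R_1,\dots,\ax R_m$ are the \emph{computed} triangular factors, against $\prod_i(R_i)_{nn}$. I would split the error into (a) replacing each exact $(R_i)_{nn}$ by the computed $(\ax R_i)_{nn}$, and (b) the roundoff in forming the $m$-fold product. Part~(b) contributes relative error at most $(m-1)\mach/(1-(m-1)\mach)$ by the standard analysis of products \cite[Ch.~2]{higham2002accuracy}, which is negligible under~\eqref{assum:comptau}. For part~(a) one invokes the forward-stability analysis of the composed $\iqr$ from \cite[Lemma 3.9]{banks2022II} that also underlies Lemma~\ref{lem:multiiqrstability}: tracking the Givens rotations through the $m$ sweeps bounds the absolute error in each $(\ax R_i)_{nn}$, and combining this with the lower bound $\prod_i(R_i)_{nn}=\|e_n^\ast p(H)^{-1}\|^{-1}\ge \dist(\calS,\Spec H)^m/\kappa_V(H)$ (from $\|p(H)^{-1}\|\le\kappa_V(H)\,\min_{\lambda\in\Spec H}|p(\lambda)|^{-1}\le\kappa_V(H)/\dist(\calS,\Spec H)^m$, or via Lemma~\ref{lem:spectral-measure-apx} applied to $f=1/p$) turns it into a relative error of order $\kappa_V(H)\muqr(n)\big((2+2C)\|H\|/\dist(\calS,\Spec H)\big)^{2m}\mach$. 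Under~\eqref{assum:comptau} this is at most $5\cdot10^{-4}$, and adding (a) and (b) yields $|\nn-\tau_p(H)^m|\le 10^{-3}\,\tau_p(H)^m$.

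The main obstacle is part~(a). The composed $\iqr$ is a chain of $m$ unitary similarities whose intermediate triangular factors can be ill-conditioned, with amplification as large as $(\|H\|/\dist(\calS,\Spec H))^m$; on top of this $\prod_i(R_i)_{nn}$ can itself be as small as $\dist(\calS,\Spec H)^m/(\|H\|^m\kappa_V(H))$, so passing from an absolute to a relative error bound costs a further factor $(\|H\|/\dist(\calS,\Spec H))^m$. These two compounding factors, each exponential in $m$, are exactly what force the $2m$-th power in the precision requirement~\eqref{assum:comptau}; establishing the relevant forward-stability bound rigorously is the substance of the $\iqr$ analysis in \cite{banks2022II}, and granting it the rest is routine floating point bookkeeping.
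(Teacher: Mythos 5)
This lemma is \emph{imported}, not proved here: the paper labels it ``Lemma 3.9 in \cite{banks2022II}'' and relies on the Part~II prequel for the argument, so there is no in-paper proof to compare against. That said, your reconstruction has the right architecture. The exact identity $\tau_p(H)^m=\prod_i(R_i)_{nn}$ is correct and is proved exactly as you describe (the composed shifted QR steps give $p(H)=(Q_1\cdots Q_m)(R_m\cdots R_1)$, the product of upper triangulars with positive diagonals is again one, and since $e_n^\ast R^{-1}$ has only its last entry nonzero, $\|e_n^\ast p(H)^{-1}\|=R_{nn}^{-1}$). Splitting the error into (a) the forward error in the computed $(\ax R_i)_{nn}$ and (b) the roundoff in the $m$-fold product is also the right move, and your explanation of where the $2m$-th power comes from --- one factor $(\|H\|/\dist)^m$ from the amplification of errors through the $m$ QR sweeps, and another from converting an absolute bound into a relative one against the small target $\prod_i(R_i)_{nn}\ge\dist^m/\kappa_V(H)$ --- correctly identifies the two compounding effects baked into~\eqref{assum:comptau}.

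The one caveat: as you yourself flag, part (a) is the real content, and you defer to ``the forward-stability analysis of the composed $\iqr$ from \cite[Lemma 3.9]{banks2022II}''. That is honest but slightly circular, since Lemma 3.9 of \cite{banks2022II} is the very result being proved; the actual input is the $\muqr(n)$-stability property of the degree-$1$ $\iqr$ step and a careful inductive accounting that tracks not just the Hessenberg iterate but also each $(\ax R_i)_{nn}$. Note also that the Lemma~\ref{lem:multiiqrstability} quoted in this paper bounds only $\|\ax{\next H}-\next H\|_F$, not the errors in the $R_i$ factors, so it is not by itself sufficient; the Part~II argument must (and does) carry a parallel bound on the $R_i$'s. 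Since that bookkeeping is exactly what is omitted from this paper by design, your proposal is as complete as it can reasonably be without reproducing the prequel's proof.
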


\paragraph{Shift Regularization.} In this paper we will only call $\comptau{m}$ on polynomials of the form $p(z)=(z-s)^m$ for some $s\in \bC$. So, proceeding as in \cite{banks2022II}, to have a control on the relative accuracy of $\comptau{m}$, we will randomly perturb $s$ to ensure that it is far enough from the spectrum of the input matrix. To be precise, we will use the following particular case of \cite[Lemma 3.10]{banks2022II}. 

\begin{lemma}[Regularization of Shifts]
    \label{lem:fixguarantee1} 
    Let $s \in \bC$  and  $\pretolr\geq \tolr >0$, and  assume that $ \tolr +\pretolr \leq \frac{\gap(H)}{2}.$
    Let $\w \sim \text{Unif}(D(0, \pretol))$  and $\chs := s+w$. Then with probability at least $1-\left(\tol/\pretol\right)^2$, we have $\dist(\chs,\Spec H) \ge \tolr$.
\end{lemma}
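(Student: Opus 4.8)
The plan is to argue purely geometrically: identify the ``bad'' perturbed shifts as those landing in a small neighborhood of the spectrum, use the gap hypothesis to show the perturbation disk can meet at most one such neighborhood, and then compare areas. (This is the specialization of \cite[Lemma 3.10]{banks2022II} to a single point; since that lemma is only cited and not reproduced here, I give the self-contained argument.)

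First, since $\w \sim \unif(D(0,\pretol))$, the perturbed shift $\chs = s + \w$ is distributed uniformly on the disk $D(s,\pretol)$, which has area $\pi\pretol^2$. Writing $D^{\circ}(x,r) := \{z \in \bC : |z-x| < r\}$ for the open disk, the event $\dist(\chs, \Spec H) < \tolr$ is exactly the event that $\chs$ lies in $\bigcup_{i} D^{\circ}(\lambda_i, \tolr)$, where $\lambda_1,\dots,\lambda_n$ are the eigenvalues of $H$. Hence
\[
\P\big[\dist(\chs, \Spec H) < \tolr\big]
= \frac{\area\big(D(s,\pretol)\cap \textstyle\bigcup_i D^{\circ}(\lambda_i,\tolr)\big)}{\pi\pretol^2}.
\]
(Note the hypothesis $\tolr+\pretolr \le \gap(H)/2$ forces $\gap(H)>0$, i.e.\ $H$ has simple spectrum, so this indexing is unambiguous.)

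Second, I claim $D(s,\pretol)$ meets at most one of the disks $D^{\circ}(\lambda_i,\tolr)$. If it met both $D^{\circ}(\lambda_i,\tolr)$ and $D^{\circ}(\lambda_j,\tolr)$ with $i\ne j$, picking points $x,y$ in the respective intersections and applying the triangle inequality gives $|s-\lambda_i| \le |s-x| + |x-\lambda_i| < \pretol + \tolr$ and likewise $|s-\lambda_j| < \pretol+\tolr$, whence $|\lambda_i-\lambda_j| < 2(\pretol+\tolr) \le \gap(H)$, contradicting the definition of $\gap(H)$. Consequently the set in the numerator above is contained in a single disk of radius $\tolr$, so its area is at most $\pi\tolr^2$, and therefore $\P[\dist(\chs,\Spec H) < \tolr] \le (\tolr/\pretol)^2$, which is equivalent to the stated bound.

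I do not expect any real obstacle here; the only point requiring a little care is keeping the disks around the eigenvalues \emph{open} (equivalently, observing that the relevant boundary circles have Lebesgue measure zero) so that the strict inequalities needed in the ``at most one disk'' step propagate cleanly. Everything else is a one-line area comparison.
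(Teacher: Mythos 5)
Your proof is correct. The paper itself does not reprove this statement --- it imports it as a particular case of Lemma~3.10 from \cite{banks2022II} --- but your self-contained argument is exactly the standard one for that lemma: the gap hypothesis $\tolr+\pretolr\leq\gap(H)/2$ ensures $D(s,\pretolr)$ can meet at most one of the disks $D^{\circ}(\lambda_i,\tolr)$, and then the failure probability is bounded by the area ratio $\pi\tolr^2/(\pi\pretolr^2)=(\tolr/\pretolr)^2$. Your side remark about keeping the eigenvalue disks open (or, equivalently, discarding a measure-zero boundary) is the only care point and you handle it correctly.
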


\subsection{Pseudospectrum}
\label{sec:pseudospectrum}

Given $M\in \bC^{n\times n}$ and $\epsilon>0$ the $\epsilon$-pseudospectrum of $M$ is defined as 
\begin{equation}
   \label{eqn:pseudodef2}
   \Lambda_\epsilon(M): = \left\{\lambda \in \bC : \big\|(\lambda - M)^{-1}\big\| \geq 1/\epsilon \right\}.
\end{equation}
In particular $\Spec(M)\subset \Lambda_\epsilon(M)$ for every $\epsilon>0$, and one can show (see \cite{trefethen2020spectra})  that 
\begin{equation*}
    \label{eq:characterizationofLambda}
    \Lambda_\epsilon(M) = \{\lambda \in \bC : \lambda\in \mathrm{Spec}(M+E) \text{ for some } \|E\|\leq \epsilon\}, 
\end{equation*}
and as direct consequence the following two standard properties follow. 

\begin{lemma}
\label{lem:decrementeps} 
    For any $M, E, U\in \bC^{n\times n}$ with $\|E\|\leq \epsilon$ and $U$ unitary, the following are true 
    \begin{enumerate}[label=\roman*)]
        \item $\Lambda_{\epsilon}(UM U^*) = \Lambda_\epsilon(M)$. 
        \item $\Lambda_\epsilon(M+E) \subset \Lambda_{\epsilon-\|E\|}(M)$. 
    \end{enumerate}
\end{lemma}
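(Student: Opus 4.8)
The plan is to read off both parts from the two descriptions of $\Lambda_\epsilon$ recorded above — the resolvent-norm definition \eqref{eqn:pseudodef2} and the characterization $\Lambda_\epsilon(N)=\{\lambda\in\bC:\lambda\in\Spec{(N+F)}\text{ for some }\|F\|\le\epsilon\}$ — together with nothing more than the triangle inequality for the operator norm. I do not expect a substantive obstacle: the one point needing care is the bookkeeping of how an additive perturbation of the matrix shifts the pseudospectral parameter, and the hypothesis $\|E\|\le\epsilon$ is there precisely to keep the shifted parameter non-negative.

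For (i) I would argue at the level of resolvent norms. Since $U$ is unitary, $\Spec{(UMU^\ast)}=\Spec{M}$, and for $\lambda$ outside this common set $(\lambda-UMU^\ast)^{-1}=U(\lambda-M)^{-1}U^\ast$, so $\|(\lambda-UMU^\ast)^{-1}\|=\|(\lambda-M)^{-1}\|$ by unitary invariance of the operator norm; on the common spectrum both resolvent norms equal $+\infty$. Thus $\lambda\mapsto\|(\lambda-M)^{-1}\|$ and $\lambda\mapsto\|(\lambda-UMU^\ast)^{-1}\|$ agree on all of $\bC$, hence so do their superlevel sets at height $1/\epsilon$, which are exactly $\Lambda_\epsilon(M)$ and $\Lambda_\epsilon(UMU^\ast)$. (Equivalently, from the characterization: $F\mapsto U^\ast F U$ is a norm-preserving bijection between the perturbations witnessing membership in $\Lambda_\epsilon(UMU^\ast)$ and those witnessing membership in $\Lambda_\epsilon(M)$, using $\Spec{(UMU^\ast+F)}=\Spec{(M+U^\ast F U)}$.)

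For (ii) I would use the characterization and the triangle inequality. Take any $\lambda\in\Spec{(M+F)}$ with $\|F\|\le\epsilon-\|E\|$ — this is the generic point of $\Lambda_{\epsilon-\|E\|}(M)$, a set which is non-vacuous since $\epsilon-\|E\|\ge 0$ and which already includes $\lambda\in\Spec M$ via the choice $F=0$. Writing $M+F=(M+E)+(F-E)$ and bounding $\|F-E\|\le\|F\|+\|E\|\le\epsilon$, the characterization applied to $M+E$ gives $\lambda\in\Lambda_\epsilon(M+E)$, that is, $\Lambda_{\epsilon-\|E\|}(M)\subseteq\Lambda_\epsilon(M+E)$, which is the containment asserted in (ii). Running the same bookkeeping from the other side gives the complementary inclusion $\Lambda_\epsilon(M+E)\subseteq\Lambda_{\epsilon+\|E\|}(M)$; the two together are the standard two-sided stability of the pseudospectrum under additive perturbations, $\Lambda_{\epsilon-\|E\|}(M)\subseteq\Lambda_\epsilon(M+E)\subseteq\Lambda_{\epsilon+\|E\|}(M)$. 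As each part is a one-line consequence of the characterization plus the triangle inequality, I expect no genuine difficulty.
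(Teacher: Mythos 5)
Part (i) is fine, and since the paper offers no written proof (it states the lemma as a direct consequence of the perturbation characterization of $\Lambda_\epsilon$), your route is exactly the intended one. The issue is in (ii): the inclusion you actually prove, $\Lambda_{\epsilon-\|E\|}(M)\subseteq\Lambda_\epsilon(M+E)$, is the \emph{reverse} of the containment printed in the lemma, so your closing claim that it ``is the containment asserted in (ii)'' is not literally correct. What saves you is that the printed statement is itself erroneous: $\Lambda_\epsilon(M+E)\subset\Lambda_{\epsilon-\|E\|}(M)$ fails already for $1\times 1$ matrices, e.g.\ $M=0$, $E=\epsilon/2$, where $\Lambda_\epsilon(M+E)$ is the closed disk of radius $\epsilon$ about $\epsilon/2$, which is not contained in $\Lambda_{\epsilon/2}(0)$, the closed disk of radius $\epsilon/2$ about $0$. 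The statement the paper actually invokes later (in Lemma \ref{lem:deflationpseudospectrum}, Lemma \ref{lem:controloftheparameters}, and Proposition \ref{prop:rhformguarantees}) is $\Lambda_{\epsilon-\|E\|}(M+E)\subset\Lambda_\epsilon(M)$, which, upon replacing $E$ by $-E$ (only $\|E\|$ enters), is precisely the inclusion you proved. So your argument is mathematically correct and establishes the version of (ii) that the paper needs; the gap is one of bookkeeping and presentation, not substance. You should have flagged the mismatch between your inclusion and the printed one --- i.e.\ pointed out that (ii) as stated has the two pseudospectral parameters attached to the wrong matrices and proved the corrected form --- rather than silently identifying the two; as written, a reader checking your proof against the lemma would conclude you proved the wrong direction.
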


We refer the reader to the excellent book \cite{trefethen2020spectra} for a comprehensive treatment on the notion of pseudospectrum. For this paper we will only need the following basic lemmas that relate the pseudospectrum to the notions of eigenvalue gap and eigenvector condition number. First, we recall that the pseudospectrum can be controlled in terms of the eigenvector condition number. 

\begin{lemma}[\cite{trefethen2020spectra}]
\label{lem:pseudospectralbauerfike}
   For every $M \in \C^{n\times n}$,
    \begin{equation} \label{eqn:lambdakappa}
        \bigcup_i D(\lambda_i,\eps)\subset \Lambda_\eps(M)\subset \bigcup_i D(\lambda_i, \eps \kappa_V(M)).
    \end{equation}
\end{lemma}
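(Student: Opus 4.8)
The plan is to prove the two inclusions separately; this is the pseudospectral analogue of the Bauer--Fike theorem, and only elementary singular value and norm estimates are needed, together with the resolvent definition \eqref{eqn:pseudodef2}.

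\textbf{Left inclusion.} Fix an eigenvalue $\lambda_i$ of $M$ and a point $\lambda$ with $|\lambda-\lambda_i|\le\epsilon$. If $\lambda=\lambda_i$ then $\lambda-M$ is singular, so by convention $\|(\lambda-M)^{-1}\|=\infty\ge 1/\epsilon$ and $\lambda\in\Lambda_\epsilon(M)$. Otherwise $\lambda-M$ is invertible and $\|(\lambda-M)^{-1}\|=1/\sigma_{\min}(\lambda-M)$. Since $\lambda_i-\lambda$ is an eigenvalue of $M-\lambda$, the general bound $\sigma_{\min}(A)\le|\mu|$ for any eigenvalue $\mu$ of any matrix $A$ (witnessed by a unit eigenvector: $\|Av\|=|\mu|$ forces $\sigma_{\min}(A)\le|\mu|$) gives $\sigma_{\min}(\lambda-M)\le|\lambda-\lambda_i|\le\epsilon$, hence $\|(\lambda-M)^{-1}\|\ge1/\epsilon$ and $\lambda\in\Lambda_\epsilon(M)$. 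Alternatively one can invoke the perturbation characterization of $\Lambda_\epsilon$ stated just above: with $v_i$ a unit right eigenvector of $\lambda_i$ and $E=(\lambda-\lambda_i)v_iv_i^\ast$ one has $\|E\|=|\lambda-\lambda_i|\le\epsilon$ and $(M+E)v_i=\lambda v_i$, so $\lambda\in\Spec{(M+E)}\subseteq\Lambda_\epsilon(M)$.

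\textbf{Right inclusion.} We may assume $M$ is diagonalizable, since otherwise $\kappa_V(M)=\infty$ and the claim is vacuous. Fix $\lambda\in\Lambda_\epsilon(M)$. If $\lambda\in\Spec{M}$ then $\lambda=\lambda_i$ for some $i$ and hence $\lambda\in D(\lambda_i,\epsilon\kappa_V(M))$ trivially (as $\epsilon>0$). Otherwise, choose a diagonalization $M=VDV^{-1}$ with $\|V\|\,\|V^{-1}\|=\kappa_V(M)$ — the infimum defining $\kappa_V(M)$ is attained for diagonalizable $M$, and in any case one may instead take an $\eta$-near-optimal $V$ and let $\eta\to0$ at the end. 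Writing $(\lambda-M)^{-1}=V(\lambda-D)^{-1}V^{-1}$ and using that $(\lambda-D)^{-1}$ is diagonal with entries $(\lambda-\lambda_i)^{-1}$,
$$\frac{1}{\epsilon}\le\|(\lambda-M)^{-1}\|\le\|V\|\,\|(\lambda-D)^{-1}\|\,\|V^{-1}\|=\kappa_V(M)\cdot\big(\min_i|\lambda-\lambda_i|\big)^{-1}.$$
Rearranging gives $\min_i|\lambda-\lambda_i|\le\epsilon\kappa_V(M)$, i.e. $\lambda\in D(\lambda_i,\epsilon\kappa_V(M))$ for the minimizing index $i$.

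There is no substantial obstacle: the only mildly delicate points are the degenerate cases where $\lambda$ is exactly an eigenvalue (so the resolvent norm is $+\infty$) and where $M$ is non-diagonalizable (so the right-hand radius is $+\infty$), both dispatched immediately, plus the standard fact that the infimum defining $\kappa_V(M)$ is attained, which can be sidestepped by the $\eta\to0$ argument noted above.
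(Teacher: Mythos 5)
Your proof is correct. The paper itself gives no argument for this lemma — it is cited directly to Trefethen and Embree — and what you have written is the standard Bauer--Fike-type argument that appears there: the left inclusion via a rank-one perturbation (or equivalently the bound $\sigma_{\min}(\lambda-M)\le\dist(\lambda,\Spec M)$), and the right inclusion via the resolvent factorization $(\lambda-M)^{-1}=V(\lambda-D)^{-1}V^{-1}$. You also handle the degenerate cases ($\lambda\in\Spec M$, non-diagonalizable $M$) and the question of whether the infimum defining $\kappa_V$ is attained — note the paper itself tacitly assumes attainment elsewhere (e.g.\ in \eqref{eqn:specmeasure}), so your $\eta\to 0$ fallback is more careful than strictly necessary in context.
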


When analyzing the algorithm in finite arithmetic it will be necessary to have some control on the eigenvector condition number and minimum eigenvalue gap of the matrices produced by the algorithm. For this, we will use the notion of $\shat$-shattered pseudospectrum, which is very similar to the notion of shattered pseudospectra introduced in \cite{banks2020pseudospectral}, but without referencing a grid. 

\begin{definition}[$\shat$-shattered pseudospectrum]\label{def:shat}
Let $\epsilon, \shat>0$ and $M \in \bC^{n\times n}$. We say that $\Lambda_\epsilon(M)$ is $\shat$-shattered if there exist $n$ disjoint disks $D_1, \dots, D_n$ of radius $\shat$ such that
\begin{enumerate}[label=\roman*)]
    \item (Containment) $\Lambda_\epsilon(M)\subset \bigcup_{i=1}^n D_i.$
    \item (Separation) Any two disks are at distance at least $\shat$, that is, $\dist(D_i, D_j)\geq \shat$ for all $i\neq j$.    
\end{enumerate}
\end{definition}

In what can be thought as a converse of Lemma \ref{lem:pseudospectralbauerfike}, the shattering parameter can be used to control the eigenvector condition number of a matrix and its minimum eigenvalue gap. 

\begin{lemma}[$\kappa_V$ from $\shat$ and $\epsilon$]
\label{lem:kappavfromshattering}
Let  $\epsilon, \shat>0$ and $M\in \bC^{n\times n}$. If $\Lambda_\epsilon(M)$ is $\shat$-shattered, then 
\begin{enumerate}[label=\roman*)]
    \item  $\kappa_V(M)\leq \frac{n \shat}{\epsilon}$. 
    \item  $\gap(M)\geq \shat$. 
\end{enumerate}

\end{lemma}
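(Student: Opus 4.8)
The plan is to get both parts from a single geometric picture, with the Riesz spectral projectors of $M$ supplying the bound in (i).

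First I would extract the structural content of the hypothesis. Let $D_1,\dots,D_n$ be the disks of radius $\shat$ witnessing that $\Lambda_\epsilon(M)$ is $\shat$-shattered, and let $c_j$ denote the center of $D_j$. Since $\Spec(M)\subseteq\Lambda_\epsilon(M)\subseteq\bigcup_jD_j$ and the disks are pairwise disjoint and $\shat$-separated, each eigenvalue of $M$ lies in exactly one disk, and shattering forces each disk to carry exactly one eigenvalue (counting with multiplicity there are $n$ of each), which is therefore simple; in particular $M$ is diagonalizable. Write $M=\sum_{j=1}^n\lambda_jv_jw_j^*$ with $\lambda_j\in D_j$, biorthogonality $w_j^*v_k=\delta_{jk}$, and normalization $\|v_j\|=1$, and let $P_j=v_jw_j^*$ be the corresponding spectral projectors. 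Part (ii) is then immediate: if $\lambda_i\ne\lambda_j$ then they lie in the distinct disks $D_i,D_j$, so $|\lambda_i-\lambda_j|\ge\dist(D_i,D_j)\ge\shat$, whence $\gap(M)\ge\shat$.

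For part (i) the heart of the matter is the estimate $\|P_j\|\le\shat/\epsilon$. I would represent $P_j=\frac{1}{2\pi i}\oint_{\gamma_j}(zI-M)^{-1}\,dz$, where $\gamma_j$ is the circle of radius $r$ about $c_j$ for an arbitrary $r\in(\shat,2\shat)$. Such a circle encloses $\lambda_j$ (which lies within distance $\shat<r$ of $c_j$) and no other eigenvalue (an eigenvalue in a disk $D_k$ with $k\ne j$ is at distance at least $2\shat>r$ from $c_j$ by $\shat$-separation), so the integral really is the projector onto $\lambda_j$. The key point is that $\gamma_j$ avoids $\Lambda_\epsilon(M)$: each $z\in\gamma_j$ is at distance $r>\shat$ from $c_j$ hence outside $D_j$, and at distance $r<2\shat$ from $c_j$ hence outside every other $D_k$, so $z\notin\bigcup_kD_k\supseteq\Lambda_\epsilon(M)$. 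By the definition of the pseudospectrum this gives $\|(zI-M)^{-1}\|<1/\epsilon$ for all $z\in\gamma_j$, so $\|P_j\|\le\frac{1}{2\pi}\cdot 2\pi r\cdot\frac{1}{\epsilon}=r/\epsilon$; since $P_j$ is independent of $r$, letting $r\downarrow\shat$ yields $\|P_j\|\le\shat/\epsilon$. To finish, normalizing $\|v_j\|=1$ forces $\|w_j\|=\|v_jw_j^*\|=\|P_j\|\le\shat/\epsilon$; taking $V=[\,v_1\ \cdots\ v_n\,]$ and $D=\mathrm{diag}(\lambda_1,\dots,\lambda_n)$, biorthogonality gives $M=VDV^{-1}$ with the rows of $V^{-1}$ equal to the $w_j^*$, so $\|V\|\le\|V\|_F=\sqrt n$ and $\|V^{-1}\|\le\|V^{-1}\|_F=\big(\sum_j\|w_j\|^2\big)^{1/2}\le\sqrt n\,\shat/\epsilon$, and therefore $\kappa_V(M)\le\|V\|\,\|V^{-1}\|\le n\shat/\epsilon$.

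The step I expect to require the most care is the very first one: checking that shattering really puts exactly one, simple, eigenvalue in each disk — ruling out two eigenvalues crowded into one disk, or a defective eigenvalue hiding inside a disk (which would make $\kappa_V(M)$ infinite). This is the disk analogue of the ``one eigenvalue per grid cell'' property of shattered pseudospectra in \cite{banks2020pseudospectral}, and I would justify it from Lemma \ref{lem:pseudospectralbauerfike} (which places a disk $D(\lambda_i,\epsilon)$ around every eigenvalue inside $\Lambda_\epsilon(M)$), the disjointness and $\shat$-separation of the $D_j$, and the standard fact that every bounded connected component of $\Lambda_\epsilon(M)$ contains an eigenvalue. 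Everything downstream of that structural fact is the routine contour-and-norm bookkeeping sketched above.
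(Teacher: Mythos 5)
Your proof is correct and follows essentially the same route as the paper's, just made self-contained: the contour-integral bound $\|P_j\|\le\shat/\epsilon$ on the Riesz projector is precisely the content of the cited \cite[Lemma~3.11]{banks2020pseudospectral} (since $\kappa(\lambda_j)=\|P_j\|$), and your Frobenius-norm estimate $\kappa_V(M)\le\|V\|_F\|V^{-1}\|_F=\sqrt{n}\bigl(\sum_j\|w_j\|^2\bigr)^{1/2}$ reproduces the cited \cite[Lemma~3.1]{banks2021gaussian}, so the two proofs coincide step for step. One caveat worth noting: both your argument and the paper's rest on the tacit assumption that each $D_j$ carries exactly one simple eigenvalue, and the justification you sketch --- that every bounded component of $\Lambda_\epsilon(M)$ contains an eigenvalue --- rules out an eigenvalue-free disk that meets the pseudospectrum, but does not by itself exclude a repeated or defective eigenvalue crowding into one disk while some other disk $D_k$ misses $\Lambda_\epsilon(M)$ entirely (the definition of shattering does not demand that each disk intersect the pseudospectrum); the paper's proof carries the identical implicit hypothesis, which is always satisfied in the contexts where this lemma is invoked, so the gap is shared rather than introduced by you.
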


\begin{proof}
First note that ii) follows from the fact taht $\Spec(M)\subset \Lambda_\epsilon(M)$ and the definition of $\shat$-shattering. To show i) let $\lambda_1, \dots, \lambda_n$ be the eigenvalues of $M$, and for every $i$ let $\kappa(\lambda_i)$ the denote the eigenvalue condition number of $\lambda_i$ (see \cite[Section 2.2]{banks2020pseudospectral} for a definition). A trivial modification of the proof of Lemma 3.11 in \cite{banks2020pseudospectral} yields that $\kappa(\lambda_i) \leq \frac{\shat}{\epsilon}$. Then, by Lemma 3.1 in \cite{banks2021gaussian} we have 
$$\kappa_V(M) \leq \sqrt{n\sum_{i=1}^n \kappa(\lambda_i)^2} \leq \frac{n\shat}{\epsilon}.$$
\end{proof}

\section{The Shifting Strategy}
\label{sec:shiftingstrategy}

\subsection{Analysis of $\dispec$}
\label{sec:dispec}

We define the subroutine $\dispec(H, s, m)$ as follows and prove its guarantees below. 
\bigskip

\begin{boxedminipage}{\textwidth}
$$\dispec$$
    \textbf{Input:} Hessenberg $\hess\in \bC^{n\times n}$, $s\in\bC$,  $m\in \bN$    \\
    \textbf{Output:} $\dd \geq 0$ \\
    \textbf{Ensures:} $ \frac{0.998}{\kappa_V(\hess)^{\frac{1}{m}}} \ds{s}{\hess} \leq \dd \leq  \frac{1.003 \kappa_V(\hess)^{\frac{1}{m}} }{\P\big[|Z_\hess-s|=\ds{s}{\hess}\big]^{\frac{1}{2m}}} \ds{s}{\hess}$
\begin{enumerate}
    \item $\nn \gets \comptau{m} (\hess, (z-s)^m)$
    \item $\dd \gets \fl\left( (\nn)^{\frac{1}{m}}\right)$
\end{enumerate} 
\end{boxedminipage}

\bigskip

\begin{proposition}[Guarantees for $\dispec$]
\label{prop:guaranteefordispec}
Let $\rad >0$ and assume that $s \in D(0, C\|\hess\|)$. Then, the algorithm $\dispec$ runs in $$T_{\dispec}(n, m):= T_{\comptau{}}(n, m)+ T_{\rt}(m, 10^{-3}) = O(mn^2+m\log m) $$ 
arithmetic operations and satisfies its guarantees provided that
    \begin{align}
    \label{assum:dispec}
        \mach & \leq \mach_{\dispec}(n,m,C,\|H\|,\kappa_V(H),\dist(s,\Spec{H}))
        \\ & :=  \frac{1}{\croot} \mach_{\comptau{}}(n,m,C,\|H\|,\kappa_V(H),\dist(s,\Spec{H})). \nonumber
    \end{align}
\end{proposition}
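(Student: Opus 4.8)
The plan is to analyze $\dispec$ by tracking how $\comptau{m}$ computes $\tau_{(z-s)^m}(H)^m = \|e_n^*(s-H)^{-m}\|^{-1}$ and then taking $m$-th roots, combining a finite-arithmetic error bound with the approximate functional calculus of Lemma \ref{lem:spectral-measure-apx}. First I would establish the \emph{exact-arithmetic} version of the two-sided bound: writing $f(z) = (s-z)^{-m}$, Lemma \ref{lem:spectral-measure-apx} gives
\[
\frac{\|e_n^* f(H)\|}{\kappa_V(H)} \le \E[|f(Z_H)|^2]^{1/2} \le \kappa_V(H)\|e_n^* f(H)\|,
\]
and since $\E[|f(Z_H)|^2] = \sum_i \P[Z_H = \lambda_i]\,|s-\lambda_i|^{-2m}$, the dominant term corresponds to the eigenvalue(s) nearest $s$, yielding
\[
\ds{s}{H}^{-m} \ge \E[|f(Z_H)|^2]^{1/2} \ge \P[|Z_H - s| = \ds{s}{H}]^{1/2}\,\ds{s}{H}^{-m}.
\]
Feeding these into the Lemma \ref{lem:spectral-measure-apx} sandwich and inverting (recall $\tau_{(z-s)^m}(H)^m = \|e_n^* f(H)\|^{-1}$) gives, in exact arithmetic,
\[
\frac{1}{\kappa_V(H)}\,\ds{s}{H}^m \le \tau_{(z-s)^m}(H)^m \le \frac{\kappa_V(H)}{\P[|Z_H - s| = \ds{s}{H}]^{1/2}}\,\ds{s}{H}^m.
\]

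Next I would account for roundoff. The hypothesis \eqref{assum:dispec} is $\mach \le \mach_{\dispec} = \mach_{\comptau{}}/\croot$, which in particular guarantees $\mach \le \mach_{\comptau{}}$, so Lemma \ref{lem:guaranteetaum} applies: the quantity $\nn$ returned by step 1 satisfies $|\nn - \tau_{(z-s)^m}(H)^m| \le 0.001\,\tau_{(z-s)^m}(H)^m$. Then step 2 computes $\dd = \fl(\nn^{1/m})$; since $\mach \le \mach_{\comptau{}}/\croot \le 1/(2m\croot)$ one checks the precondition $m\croot\mach \le \epsilon := 10^{-3} \le 1/2$ of Lemma \ref{lem:mthroot}, so the $m$-th root is computed with relative error at most $10^{-3}$. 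Composing the two relative errors: $\dd$ equals $\tau_{(z-s)^m}(H)$ up to a multiplicative factor in $[(0.999)^{1/m}(1-10^{-3}),\,(1.001)^{1/m}(1+10^{-3})] \subseteq [0.998, 1.003]$ (using $m \ge 1$ so that $(1.001)^{1/m} \le 1.001$, etc.). Multiplying this by the exact-arithmetic bound and absorbing the $\kappa_V(H)^{\pm m}$ into $\kappa_V(H)^{\pm 1}$ after the $m$-th root — i.e. $\kappa_V(H)^{1/m}$ — yields exactly the \textbf{Ensures} inequality of the $\dispec$ box.

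For the running time, step 1 costs $T_{\comptau{}}(n,m) = O(mn^2)$ by Lemma \ref{lem:guaranteetaum} and step 2 costs $T_{\rt}(m, 10^{-3}) = \Croot m\log(m\log(10^3)) = O(m\log m)$ by Lemma \ref{lem:mthroot}, for a total of $O(mn^2 + m\log m)$ as claimed; note $s \in D(0, C\|H\|)$ is exactly the hypothesis needed to invoke Lemma \ref{lem:guaranteetaum} with this constant $C$. The main obstacle I anticipate is purely bookkeeping rather than conceptual: carefully propagating the $0.001$ multiplicative error from $\comptau{m}$ (which is stated on the $m$-th power $\nn \approx \tau^m$) together with the $10^{-3}$ relative error of the $m$-th root extraction, and verifying that after taking the $m$-th root the product of these factors stays within the stated $[0.998, 1.003]$ window uniformly in $m \ge 1$ — this needs the elementary inequalities $(1-x)^{1/m} \ge 1-x$ and $(1+x)^{1/m} \le 1+x$ for $x \in (0,1)$, $m \ge 1$, plus a small amount of slack. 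A secondary point to be careful about is that \eqref{assum:dispec} with the extra factor $1/\croot$ is what simultaneously licenses \emph{both} Lemma \ref{lem:guaranteetaum} (needs $\mach \le \mach_{\comptau{}}$) \emph{and} the root-extraction precondition of Lemma \ref{lem:mthroot} (needs $m\croot\mach \le 10^{-3}$), since $\mach_{\comptau{}} \le 1$ forces $\mach_{\comptau{}}/\croot \le 1/\croot$ and one should check $\mach_{\comptau{}}/\croot$ is small enough — here it is convenient that $\mach_{\comptau{}} \le 10^{-3}$ already, so dividing by $\croot \ge 1$ suffices.
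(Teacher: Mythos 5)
Your proposal matches the paper's proof almost line for line: both apply Lemma \ref{lem:spectral-measure-apx} to sandwich $\tau_{(z-s)^m}(H)$ between $\ds{s}{H}/\kappa_V(H)^{1/m}$ and $\kappa_V(H)^{1/m}\ds{s}{H}/\P[|Z_H-s|=\ds{s}{H}]^{1/(2m)}$, then invoke Lemma \ref{lem:guaranteetaum} for the $0.001$-relative accuracy of $\nn$ and Lemma \ref{lem:mthroot} for the $10^{-3}$-relative accuracy of the root, and compose the multiplicative errors into the $[0.998, 1.003]$ window. You are in fact slightly more careful than the paper in explicitly noting that the $1/\croot$ factor must also license the precondition $m\croot\mach \le 10^{-3}$ of Lemma \ref{lem:mthroot}; for completeness one should observe that this follows not merely from $\croot \ge 1$ (which does not kill the factor of $m$) but from the $\left(\dist(s,\Spec H)/((2+2C)\|H\|)\right)^{2m} \le 4^{-m}$ decay built into $\mach_{\comptau{}}$, which dominates the linear growth in $m$.
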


\begin{proof} First note that  
\begin{align}
\tau_{(z-s)^m}(\hess) & = \|e_n^* (\hess-s)^{-m}\|^{-\frac{1}{m}}  \nonumber
\\ & \le \frac{\kappa_V(H)^{\frac{1}{m}}}{\E\left[|Z_H-r|^{-2m}\right]^{\frac{1}{2m}}} \nonumber && \text{Lemma \ref{lem:spectral-measure-apx}} \nonumber \\ \label{eq:boundontau}
        &\le \frac{\kappa_V(H)^{\frac{1}{m}}\ds{r}{H}}{\P\Big[|Z_H-r| =   \ds{r}{H}\Big]^{\frac{1}{2m}}}.
\end{align}
Similarly, to lower bound  $\tau_{(z-s)^m}(\hess)$ use Lemma \ref{lem:spectral-measure-apx} again to obtain 
$$ \tau_{(z-s)^m}(\hess)= \|e_n^* (\hess-s)^{-m}\|^{-1/m} \geq  \frac{1}{\kappa_V(\hess)^{\frac{1}{m}}\E[|Z_\hess-s|^{-2m}]^{\frac{1}{2m}}}\geq \frac{\ds{s}{\hess}}{\kappa_V(\hess)^{\frac{1}{m}}}.$$
So, it only remains to control $|\tau-\tau_{(z-s)^m}(H)|$, where $\tau$ is the output of $\dispec$. Since by assumption (\ref{assum:dispec}) holds, we can apply Lemma \ref{lem:guaranteetaum} to get
\begin{equation*}
\label{eq:intermediateestimation}
0.999 \tau_{(z-s)^m}(\hess)^m \leq \nn \leq 1.001\tau_{(z-s)^m}(\hess)^m.
\end{equation*}
Similarly, we can apply Lemma \ref{lem:mthroot} to get that $\fl((\nn)^{\frac{1}{m}})$ can be computed to relative accuracy $\epsilon = 10^{-3}$, using at most $T_{\rt}(m, 10^{-3})$ arithmetic operations. Hence
$$0.999 (\nn)^{\frac{1}{m}}  \leq \fl((\nn)^{\frac{1}{m}}) \leq 1.001 (\nn)^{\frac{1}{m}},$$
which combined with all of the above yields the advertised guarantees. To compute the final running time, add to  $T_{\rt}(m, 10^{-3})$ the $T_{\comptau{}}(n,m)$ arithmetic operations needed to compute $\comptau{m}$. 
\end{proof}

\subsection{Analysis of $\oneig$}
\label{sec:oneig}

For every $s\in \bC$ and $\tau>0$, on the annulus $\calA_{s, \tau} =\{ z \in \bC : 0.9  \tau \leq  |z-s|  \leq 1.12  \tau \}$  we will define  the set $\calN_{s, \tau}$ of six points given  by
$$\calN_{s, \tau} := \left\{ s+ \tau e^{i \pi  \ell/3} : \ell=1, \dots, 6 \right\}.$$
As explained in Section \ref{sec:introoverview},  at time $t$,  $\oneig$  will call $\dispec$ on the the locations given by the points in a net on $\calA_{s_t, \tau_t}$ for some $s_t$ and $\tau_t$. So, to give accuracy guarantees on the output provided by $\dispec$, we will choose the net to be  the randomly perturbed set  $$\chn_{s_t, \tau_t} := \{s_t+w: s_t\in \calN_{s_t, \tau_t} \}, \quad \text{where}\quad w\sim \unif(D(0, \pretolr)),$$   (cf. the discussion on shift regularization in Section \ref{sec:imported}).

We begin by noting that for any $s\in \bC$ and $\tau>0$,  $\chn_{s, \tau}$ is a net on $\calA_{s, \tau}$ in the following sense. 

\begin{observation}
\label{obs:scalereduction}
Using the above notation, if $\pretolr  \leq .03 \tau$ then for any realization of $\chn_{s, \tau}$ we have  $$\sup_{z\in \calA_{s, \tau}}\mathrm{dist}\big(z, \chn_{s, \tau}\big)\leq 0.6 \tau.$$
\end{observation}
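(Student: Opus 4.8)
The plan is to prove the bound by a direct geometric estimate: fix any $z$ in the annulus $\calA_{s,\tau}$ and show that it is within $0.6\tau$ of some point of the unperturbed net $\calN_{s,\tau}$, and then absorb the perturbation $w$ of size at most $0.03\tau$ into the slack. The key point is that $\calN_{s,\tau}$ consists of six equally spaced points on the circle of radius $\tau$ centered at $s$, so the angular separation between consecutive net points (as seen from $s$) is $\pi/3 = 60^\circ$. For any $z\in\calA_{s,\tau}$ write $z = s + \rho e^{i\theta}$ with $0.9\tau \le \rho \le 1.12\tau$, and let $s + \tau e^{i\pi\ell/3}$ be the net point whose angle $\pi\ell/3$ is closest to $\theta$; then $|\theta - \pi\ell/3| \le \pi/6$.

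Next I would bound $|z - (s+\tau e^{i\pi\ell/3})| = |\rho e^{i\theta} - \tau e^{i\pi\ell/3}|$ using the triangle inequality in two pieces: a radial part $|\rho - \tau| \le 0.12\tau$ and an angular part $\tau\,|e^{i\theta} - e^{i\pi\ell/3}| = 2\tau\,|\sin((\theta-\pi\ell/3)/2)| \le 2\tau\sin(\pi/12)$. Since $\sin(\pi/12) = \sin 15^\circ \approx 0.2588$, the angular part is at most roughly $0.518\tau$, and adding the radial part gives $|z - (s+\tau e^{i\pi\ell/3})| \le 0.12\tau + 0.518\tau < 0.64\tau$. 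Hmm — that slightly exceeds $0.6\tau$, so I would instead be a bit more careful: estimate $|\rho e^{i\theta} - \tau e^{i\pi\ell/3}|^2 = \rho^2 + \tau^2 - 2\rho\tau\cos(\theta - \pi\ell/3)$ and maximize over $\rho\in[0.9\tau,1.12\tau]$ and $|\theta-\pi\ell/3|\le\pi/6$; the worst case is at an endpoint of the $\rho$-interval with $|\theta-\pi\ell/3| = \pi/6$, giving $\rho^2 + \tau^2 - 2\rho\tau\cos(30^\circ) = \rho^2 + \tau^2 - \sqrt3\,\rho\tau$. Plugging $\rho = 1.12\tau$ gives $(1.2544 + 1 - 1.939)\tau^2 \approx 0.315\tau^2$, i.e. distance $\approx 0.561\tau$; plugging $\rho = 0.9\tau$ gives $(0.81 + 1 - 1.559)\tau^2 \approx 0.251\tau^2$, i.e. distance $\approx 0.501\tau$. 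So $\dist(z,\calN_{s,\tau}) \le 0.57\tau$ comfortably.

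Finally, for the perturbed net $\chn_{s,\tau}$, every point is within $\pretolr \le 0.03\tau$ of the corresponding point of $\calN_{s,\tau}$, so by the triangle inequality $\dist(z,\chn_{s,\tau}) \le \dist(z,\calN_{s,\tau}) + \pretolr \le 0.57\tau + 0.03\tau = 0.6\tau$, and taking the supremum over $z\in\calA_{s,\tau}$ completes the proof. The only mildly delicate step is verifying the numerical inequality $\max_{\rho,\theta}|\rho e^{i\theta} - \tau e^{i\pi\ell/3}| \le 0.57\tau$; this is where the choices of the constants $0.9$, $1.12$ in the annulus and the six-point net are calibrated, and one should check that the maximum of $g(\rho) = \rho^2 - \sqrt3\,\rho\tau + \tau^2$ over the relevant $\rho$-interval is attained at $\rho = 1.12\tau$ (since $g$ is decreasing for $\rho < \frac{\sqrt3}{2}\tau$ and increasing after, and $1.12\tau$ is farther from $\frac{\sqrt3}{2}\tau \approx 0.866\tau$ than $0.9\tau$ is), which I would state as the one computation worth spelling out.
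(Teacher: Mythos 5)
Your proof is correct and follows essentially the same route as the paper: bound $\dist(z,\calN_{s,\tau})$ by $0.57\tau$ via the geometry of the six-point net and the annulus, then absorb the $|w|\le\pretolr\le 0.03\tau$ perturbation with the triangle inequality. The paper simply asserts the first step as ``basic trigonometry''; your law-of-cosines computation (correctly identifying $\rho=1.12\tau$, $|\theta-\pi\ell/3|=\pi/6$ as the worst case, giving $\approx 0.561\tau$) is a valid way to make that step explicit.
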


\begin{proof}
Basic trigonometry shows that because $z\in \calA_{s, \tau}$ we can guarantee $\mathrm{dist}(z, \calN_{s, \tau})\leq .57 \tau.$ Then, because any realization of $w\sim D(0, \pretolr)$ (which yields a realization of $\chn_{s, \tau}$) satisfies $|w| \leq \pretolr  \leq .03 \tau$, the result follows from the triangle inequality. 
\end{proof}

We can now define the algorithm. 
\bigskip

\begin{boxedminipage}{\textwidth}
$$\oneig$$
    \textbf{Input:} $\hess\in \bC^{n\times n}$ Hessenberg, accuracy $\fward>0$, failure probability tolerance $\wfail$, eigenvalue mass lower bound $p$     \\
    \textbf{Global Data:} Norm bound $\scale$,  pseudospectral parameter $\epsilon$, shattering parameter $\shat$ \\
    \textbf{Output:} $[\ax{\lambda}, \cor]$ with $\ax{\lambda}\in \bC$ and $\cor \in \{\true, \false\}$   \\
    \textbf{Requires:} $\fward \leq 1/2$, $\Lambda_\epsilon(\hess)$ is $\shat$-shattered, $\P[Z_{\hess} = \lambda ]\geq p$ for all $\lambda \in \Spec{\hess}$, $ 10 \fward  \leq \|\hess\| \leq 2 \scale   $ \\ 
    \textbf{Ensures:} With probability at least $1-\wfail$, $\oneig$ terminates successfully, that is $\cor =\true$ and $\ax{\lambda}$ satisfies   $ \tolr \leq \ds{\ax{\lambda}}{\hess} \leq  \fward $, where $\tolr$ is defined in line \ref{line:mandeta}
\begin{enumerate}
\item \label{line:mandeta} $m \gets \left\lceil 12 \left( \log\left(\frac{n\shat}{\epsilon}\right) + \frac{1}{2} \log\left(\frac{1}{p}\right) \right) \right\rceil$,  $\pretolr\gets \frac{\fward}{5}\wedge \frac{\shat}{3}$, $\tolr \gets \pretolr  \left( \frac{\wfail}{12 \log(3\scale/10\fward)} \right)^{1/2}$
    \item \label{line:oneiginitialization} $w\sim \unif(D(0, \pretolr )), \, \chs \gets \hess_{nn}+w, \, \dd \gets \dispec(\chs , \hess, m)$
    \item \label{line:oneigwhileloop} \textbf{While} $\dd > 0.9 \fward $ 
   
    \begin{enumerate}
    \item     $w\sim \unif(D(0, \pretolr  ))$,    $\, \chn \gets \{\chs^{(1)}, \dots, \chs^{(6)}\}=  \calN_{\chs, \tau}+w$ 
    \item $\tau' \gets \min_{j \in [6]} \dispec(\chs^{(j)},H,m)$
    \item \textbf{If} $\tau' \le 0.66\tau$\\
     $\chs \gets \chs^{(j)}$, $\tau \gets \tau'$, $\cor \gets \true$
    \item \textbf{Else} $\cor \gets \false$, terminate $\oneig$ and output $[\chs, \false]$.

    \end{enumerate}     
     \item $\ax{\lambda} \gets \chs$, output $[\ax{\lambda}, \true]$
\end{enumerate} 
\end{boxedminipage}
\bigskip

\begin{remark}[About the $\cor$ Flag]
Although small, there is a positive probability that while running $\oneig$ the subroutine $\dispec$ is called on a complex number $s\in \bC$ for which $\ds{s}{H} < \tolr$. When this  happens there will be no guarantee that the output of $\dispec$ is relatively accurate, and the information provided by it might be misleading, giving rise to an update of $\chs$ for which the distance to $\Spec{H}$ might be even larger than what it was for its previous value.  In view of this, the purpose of the flag $\cor$
is to identify when as a consequence of an inaccurate output of $\dispec$ it is no longer possible to decrease the variable $\dd$ at a geometric rate, in which case the algorithm halts and outputs $\error$\footnote{Of course, one could try to formulate a dichotomy as in \cite{banks2022II} in which one leverages that errors can only be made once the shifts that are being used are very close to $\Spec{H}$, and have a mechanism that outputs a forward approximate eigenvalue even when $\dispec$ provides inaccurate answers. Since this proved to be intricate, for the sake of clarity we have decided to settle for this simpler, but efficient enough, version of the algorithm.}.
\end{remark}

Before proving the main result about $\oneig$, we observe that in line \ref{line:mandeta} of this algorithm, $m$ is set so that $\dispec(s, H, m)$ will yield an accurate approximation of $\ds{s}{\hess}$ all throughout the iteration (provided that $s$ is not too close to $\Spec{H}$). 

\begin{observation}[$m$ is large enough]
\label{obs:gooddistapprox}
Let $\rad>0$, $s\in D(0, \rad \|\hess\|)$ and $m$ be as in line \ref{line:mandeta} of $\oneig$. Assume that the requirements of $\oneig$ are satisfied and that
\begin{equation}
\label{eq:assumdispecforoneig}
\mach \leq \mach_{\dispec}(n,m,C,\|H\|,\kappa_V(H),\dist(s,\Spec{H})).
\end{equation}
 Then 
$$0.9 \ds{s}{\hess} \leq \dispec(\hess, s, m) \leq 1.1 \ds{s}{\hess}.$$
\end{observation}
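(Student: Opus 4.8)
The plan is to combine the two–sided estimate from Proposition~\ref{prop:guaranteefordispec} with the specific choice of $m$ made in line~\ref{line:mandeta}. First I would check that the proposition applies to $\dd := \dispec(\hess, s, m)$: the hypothesis $s\in D(0,C\|\hess\|)$ is assumed in the statement, and the precision hypothesis \eqref{eq:assumdispecforoneig} is literally the condition \eqref{assum:dispec} that Proposition~\ref{prop:guaranteefordispec} requires. The proposition then gives
\[
  \frac{0.998}{\kappa_V(\hess)^{1/m}}\,\ds{s}{\hess}
  \;\le\; \dd \;\le\;
  \frac{1.003\,\kappa_V(\hess)^{1/m}}{\P\big[|Z_\hess-s|=\ds{s}{\hess}\big]^{1/2m}}\,\ds{s}{\hess},
\]
so everything reduces to showing that the two correction factors $\kappa_V(\hess)^{1/m}$ and $\P\big[|Z_\hess-s|=\ds{s}{\hess}\big]^{-1/2m}$ are each within roughly a factor $1.09$ of~$1$.

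Second, I would control these factors using the \textbf{Requires} clause of $\oneig$. Since $\Lambda_\epsilon(\hess)$ is $\shat$-shattered, Lemma~\ref{lem:kappavfromshattering} gives $1\le\kappa_V(\hess)\le n\shat/\epsilon$. Since $\Spec\hess$ is finite, $\ds{s}{\hess}$ is attained at some eigenvalue $\lambda_i$, and the hypothesis $\P[Z_\hess=\lambda]\ge p$ for all $\lambda\in\Spec\hess$ then yields $\P\big[|Z_\hess-s|=\ds{s}{\hess}\big]\ge\P[Z_\hess=\lambda_i]\ge p$; moreover $p\le 1$ because these masses sum to one. In particular $\log(n\shat/\epsilon)\ge 0$ and $\log(1/p)\ge 0$, so the logarithms appearing below are nonnegative.

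Finally, I would insert $m\ge 12\big(\log(n\shat/\epsilon)+\tfrac12\log(1/p)\big)$. The one genuine point is to bound the two factors \emph{together} against the single budget $1/m$: writing $a=\log(n\shat/\epsilon)$ and $b=\log(1/p)$,
\[
  \frac1m\log\kappa_V(\hess) + \frac1{2m}\log\frac{1}{\P\big[|Z_\hess-s|=\ds{s}{\hess}\big]}
  \;\le\; \frac{a+b/2}{m}\;\le\;\frac{a+b/2}{12(a+b/2)}\;=\;\frac1{12},
\]
so the upper correction factor is at most $e^{1/12}<1.09$, giving $\dd\le 1.003\,e^{1/12}\,\ds{s}{\hess}<1.1\,\ds{s}{\hess}$. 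Separately $\kappa_V(\hess)^{1/m}=e^{a/m}\le e^{1/12}$, hence $\dd\ge 0.998\,e^{-1/12}\,\ds{s}{\hess}>0.9\,\ds{s}{\hess}$, and the observation follows. The main (minor) obstacle is precisely this bookkeeping: bounding $\kappa_V(\hess)^{1/m}$ and the mass factor separately would only produce a factor $e^{1/6}>1.18$, which is too weak for the claimed constant $1.1$ — one has to group them against $1/m$ and use $a,b\ge 0$ for the chosen constants to close.
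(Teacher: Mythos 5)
Your proposal is correct and follows the same route as the paper: invoke Proposition~\ref{prop:guaranteefordispec}, then use the shattering hypothesis via Lemma~\ref{lem:kappavfromshattering} to bound $\kappa_V(\hess)\le n\shat/\epsilon$ and the mass hypothesis to bound $\P\big[|Z_\hess-s|=\ds{s}{\hess}\big]\ge p$, and finally check that $m\ge 12\big(\log(n\shat/\epsilon)+\tfrac12\log(1/p)\big)$ makes both correction factors small enough. The paper phrases the last step as two explicit lower bounds on $m$ (with denominators $\log(0.998/0.9)\approx 0.103$ and $\log(1.1/1.003)\approx 0.092$, so $m\ge 10.83\big(\log\kappa_V+\tfrac12\log(1/\P)\big)$ suffices and $12$ covers it), whereas you pass to exponentials and the cruder but cleaner bound $e^{1/12}<1.09$; this is an equivalent bookkeeping choice, not a different argument. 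Your closing remark — that bounding $\kappa_V^{1/m}$ and the mass factor each by $e^{1/12}$ and multiplying gives only $1.003\,e^{1/6}\approx 1.18>1.1$, so one must bound the product $\kappa_V^{1/m}\,\P^{-1/2m}$ against the single budget $m/12$ — is a correct and worthwhile observation about why the combined quantity $\log(n\shat/\epsilon)+\tfrac12\log(1/p)$ appears in the definition of $m$ rather than, say, $\max$ of the two terms.
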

\begin{proof}
Let $\dd = \dispec(\hess, s, m)$.  Since $\mach\leq \mach_{\dispec}$ we can apply Proposition \ref{prop:guaranteefordispec} to get  
$$ \frac{0.998}{\kappa_V(\hess)^{\frac{1}{m}}} \ds{s}{\hess} \leq \dd \leq  \frac{1.003 \kappa_V(\hess)^{\frac{1}{m}} \ds{s}{\hess}}{\P\big[|Z_\hess-s|=\ds{s}{\hess}\big]^{\frac{1}{2m}}} .$$
Then, it suffices to show that $$0.9 \leq \frac{0.998}{\kappa_V(\hess)^{\frac{1}{m}}} \quad \text{and} \quad  \frac{1.003 \kappa_V(\hess)^{\frac{1}{m}} }{\P\big[|Z_\hess-s|=\ds{s}{\hess}\big]^{\frac{1}{2m}}}\leq 1.1,$$
or equivalently
$$m \geq \frac{\log(\kappa_V(\hess))}{\log\left(0.998/0.9\right)} \quad \text{ and } \quad  m \geq \frac{\log(\kappa_V(\hess))+ \frac{1}{2} \log(1/ \P\big[|Z_\hess-s|=\ds{s}{\hess}\big])}{\log(1.1/1.003)}.$$
Finally, using that 
$$\P\big[|Z_\hess-s|=\ds{s}{\hess}\big] \geq \min_{\lambda \in \Spec{H}} \P[Z_\hess=\lambda]\geq p$$ and $\kappa_V(\hess) \leq \frac{n\shat}{\epsilon}$ (which follows from Lemma \ref{lem:kappavfromshattering}), it is clear that this $m$ satisfies the above inequalities. 
\end{proof}

Now we observe that in line \ref{line:mandeta} of $\oneig$, the parameters $\tolr$ and $\pretolr$ are set to be small enough that we can apply Lemma \ref{lem:fixguarantee1}. 

\begin{observation}
\label{obs:etasaresmall}
Let $\tolr, \pretolr$ be as in line \ref{line:mandeta} and assume that the requirements of $\oneig$ are satisfied. Then 
$$\tolr+\pretolr \leq \frac{\gap(H)}{2} \quad \text{and} \quad \pretolr \leq 0.02 \|H\|. $$
\end{observation}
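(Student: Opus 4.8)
The plan is to check the two inequalities separately, in each case unwinding the definitions of $\tolr$ and $\pretolr$ from line~\ref{line:mandeta} and feeding in the hypotheses collected in the \textbf{Requires} clause of $\oneig$, together with Lemma~\ref{lem:kappavfromshattering}.

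For $\pretolr \le 0.02\|H\|$ I would argue directly. Since $\pretolr = \frac{\fward}{5}\wedge\frac{\shat}{3} \le \frac{\fward}{5}$ and the requirement $10\fward \le \|H\|$ gives $\fward \le \|H\|/10$, we obtain $\pretolr \le \|H\|/50 = 0.02\|H\|$. This step is immediate.

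For $\tolr + \pretolr \le \gap(H)/2$ the strategy is to show that the multiplicative correction factor $\left(\frac{\wfail}{12\log(3\scale/10\fward)}\right)^{1/2}$ in the definition of $\tolr$ is at most $1/2$, so that $\tolr \le \pretolr/2$; then, using the \emph{other} branch $\pretolr \le \shat/3$ of the minimum together with $\gap(H) \ge \shat$ from Lemma~\ref{lem:kappavfromshattering}(ii), we get $\tolr + \pretolr \le \frac{3}{2}\pretolr \le \frac{\shat}{2} \le \frac{\gap(H)}{2}$. To bound the correction factor I would first verify that the logarithm is positive and not too small: the requirements $\|H\| \le 2\scale$ and $10\fward \le \|H\|$ give $\scale \ge \|H\|/2 \ge 5\fward$, hence $3\scale/(10\fward) \ge 3/2$ and therefore $12\log(3\scale/10\fward) \ge 12\log(3/2) \ge 4$. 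Since $\wfail \le 1$ (being a failure-probability tolerance), the correction factor is then at most $(1/4)^{1/2} = 1/2$, as needed.

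The one subtlety worth flagging is that $\tolr$ is only meaningful once $3\scale/(10\fward) > 1$, so that the logarithm and the square root are positive; this is precisely what the chain $\scale \ge \|H\|/2 \ge 5\fward$ provides. Beyond that the proof is routine arithmetic, so I do not expect any genuine obstacle.
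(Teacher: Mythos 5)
Your proof is correct and follows essentially the same route as the paper's. The only difference is that you explicitly verify the inequality $\tolr \leq \pretolr/2$ by bounding the correction factor $\left(\tfrac{\wfail}{12\log(3\scale/10\fward)}\right)^{1/2}$ using $\scale \geq 5\fward$ and $\wfail \leq 1$, whereas the paper simply asserts $2\tolr \leq \pretolr$ "by definition of the parameters" without spelling this out (and incidentally writes $\tolr \leq \fward/5$ where it means $\pretolr \leq \fward/5$); your version fills in that gap cleanly.
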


\begin{proof}
Since $\Lambda_\epsilon(H)$ is $\shat$-shattered we have $\shat \leq \gap(H)$, and by definition of the parameters we have $2\tolr \leq \pretolr \leq \shat/3  $, from where $\tolr+\pretolr \leq \gap(H)/2$. To prove the other assertion, note that the requirements of $\oneig$ imply that $\fward \leq 0.1 \|H\|$, on the other hand by definition $\tolr \leq \fward/5$, so the proof is concluded by combining both bounds. 
\end{proof}

We  now state  the main result of this section. 

 \begin{proposition}[Guarantees for $\oneig$]
 \label{prop:findone}
 Assume that the requirements of $\oneig$ are satisfied, let $m$ and $\tolr$ be as defined in line \ref{line:mandeta} of $\oneig$ and assume  that  
\begin{align}
    \label{assum:oneig}  \mach & \leq \mach_{\oneig}(n,\scale, \epsilon, \shat, p, \beta, \varphi) 
        \\ & := \mach_{\dispec}\big(n,m, 10,2 \scale, n\shat/\epsilon,\tolr\big). \nonumber 
\end{align}
Then, with probability at least $1-\wfail$, $\oneig$ outputs a  $\ax{\lambda}\in \bC$ satisfying 
 \begin{equation}
 \label{eq:lambdaguarantees}
 \tolr \leq \ds{\ax{\lambda}}{\hess} \leq \fward, 
 \end{equation}
 using at most  
 \begin{align*}
 T_{\oneig}(n, \scale, \epsilon, \shat, p, \fward ) & := (6\lceil 2 \log(\scale/5\fward)\rceil+1) T_{\dispec}(n, m)+ \lceil 2 \log(\scale/5\fward)\rceil(\cd + 16)+O(1) 
 \\ & = O\big( \log(\scale/\beta) \log(n\shat/\epsilon p) (n^2+ \log \log(n\shat/\epsilon p)) \big)
 \end{align*}
 arithmetic operations. 
 \end{proposition}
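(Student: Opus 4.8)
The plan is to analyze the \texttt{While} loop of $\oneig$ and show that with high probability every call to $\dispec$ is accurate, the loop makes geometric progress, and it terminates within the advertised number of iterations. I would begin by setting up the probabilistic framework: the only randomness in $\oneig$ (given the input) comes from the draws $w\sim\unif(D(0,\pretolr))$ used to form $\chs$ in line \ref{line:oneiginitialization} and the net $\chn$ in each iteration of the loop. By the termination argument below there are at most $\lceil 2\log(\scale/5\fward)\rceil$ iterations, each contributing one fresh $w$, plus the initial one, so at most $1+\lceil 2\log(\scale/5\fward)\rceil$ relevant draws; rounding up, at most $\lceil 2\log(3\scale/10\fward)\rceil \le 12\log(3\scale/10\fward)/\,(\text{const})$ of them — I would match the denominator $12\log(3\scale/10\fward)$ appearing in the definition of $\tolr$ in line \ref{line:mandeta}. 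For each such draw, Observation \ref{obs:etasaresmall} verifies the hypotheses $\tolr+\pretolr\le\gap(H)/2$ of Lemma \ref{lem:fixguarantee1}, so each perturbed shift $\chs$ (resp.\ each point of $\chn$) satisfies $\ds{\chs}{H}\ge\tolr$ with probability at least $1-(\tolr/\pretolr)^2 = 1-\wfail/(12\log(3\scale/10\fward))$. Here I would use that the net points $\calN_{\chs,\tau}+w$ all inherit the required distance bound from the single shift $w$ via a union/averaging argument, or (cleaner) treat each of the six net points as deterministic offsets of $\chs+w$ and apply the lemma once per iteration with the understanding that $\ds{s}{H}\ge\tolr$ is needed only for the particular point actually selected — in fact we need it for all six to trust the $\min$, so I would take a union bound over the $6$ net points as well, absorbing the factor $6$ by being slightly generous with constants, or note that Lemma \ref{lem:fixguarantee1} applied to the annulus net gives the bound for the whole net simultaneously since a single $w$ shifts all six points and $\gap(H)\ge\shat\ge 3\pretolr$ keeps each net point in a distinct "good region". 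A union bound over all iterations then gives: with probability at least $1-\wfail$, every point ever passed to $\dispec$ is at distance $\ge\tolr$ from $\Spec{H}$.

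Condition on this event. Since $\tolr$ is the value plugged into assumption (\ref{assum:oneig}) via $\mach_{\dispec}(n,m,10,2\scale,n\shat/\epsilon,\tolr)$, and since every queried point lies in $D(0,10\|H\|)$ (the $s_t$'s stay near $\Spec{H}\subset D(0,\|H\|)$, the net points are within $1.12\tau\le$ a constant times $\|H\|$ of them, and one should check $\tau\le O(\|H\|)$ inductively — true since $\tau$ starts at $\approx\ds{\hess_{nn}}{H}\le\|H\|+|\hess_{nn}|\le 2\|H\|$ and only decreases), Observation \ref{obs:gooddistapprox} applies at every call and yields $0.9\ds{s}{H}\le\dispec(s,H,m)\le 1.1\ds{s}{H}$ throughout. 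Here I would also need $\kappa_V(H)\le n\shat/\epsilon$, which is Lemma \ref{lem:kappavfromshattering}(i), to match the argument of $\mach_{\dispec}$, and monotonicity of $\mach_{\comptau{}}$ / $\mach_{\dispec}$ in $\dist(s,\Spec H)$ to reduce the bound at the actual (possibly larger) distance to the bound at $\tolr$.

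Next, the geometric-progress / termination argument, conditioned on the same event. At the start of an iteration we have the current value $\tau$ with (by accuracy of the previous $\dispec$ call) $\tau\le 1.1\ds{\chs}{H}$, and since the loop did not exit, $\tau > 0.9\fward$; combined with accuracy, $\ds{\chs}{H}\ge\tau/1.1 > 0.8\fward > 0$, so in particular $\chs\notin\Spec{H}$ and the annulus $\calA_{\chs,\tau}$ contains an eigenvalue: indeed $\ds{\chs}{H}\in[\tau/1.1,\tau/0.9]\subset[0.9\tau,1.12\tau]$ — wait, $\tau/1.1\approx 0.909\tau$ and $\tau/0.9\approx 1.11\tau$, so the nearest eigenvalue $\lambda^\star$ satisfies $0.9\tau\le|\lambda^\star-\chs|\le 1.12\tau$, i.e.\ $\lambda^\star\in\calA_{\chs,\tau}$. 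By Observation \ref{obs:scalereduction} (whose hypothesis $\pretolr\le 0.03\tau$ holds because $\pretolr\le\fward/5$ and $\tau>0.9\fward$ give $\pretolr/\tau < (1/5)/0.9 < 0.03$... actually $1/4.5\approx 0.222$, so I should instead use $\pretolr\le\shat/3$ and... let me just say: one checks $\pretolr\le 0.03\tau$ directly from $\pretolr\le\fward/5$ and $\tau>0.9\fward$ — hmm that gives $0.222\tau$, not $0.03\tau$; the correct check must use that by the time we are deep in the loop $\fward$ is not the right comparison. I would fix the constant bookkeeping here; the intended statement is that $\pretolr$ is a small enough fraction of the \emph{current} $\tau$, which should be arranged by the choice $\pretolr = \fward/5 \wedge \shat/3$ together with $\tau > 0.9\fward$ giving $\pretolr \le \fward/5 < (\tau/0.9)/5 = \tau/4.5$, and then re-examining Observation \ref{obs:scalereduction} — it likely tolerates $\pretolr\le c\tau$ for a larger constant $c$ than $0.03$ with a correspondingly weaker conclusion still below the $0.66$ threshold. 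I will present it with the constants as they fall out.) So there is a net point $\chs^{(j)}$ with $|\chs^{(j)}-\lambda^\star|\le 0.6\tau$, hence $\ds{\chs^{(j)}}{H}\le 0.6\tau$, hence $\tau' = \min_j\dispec(\chs^{(j)},H,m)\le 1.1\cdot 0.6\tau = 0.66\tau$. Therefore the \texttt{If} branch is always taken (the \texttt{Else} never triggers on this event), so $\cor$ stays \true, and $\tau$ shrinks by a factor $0.66<1$ each iteration. Starting from $\tau_0\le 2\scale$... actually $\tau_0\le 1.1\|H\|\le 2.2\scale$ — I'd track the right constant to get the iteration count $\lceil 2\log(\scale/5\fward)\rceil$ — after that many iterations $\tau\le 0.9\fward$ and the loop exits with $\ax\lambda=\chs$. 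At exit, accuracy of the final $\dispec$ gives $\ds{\ax\lambda}{H}\le \tau/0.9\le\fward$ and, from the good event, $\ds{\ax\lambda}{H}\ge\tolr$ (the last queried point is at distance $\ge\tolr$), establishing (\ref{eq:lambdaguarantees}). The running time is then immediate: at most $1+6\lceil 2\log(\scale/5\fward)\rceil$ calls to $\dispec$ at cost $T_{\dispec}(n,m)$ each, plus $\lceil 2\log(\scale/5\fward)\rceil$ sampling steps at cost $\cd$ and $O(1)$ arithmetic per iteration for the comparisons and the net construction (the $+16$), plus $O(1)$ overhead; substituting $m = O(\log(n\shat/\epsilon p))$ and $T_{\dispec}(n,m) = O(mn^2+m\log m)$ gives the stated $O(\log(\scale/\beta)\log(n\shat/\epsilon p)(n^2+\log\log(n\shat/\epsilon p)))$.

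The main obstacle I anticipate is the constant-chasing in two places: (i) verifying that \emph{every} point passed to $\dispec$ stays inside $D(0,10\|H\|)$ so that assumption (\ref{assum:oneig}) — which was stated with $C=10$ and norm bound $2\scale$ — genuinely licenses Observation \ref{obs:gooddistapprox} at each call (this needs the inductive bound $\tau\le O(\|H\|)$ and the fact that the iterates never wander far from $\Spec{H}$), and (ii) making the $0.9/1.1$ multiplicative slacks from $\dispec$, the $0.6$ net-covering constant, and the $0.66$ acceptance threshold fit together so that the \texttt{Else} branch provably never fires on the good event while the loop still terminates in $\lceil 2\log(\scale/5\fward)\rceil$ steps. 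Everything else — the union bound via Lemma \ref{lem:fixguarantee1}, Observations \ref{obs:gooddistapprox}--\ref{obs:etasaresmall}, and the per-iteration cost — is routine bookkeeping.
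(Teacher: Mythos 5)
Your proposal follows essentially the same structure as the paper's proof: an induction showing all queried shifts stay in $D(0,10\|H\|)$ with accurate $\dispec$ returns (the paper's Observation~\ref{obs:accuracy} and Lemma~\ref{lem:tauaccuracy}), a progress lemma showing the \texttt{Else} branch never fires on the good event and $\tau$ contracts by $0.66$ per step (Lemma~\ref{lem:guaranteeingcor}), a union bound via Lemma~\ref{lem:fixguarantee1} over the $6t+1$ queried points with $t=\lceil 2\log(\scale/5\fward)\rceil$, and then the running-time count. Two bookkeeping points you should nail down rather than hand-wave.

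On the union bound: it must be taken over the $6t+1$ individual target points, not over the $t+1$ draws of $w$. Lemma~\ref{lem:fixguarantee1} is a statement about one deterministic target $s$ and one random offset $w$, so each iteration requires six applications of the lemma (same $w$, six targets), then a union bound; your alternative suggestion that a single application ``gives the bound for the whole net simultaneously'' because $\gap(H)\geq\shat\geq 3\pretolr$ does not work, since the lemma gives no control over correlations among the six events. The factor $6$ is exactly why $\tolr$ carries the $12\log(3\scale/10\fward)$ denominator: the paper closes the argument with $(6t+1)(\tolr/\pretolr)^2 \leq 12\log(3\scale/10\fward)(\tolr/\pretolr)^2 = \wfail$.

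On the hypothesis $\pretolr\leq 0.03\tau$ of Observation~\ref{obs:scalereduction}: your arithmetic is right that $\pretolr\leq\fward/5$ together with the loop invariant $\tau>0.9\fward$ only yields $\pretolr/\tau<1/4.5$, which does not meet the $0.03$ threshold. This is a genuine constant-bookkeeping gap (Lemma~\ref{lem:guaranteeingcor} invokes the observation without explicitly verifying the hypothesis), and it should be resolved, e.g.\ by tightening $\pretolr$ in line~\ref{line:mandeta} to $\fward/40\wedge\shat/3$ (then $\pretolr/\tau<1/36<0.03$ throughout the loop), or by reworking the covering estimate so a larger $\pretolr/\tau$ ratio still gives a $1.1$-inflated covering radius below the acceptance threshold $0.66\tau$. ``Present it with the constants as they fall out'' does not discharge this step; absent the fix, the progress lemma is unproved and the \texttt{Else} branch could fire with non-negligible probability even on the ``good'' event.
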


Since the proof of this proposition requires several steps we will present it in a separate subsection. 

\subsubsection{Proof of Proposition \ref{prop:findone}}

It is clear that the exact arithmetic version of $\oneig$ would satisfy the advertised guarantees. The challenge is in arguing that in finite arithmetic, with high probability, each call to $\dispec$ yields an accurate enough answer, and that the aggregate  roundoff errors and failure probabilities is not too large.  Since $\dispec$ is based on the subroutine $\iqr$,  inaccuracies can only arise when the input  $s\in \bC$ is either too close to $\Spec{H}$ or $|s|$ is too large. This is quantified in the following observation, which we will use repeatedly throughout the proof. 

\begin{observation}[Conditions for accuracy]
\label{obs:accuracy}
 For any $s\in D(0, 10\|H\|)$ with $\ds{s}{H}\geq \tol$ the following guarantee holds
$$0.9 \ds{s}{H} \leq \dispec(H, s, m) \leq 1.1 \ds{s}{H}.$$
\end{observation}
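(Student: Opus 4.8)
The plan is to reduce this observation to Observation~\ref{obs:gooddistapprox} (``$m$ is large enough''), which asserts exactly the two-sided bound $0.9\ds{s}{H}\le \dispec(H,s,m)\le 1.1\ds{s}{H}$ for any $s\in D(0,C\|H\|)$, provided the requirements of $\oneig$ hold, $m$ is as in line~\ref{line:mandeta}, and $\mach\le \mach_{\dispec}(n,m,C,\|H\|,\kappa_V(H),\ds{s}{H})$. Since this observation is invoked inside the proof of Proposition~\ref{prop:findone}, the requirements of $\oneig$, the choice of $m$, and assumption~(\ref{assum:oneig}) are all in force. Thus it suffices to set $C=10$ — so that the hypothesis $s\in D(0,10\|H\|)$ here is precisely ``$s\in D(0,C\|H\|)$'' — and to check that
$$\mach_{\oneig}= \mach_{\dispec}\big(n,m,10,2\scale,\tfrac{n\shat}{\epsilon},\tolr\big)\ \le\ \mach_{\dispec}\big(n,m,10,\|H\|,\kappa_V(H),\ds{s}{H}\big),$$
which together with~(\ref{assum:oneig}) gives the required bound on $\mach$.

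To prove this inequality I would unfold the definition $\mach_{\dispec}=\tfrac1\croot\mach_{\comptau{}}$ and recall from Lemma~\ref{lem:guaranteetaum} that $\mach_{\comptau{}}(n,m,C,\|H\|,\kappa,d)=\tfrac{1}{6\cdot 10^3\,\kappa\,\muqr(n)}\big(\tfrac{d}{(2+2C)\|H\|}\big)^{2m}$. At fixed $n,m,C$ this quantity is nonincreasing in $\kappa$ and nondecreasing in the ratio $d/\|H\|$, so the comparison follows from two elementary facts. First, $\kappa_V(H)\le n\shat/\epsilon$: this is Lemma~\ref{lem:kappavfromshattering}(i), applicable because the requirements of $\oneig$ include that $\Lambda_\epsilon(H)$ is $\shat$-shattered. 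Second, $\tfrac{\tolr}{2\scale}\le \tfrac{\ds{s}{H}}{\|H\|}$, equivalently $\tolr\,\|H\|\le 2\scale\,\ds{s}{H}$; this follows by combining the hypothesis $\ds{s}{H}\ge\tolr$ of the present observation (recall $\tol=\tolr=\eta_1$) with the requirement $\|H\|\le 2\scale$ of $\oneig$, since then $\tolr\,\|H\|\le \tolr\cdot 2\scale\le \ds{s}{H}\cdot 2\scale$.

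Putting the pieces together gives $\mach\le\mach_{\oneig}\le\mach_{\dispec}(n,m,10,\|H\|,\kappa_V(H),\ds{s}{H})$, so Observation~\ref{obs:gooddistapprox} applies with $C=10$ and yields the claim. I do not expect a genuine difficulty here; the only point requiring care is bookkeeping, namely verifying that every parameter substituted in passing from $\mach_{\oneig}$ (which uses the ``global'' upper bounds $2\scale$ for $\|H\|$, $n\shat/\epsilon$ for $\kappa_V(H)$, and the worst-case lower bound $\tolr$ for $\ds{s}{H}$) to the instance-specific $\mach_{\dispec}(n,m,10,\|H\|,\kappa_V(H),\ds{s}{H})$ moves $\mach_{\comptau{}}$ in the correct (nonincreasing) direction — which is exactly what the two inequalities above certify.
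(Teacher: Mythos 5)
Your proposal is correct and follows essentially the same route as the paper's proof: both reduce the claim to Observation \ref{obs:gooddistapprox} with $C=10$ by showing that $\mach_{\oneig}=\mach_{\dispec}(n,m,10,2\scale,n\shat/\epsilon,\tolr)\le\mach_{\dispec}(n,m,10,\|H\|,\kappa_V(H),\ds{s}{H})$, using Lemma \ref{lem:kappavfromshattering} for $\kappa_V(H)\le n\shat/\epsilon$ together with $\|H\|\le 2\scale$ and $\ds{s}{H}\ge\tolr$. Your version merely makes the monotonicity bookkeeping explicit, which the paper leaves implicit.
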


\begin{proof}
Since $\Lambda_\epsilon(H)$ is $\shat$-shattered by assumption,  Lemma \ref{lem:kappavfromshattering} shows that $\kappa_V(H) \leq \frac{n\shat}{\epsilon}$, and using the assumption $\|H\|\leq 2\scale$, we get that (\ref{assum:oneig}) implies $$\mach \leq \mach_{\dispec}\Big(n,m, 10,\|H\|,\kappa_V(H),\tolr\Big).$$ So, for any $s\in D(0, 10 \|H\|)$ with $\ds{s}{H}\geq \tolr$, $\mach$ will satisfy  inequality \eqref{eq:assumdispecforoneig}, which by Observation \ref{obs:gooddistapprox} yields the desired inequalities. 
\end{proof}

Let $s_0, s_1, \dots$ be the values acquired by the variable $\chs$ throughout the algorithm,  $\tau_0, \tau_1, \dots $ be the values acquired by $\tau$, and $w_0, w_1, \dots $ be the values acquired by $w$. We will now show that, by the structure of the algorithm, the only real obstruction to obtaining accuracy is the possibility of the $s_i$ being to close to $\Spec{H}$. 

\begin{lemma}[Accuracy of the $\tau_i$]
\label{lem:tauaccuracy}
Let $t\geq 0$ and assume that $\oneig$ does not terminate in the first $t$ while loops\footnote{Here, terminating in the while loop $t=0$ means that that the first while loop was never started. }, and that $\ds{s_i}{H}\geq \tol$ for all $i=0, \dots, t$. Then, for all $i=0, \dots, t$ we have that
\begin{equation}
\label{eq:guaranteefori}
0.9 \ds{s_i}{H} \leq \tau_i \leq 1.1 \ds{s_i}{H},
\end{equation}
  $s_i\in D(0, 10\|H\|)$, and moreover $\chn_{s_i, \tau_i}\subset D(0, 10\|H\|)$.  
\end{lemma}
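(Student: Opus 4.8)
The plan is to induct on $t$. The base case $t=0$ and the inductive step have the same structure, so I will phrase everything as "assume the conclusion holds for all indices $0,\dots,t-1$ and that the hypotheses hold up to index $t$; deduce it for $t$." The three things to establish at step $i$ are: (a) $s_i \in D(0,10\|H\|)$; (b) the accuracy bound \eqref{eq:guaranteefori}; and (c) the net $\chn_{s_i,\tau_i} \subset D(0,10\|H\|)$. Once (a) holds and $\ds{s_i}{H}\geq \tolr$ is assumed, Observation \ref{obs:accuracy} immediately gives (b), since $\tau_i = \dispec(H, s_i, m)$ by construction (either line \ref{line:oneiginitialization} for $i=0$, or the update $\tau \gets \tau'$ inside the while loop, where $\tau'$ is the minimum over the net of $\dispec$ values, and the minimizing point is itself the new $\chs = s_i$). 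So the real content is (a) and (c).

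For (a): the initial value $s_0 = H_{nn} + w_0$ satisfies $|s_0| \le \|H\| + \pretolr \le \|H\| + 0.02\|H\| < 10\|H\|$ by Observation \ref{obs:etasaresmall}. For $i \ge 1$, $s_i$ is a point of the perturbed net $\chn_{s_{i-1},\tau_{i-1}}$, so $|s_i - s_{i-1}| \le 1.12\,\tau_{i-1} + \pretolr$. By the inductive accuracy bound (b) at $i-1$ we have $\tau_{i-1} \le 1.1\,\ds{s_{i-1}}{H} \le 1.1(|s_{i-1}| + \|H\|)$ (using $\ds{s_{i-1}}{H} \le |s_{i-1}| + \max_j|\lambda_j| \le |s_{i-1}| + \|H\|$), and $\pretolr \le 0.02\|H\|$. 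Actually it is cleaner to argue that the sequence $\tau_i$ is geometrically decreasing: the while loop only continues past step $i-1$ when $\tau' \le 0.66\,\tau_{i-1}$, so $\tau_i \le 0.66\,\tau_{i-1} \le \cdots \le 0.66^{\,i}\tau_0$, and $\tau_0 \le 1.1\,\ds{s_0}{H} \le 1.1(|s_0| + \|H\|) \le 1.1 \cdot 11.02\,\|H\| \le 13\|H\|$ (say). Hence $|s_i - s_{i-1}| \le 1.12\,\tau_{i-1} + \pretolr \le 1.12 \cdot 13\|H\| + 0.02\|H\|$, and summing the geometric series of displacements from $s_0$ gives $|s_i| \le |s_0| + \sum_{j\ge 1} (1.12\,\tau_{j-1} + \pretolr)$. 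The $\pretolr$ terms need care since there could be many of them; but in fact each while loop iteration strictly decreases $\tau$ by a factor $0.66$ while the loop exits once $\tau_i \le 0.9\beta$, so the number of iterations is $O(\log(\scale/\beta))$, finitely many, and $\pretolr \le \beta/5$ is tiny — so $\sum \pretolr \le (\text{number of loops})\cdot \beta/5$, which is $o(\|H\|)$ given $10\beta \le \|H\|$. Putting the bounds together, $|s_i| \le |s_0| + 1.12\sum_{j\ge 0}\tau_j + (\text{loops})\pretolr \le 1.02\|H\| + 1.12 \cdot \frac{13}{1-0.66}\|H\| + \text{small} < 10\|H\|$...

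Here is the one place I expect friction: the crude constant $\frac{1.12 \cdot 13}{0.34} \approx 43$ overshoots $10$. The fix is to be less wasteful — bound $\tau_0$ directly in terms of $\ds{s_0}{H}$ and note $\ds{s_0}{H} \le 2\|H\|$ roughly, or better, observe that once $s_0$ is within $O(\|H\|)$ of the spectrum and each subsequent $\tau_i$ shrinks, the $s_i$ actually stay within a ball of radius $\approx 1.12\,\ds{s_0}{H} \cdot \frac{1}{1-0.66} + |s_0|$ around the origin, and one should track that $\ds{s_i}{H} \le \tau_i/0.9$ so the $s_i$ converge toward a fixed eigenvalue $\lambda^\star$ with $|\lambda^\star| \le \|H\|$; then $|s_i| \le |\lambda^\star| + \ds{s_i}{H} \le \|H\| + \tau_i/0.9 \le \|H\| + \tau_0/0.9 \le \|H\| + 15\|H\|$ — still too big with these constants. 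The genuinely clean route: bound $\tau_0 \le 1.1\,\ds{s_0}{H}$ and $\ds{s_0}{H} \le |s_0 - \lambda_{\text{nearest}}|$ where $s_0 = H_{nn}+w_0$; since $H_{nn}$ is a diagonal entry, $|H_{nn}| \le \|H\|$, and the nearest eigenvalue has modulus $\le\|H\|$, so $\ds{s_0}{H} \le 2\|H\| + \pretolr \le 2.1\|H\|$, giving $\tau_0 \le 2.4\|H\|$; then $|s_i| \le |s_0| + 1.12\sum_{j}\tau_j + \text{small} \le 1.02\|H\| + 1.12\cdot\frac{2.4}{0.34}\|H\| + \text{small} \approx 9\|H\| < 10\|H\|$ — tight but it works, and this is exactly why the threshold in Observation \ref{obs:accuracy} was chosen to be $10\|H\|$ rather than something smaller. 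For (c), once $|s_i| < 10\|H\|$ with a bit of room, every point of $\chn_{s_i,\tau_i}$ is within $1.12\,\tau_i + \pretolr$ of $s_i$, and since $\tau_i \le \tau_0$ and the loop is still running means $\tau_i > 0.9\beta$ isn't an upper bound — rather $\tau_i \le \tau_0 \le 2.4\|H\|$ still, so actually the net points could be far; the honest statement is that $\chn_{s_i,\tau_i} \subset D(0, |s_i| + 1.12\tau_i + \pretolr)$ and we need this $< 10\|H\|$, which forces us to keep the slack in (a) large enough — so in practice one proves the stronger inductive claim $|s_i| + 1.2\,\tau_i \le 9\|H\|$ (or similar) which simultaneously yields (a), (c) for the next step, and feeds the geometric decay. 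I would set up the induction with that combined invariant from the start. The remaining verifications — that the perturbation radii are legitimate (Observation \ref{obs:etasaresmall}), that $m$ is large enough (Observation \ref{obs:gooddistapprox}), and that \eqref{assum:oneig} implies the precision hypothesis of Observation \ref{obs:accuracy} — are all already packaged in the cited observations, so invoking them is routine.
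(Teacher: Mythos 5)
Your approach is essentially the same as the paper's: induct on $i$, use the triangle inequality to bound $|s_i|$ by the telescoping sum of displacements from $s_0$, and sum the geometric series $\tau_j \le 0.66^j\tau_0$ with $\tau_0 \lesssim 2.3\|H\|$ coming from $|s_0|\le 1.02\|H\|$. Two corrections are worth flagging, one conceptual and one of presentation.

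The conceptual one: you repeatedly use $1.12\,\tau_{i-1}$ as the radius of the net $\chn_{s_{i-1},\tau_{i-1}}$ (e.g.\ ``$\chn_{s_i,\tau_i} \subset D(0, |s_i| + 1.12\tau_i + \pretolr)$''). This conflates the \emph{outer radius of the annulus} $\calA_{s,\tau}$, which is $1.12\tau$, with the radius at which the net points actually sit. By definition $\calN_{s,\tau} = \{s + \tau e^{i\pi\ell/3}\}_{\ell=1}^6$, so every net point lies at distance exactly $\tau$ from $s$, and after perturbation at distance at most $\tau + \pretolr$. Using the spurious factor $1.12$ makes the already-tight constants tighter than they need to be. The paper's proof uses $|s_{i+1}-s_i| \le \tau_i + |w_{i+1}|$.

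The presentational one: you bound $\sum_i |w_{i+1}|$ by $(\text{number of loops})\cdot\pretolr$ and then argue this is ``$o(\|H\|)$,'' which requires a slightly delicate calculation (maximizing $x\log(1/x)$ over the allowed range of $\beta/\|H\|$) and is not literally an $o(\cdot)$ bound. The paper sidesteps this cleanly: since the loop is still running, $\tau_i \ge 0.9\beta$, and $\pretolr \le \beta/5$, so $|w_{i+1}| \le \pretolr \le \tau_i/4.5 < 0.3\tau_i$, hence $\tau_i + |w_{i+1}| \le 1.3\tau_i$, and the perturbation terms inherit the same geometric decay as the $\tau_i$. This gives $|s| \le |s_0| + 1.3\sum_j \tau_j \le 1.02\|H\| + 1.3\cdot\frac{2.3}{0.34}\|H\| \le 9.82\|H\| < 10\|H\|$ in one line, with no separate count of loops needed. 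You gesture at a ``combined invariant'' to fix the numerology but do not carry it out; this absorption of $\pretolr$ into $1.3\tau_i$ is the cleanest way to do so.

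Your numbers happen to barely close even with the $1.12$ factor (the corrected estimate $1.02 + 1.12\cdot\frac{2.4}{0.34} + 0.18 \approx 9.1$ is under $10$), so the overall argument is salvageable, but the $1.12$ is a genuine misreading of the definition and should be fixed.
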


\begin{proof}
We proceed by induction. First we will prove the statement for $t=0$. In this case, because of the way $\chs$ is initialized  (see line \ref{line:oneiginitialization} of $\oneig$),  $s_0=\hess_{nn}+w_0$ for $w_0 \sim D(0, \pretolr )$. So, by definition, $|s_0|\leq \|H\|+\pretolr$, and by Observation \ref{obs:etasaresmall} we have $s_0\in D(0, C\|H\|)$  for $C=1.02$. It follows, by Observation \ref{obs:accuracy}, that $\tau_0$ satisfies the inequalities in (\ref{eq:guaranteefori}). Therefore $$\tau_0 \leq 1.1 \ds{s_0}{H} \leq 1.1\cdot 2.02 \|H\| \leq 2.3 \|H\|$$ which we record for later use.  

Now take $k\leq t$ and assume that (\ref{eq:guaranteefori}) holds for $i=0, \dots,k$, we will then show that it also holds for $k+1$. First note that by the assumption that $\oneig$ does not terminate in the first $t$ while loops, we have that $  \tau_{i+1} \leq .66 \tau_i$ and $.9\fward \leq \tau_i$ for all $i=0, \dots, k$. Hence, by construction of the sequence $s_0, s_1, \dots $, for any $s\in \chn_{s_k, \tau_k}$ we can obtain
\begin{align*}
\big|s\big| & \leq |s_0|+ |s_1-s_0|+ \cdots +|s_k-s_{k-1}| +|s-s_{k}|
\\ & \leq |s_0|+ \tau_0+|w_1|+ \cdots +\tau_{k}+|w_{k+1}|  && \text{since }\, s_{i+1}\in \chn_{s_{i}, \tau_{i}}, \, s\in \chn_{s_k, \tau_k}
\\ & \leq |s_0| + 1.3(\tau_0+ \cdots + \tau_{k}) && \tau_i \geq 0.9 \fward\,  \text{ and }\, \pretolr \leq  \frac{\fward}{5}
\\ & \leq |s_0| + 1.3 \cdot 2.3 \|\hess\| (1+ 0.66 + 0.66^2+\cdots ) && \tau_{i+1}\leq 0.66^i \tau_0 \leq 0.66^i 2.3 \|\hess\|
\\ & \leq |s_0| + 8.8\|\hess\|
\\ & \leq 10 \|\hess\| && |s_0| \leq 1.02 \|\hess\|. 
\end{align*}
This proves that $\chn_{s_k,\tau_k }\subset D(0, 10\|H\|)$. So, when $k\leq t-1$ we get get that $s_{k+1}\in D(0, 10\|H\|)$, and because we also know that $\ds{s_{k+1}}{H}\geq \tol$, we can apply Observation \ref{obs:accuracy} to show that (\ref{eq:guaranteefori}) holds for $i=k+1$. 
\end{proof}

In the above lemma we assumed that $\oneig$ did not terminate in the first $t$ calls to the while loop, which tacitly assumes that the the flag $\cor$ was set back to $\true$ in each of those loops. We now show that if $\tau_t$ is sufficiently accurate and the elements in $\chn_{s_t, \tau_t}$ are far enough from $\Spec{H}$, then there is a guarantee that in the while loop $t+1$ the flag $\cor$ will be set back to $\true$. 

\begin{lemma}[Guaranteeing $\cor=\true$]
\label{lem:guaranteeingcor}
Assume that $\ds{s_i}{H}$ for $i=1, \dots, t$  and moreover that  each $s\in \chn_{s_t, \tau_t}$ satisfies that $\ds{s}{H}\geq \tol$. Then
$$\min_{s\in \chn_{s_t, \tau_t}} \dispec(s, H, m) \leq .66 \tau_t,$$
where $m$ is defined as in line \ref{line:mandeta} of $\oneig$. 
\end{lemma}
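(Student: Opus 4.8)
The plan is to exhibit a single point $s^\ast$ of the perturbed net $\chn_{s_t,\tau_t}$ with $\ds{s^\ast}{\hess}\le 0.6\,\tau_t$; since $\dispec$ returns a value within a factor $1.1$ of the true distance whenever its shift is well placed, this gives $\dispec(s^\ast,\hess,m)\le 1.1\cdot 0.6\,\tau_t=0.66\,\tau_t$, and the claimed bound follows because its left-hand side is a minimum over $\chn_{s_t,\tau_t}$. (Throughout I read the hypothesis as the slightly stronger statement needed to invoke Lemma~\ref{lem:tauaccuracy}: that $\oneig$ has not terminated in the first $t$ while loops and $\ds{s_i}{\hess}\ge\tol$ for $i=0,\dots,t$.)

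First I would import the structural facts from Lemma~\ref{lem:tauaccuracy}, which under these hypotheses yields both the two-sided accuracy $0.9\,\ds{s_t}{\hess}\le\tau_t\le 1.1\,\ds{s_t}{\hess}$ and the localization $\chn_{s_t,\tau_t}\subset D(0,10\|\hess\|)$. Letting $\lambda^\ast\in\Spec{\hess}$ be an eigenvalue nearest $s_t$, the accuracy bound gives $|s_t-\lambda^\ast|=\ds{s_t}{\hess}\in[\tau_t/1.1,\ \tau_t/0.9]\subset[0.9\,\tau_t,\ 1.12\,\tau_t]$, so $\lambda^\ast$ lies in the annulus $\calA_{s_t,\tau_t}$. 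Next I would apply Observation~\ref{obs:scalereduction} at the point $z=\lambda^\ast$: its hypothesis $\pretolr\le 0.03\,\tau_t$ holds because we are inside the while loop, so $\tau_t>0.9\,\fward$, while $\pretolr$ is a small constant multiple of $\fward$ by line~\ref{line:mandeta}. This produces $s^\ast\in\chn_{s_t,\tau_t}$ with $|s^\ast-\lambda^\ast|\le 0.6\,\tau_t$, whence $\ds{s^\ast}{\hess}\le|s^\ast-\lambda^\ast|\le 0.6\,\tau_t$.

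To finish, note that $s^\ast\in\chn_{s_t,\tau_t}\subset D(0,10\|\hess\|)$ by the first step and $\ds{s^\ast}{\hess}\ge\tol$ by the second hypothesis of the lemma, so Observation~\ref{obs:accuracy} applies to $s^\ast$ and gives $\dispec(s^\ast,\hess,m)\le 1.1\,\ds{s^\ast}{\hess}\le 0.66\,\tau_t$; since $\min_{s\in\chn_{s_t,\tau_t}}\dispec(s,\hess,m)\le\dispec(s^\ast,\hess,m)$, this is exactly the claim. The only genuinely delicate point — the main obstacle — is the quantitative bookkeeping around the random shift regularization: one must verify that a perturbation of magnitude $\pretolr$ cannot spoil the fineness of the six-point net \emph{relative to the current scale $\tau_t$} (i.e.\ that $\pretolr\le 0.03\,\tau_t$ holds for every $t$ reached while $\dd>0.9\,\fward$), so that the $0.57\,\tau_t$ covering radius of the unperturbed net plus the perturbation stays below the $0.6\,\tau_t$ threshold that the $1.1$-accuracy of $\dispec$ can convert into the decrease factor $0.66$. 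Everything else is a direct chaining of Lemma~\ref{lem:tauaccuracy} with Observations~\ref{obs:scalereduction} and~\ref{obs:accuracy}, together with the trigonometric fact that $\lambda^\ast$ falls in $\calA_{s_t,\tau_t}$.
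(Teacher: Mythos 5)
Your outline is the paper's outline: locate an eigenvalue in $\calA_{s_t,\tau_t}$ from the two-sided accuracy of $\tau_t$, use the net-covering observation to produce a net point $s^\ast$ with $\ds{s^\ast}{\hess}\le 0.6\tau_t$, verify $s^\ast\in D(0,10\|\hess\|)$ and $\ds{s^\ast}{\hess}\ge\tolr$, and conclude via Observation~\ref{obs:accuracy}. You also correctly tighten the hypothesis to what is actually needed to invoke Lemma~\ref{lem:tauaccuracy} (the statement as printed is garbled, and you restate it sensibly).

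However, your check of the hypothesis $\pretolr\le 0.03\,\tau_t$ of Observation~\ref{obs:scalereduction} does not hold up numerically, and this is precisely the step you flag as ``the main obstacle.'' Line~\ref{line:mandeta} sets $\pretolr=\frac{\fward}{5}\wedge\frac{\shat}{3}\le\frac{\fward}{5}$, and the while-loop guard only ensures $\tau_t>0.9\,\fward$, so all one can conclude is $\pretolr<\tau_t/4.5\approx 0.22\,\tau_t$, which is far above the required $0.03\,\tau_t$. If $\pretolr$ really is close to $0.22\,\tau_t$, the perturbed net only covers $\calA_{s_t,\tau_t}$ to radius roughly $(0.57+0.22)\tau_t=0.79\,\tau_t$, and then even a $1.1$-accurate $\dispec$ only gives $\dispec(s^\ast,\hess,m)\le 0.87\,\tau_t$, which misses the $0.66\,\tau_t$ target. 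Interestingly, the paper's own proof invokes Observation~\ref{obs:scalereduction} without ever checking $\pretolr\le 0.03\,\tau$, so you are attempting to repair a gap the authors skipped; but the repair as written is an arithmetic non-sequitur (``$\pretolr$ is a small constant multiple of $\fward$'' does not yield a factor of $0.027$). Closing this step properly would require either a sharper while-loop guard (so that $\tau_t$ stays an order of magnitude above $\pretolr$) or a smaller choice of $\pretolr$ (say $\fward/40$), together with re-balancing the constants $0.57$, $0.6$, $0.66$ and the accuracy factor $1.1$.
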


\begin{proof}
Because $\tau_t$ satisfies (\ref{eq:guaranteefori}) we know that there is at least one eigenvalue of $H$ in $\calA_{s_t, \tau_t}$. By Observation \ref{obs:scalereduction} there is at least one $s\in \chn_{s_{t+1}, \tau_{t+1}}$ for which $\ds{s}{H}\leq 0.6 \tau_t$. Moreover,  by assumption, for such $s$ we know that $\ds{s}{H}\geq \tol$, and by Lemma \ref{lem:tauaccuracy} we also know that $s\in D(0, 10\|H\|)$. Hence Observation \ref{obs:accuracy} implies that
$$\dispec(H, s, m) \leq 1.1 \ds{s}{H} \leq 0.66 \tau_t,$$
as we wanted to show. 
\end{proof}

Lemmas \ref{lem:tauaccuracy} and  \ref{lem:guaranteeingcor} imply that as long as all of the values of $\chs$ and $\chs^{(j)}$ for $j=1, \dots, 6$ satisfy that $\ds{\chs}{H}\geq \tol$ and $\ds{\chs^{(j)}}{H}\geq \tol$, we will have accurate $\tau_i$ and the flag $\cor$ will always be set back to $\true$. We can now conclude the proof. 

\paragraph{Probability of success.} Take $t= \lceil 2 \log(\scale/5\fward)\rceil$,  which is set so that $ 4.6\cdot 0.66^t /0.9 \leq \fward/\Sigma. $ 

For $i=1, \dots, t$ and $j=1, \dots, 6$ let $s_i^{(j)}$ be the value acquired by the variable $\chs^{(j)}$ during the while loop $i$. Using Lemma \ref{lem:fixguarantee1} and taking a union bound we have that the probability that
$$\ds{s_0}{H}\geq \tol\quad \text{and}\quad \ds{s_i^{(j)}}{H}\geq \tol, \quad \forall i\in [t]\,  \forall j\in [6]$$
is at least $1-(6t+1)(\tol/\pretol)^2$. And from the above discussion we know that under this event $\oneig$ will not terminate in the first $t$ while loops with $\cor=\false$, and moreover $\tau_0\leq 2.3\|H\|$  and $\tau_{i+1}\leq .66 \tau_i$. Therefore, because $\|H\|\leq 2\scale$ and the way we have chosen $t$,
$$\tau_t \leq 0.66^{t} \tau_0\leq   0.66^{t} 2.3 \|\hess\|  \leq 0.66^{t} \cdot 4.6 \scale \leq    0.9 \fward.$$
This ensures that the algorithm terminates with $\cor=\true$ sometime in the first $t$ while loops with probability at least $1-(6t+1)(\tolr/\pretolr)^2$. Moreover, when it terminates, say at time $t_0$, we are guaranteed that $\ds{s_{t_0}}{H}\geq \tol$, and because $\tau_{t_0}$ is accurate we have that
$$.9 \ds{s_{t_0}}{H}\leq \tau_{t_0}\leq .9\fward,$$
which implies that $\ds{s_{t_0}}{H}\leq \fward$. 

On the other hand 
$$(6t+1)(\tolr/\pretolr)^2 =(6\lceil 2 \log(\scale/5\fward)\rceil+1)(\tolr/\pretolr)^2\leq 12 \log(3\scale/10\fward) (\tolr/\pretolr)^2 = \varphi,$$
that is, the failure probability is upper bounded by $\varphi$.

\paragraph{Running time.} Finally, we give an upper bound on the running time. First note that each iteration of the while loop calls $\dispec$ six times, draws one sample from $\unif(D(0, \pretolr))$,  and at most other 16 arithmetic operations are done. Since, in the successful event, there are at most $\lceil 2 \log(\scale/5\fward)\rceil$ while loops, this gives us the count of 
$$\lceil 2 \log(\scale/5\fward)\rceil (T_{\dispec}(n, m)+ \cd + 16). $$
Before the while loops $\dispec$ is called once, and other than that at most $O(1)$ operations are done. This yields the advertised result.

\section{Decoupling via Inverse Iteration}
\label{sec:decoupling}

The following results are the basis of the subroutine we use to decouple a Hessenberg matrix once a forward approximate eigenvalue of $H$ is obtained. 

\begin{lemma}[Decoupling in Exact Arithmetic]
\label{lem:exactdecoupling}
Let $s\in \bC$ and $H\in \bC^{n\times n}$ be a Hessenberg matrix. Consider the sequence given by $H_0 := H$ and $H_{{\ell}+1} := R_\ell Q_\ell+s$ for $[Q_\ell, R_\ell] := \qr(H_{\ell}-s)$. Then, for any $m\geq 1$ there is some $1\leq \ell \leq m$ for which
 \begin{equation}
 \label{eq:exactdecoupling}
 |(H_{\ell})_{n, n-1}| \leq  \frac{\kappa_V(H)^{\frac{1}{m}}\ds{s}{H}}{\P\Big[|Z_H-s| =   \ds{s}{H}\Big]^{\frac{1}{2m}}}.
 \end{equation}
\end{lemma}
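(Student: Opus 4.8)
The plan is to connect the shifted QR iterates $H_\ell$ to powers of the inverse of $H-s$, and then use the approximate functional calculus (Lemma \ref{lem:spectral-measure-apx}) to bound the resulting quantity. The key algebraic fact is that if $[Q_\ell,R_\ell]=\qr(H_\ell-s)$ and $H_{\ell+1}=R_\ell Q_\ell+s$, then setting $\hat Q_m = Q_0 Q_1\cdots Q_{m-1}$ and $\hat R_m = R_{m-1}\cdots R_1 R_0$, one has the standard ``QR of a power'' identity $(H-s)^m = \hat Q_m \hat R_m$, i.e. $\hat Q_m \hat R_m$ is the QR factorization of $(H-s)^m$, and $H_m = \hat Q_m^* H \hat Q_m$. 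From the triangular structure of $\hat R_m$, the bottom-right entry satisfies $(\hat R_m)_{nn} = \prod_{\ell=0}^{m-1}(R_\ell)_{nn}$, and since $(H_m)_{n,n-1}$ relates to $(R_{m-1})_{nn}$ together with the subdiagonal of $H_{m-1}$ via the step $H_m = R_{m-1}Q_{m-1}+s$, one gets a telescoping/multiplicative control: the product $\prod_{\ell=1}^m |(H_\ell)_{n,n-1}|$ is comparable to (in fact equals, up to the structure of the Givens rotations) $(\hat R_m)_{nn} = \|e_n^*(H-s)^{-m}\|^{-1} \cdot(\text{something})$. Concretely, the cleanest route is: $(\hat R_m)_{nn}^{-1} = \|e_n^* \hat R_m^{-1}\| = \|e_n^* \hat R_m^{-1}\hat Q_m^*\| = \|e_n^*((H-s)^m)^{-1}\| = \|e_n^*(H-s)^{-m}\|$, using that $e_n^*\hat R_m^{-1}$ is a multiple of $e_n^*$ (triangular inverse) and $\hat Q_m$ is unitary. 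Hence $(\hat R_m)_{nn} = \|e_n^*(H-s)^{-m}\|^{-1} = \tau_{(z-s)^m}(H)^m$ after taking... wait, more precisely $(\hat R_m)_{nn} = \tau_{(z-s)^m}(H)^m$ in the notation of the paper, where $\tau_{(z-s)^m}(H) = \|e_n^*(H-s)^{-m}\|^{-1/m}$.

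Next I would show that $\min_{1\le \ell\le m}|(H_\ell)_{n,n-1}|^{?}$ — actually the right statement is that $\prod_{\ell=1}^m |(H_\ell)_{n,n-1}| \le (\hat R_m)_{nn}$ is not quite it; rather one shows $\prod_{\ell=1}^{m}|(H_\ell)_{n,n-1}| \le \prod_{\ell=0}^{m-1}(R_\ell)_{nn} = (\hat R_m)_{nn}$ isn't dimensionally matching either. The correct bound, which I expect is the heart of the argument, is that the subdiagonal entries shrink multiplicatively: one has $|(H_{\ell+1})_{n,n-1}| = |(H_\ell)_{n,n-1}| \cdot \frac{(R_{\ell+1})_{nn}}{(R_\ell)_{nn}}$ or a similar relation coming from how a single Givens-based QR step acts on the last two rows, which telescopes to give $\min_{1\le\ell\le m}|(H_\ell)_{n,n-1}| \le ((\hat R_m)_{nn})^{1/m}\cdot(\text{bounded factor})$ — essentially because the geometric mean of the ratios is controlled by $(R_{m}/R_0)^{1/m}$-type quantities and $(R_\ell)_{nn}\le \|H-s\|$ while their product is $(\hat R_m)_{nn}$. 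I would then combine this with $(\hat R_m)_{nn}^{1/m} = \tau_{(z-s)^m}(H)$, and finally invoke inequality \eqref{eq:boundontau} (already derived in the proof of Proposition \ref{prop:guaranteefordispec}), namely
$$
\tau_{(z-s)^m}(H) \le \frac{\kappa_V(H)^{1/m}\,\ds{s}{H}}{\P[|Z_H-s|=\ds{s}{H}]^{1/2m}},
$$
which is exactly the right-hand side of \eqref{eq:exactdecoupling}.

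The main obstacle I anticipate is the combinatorial/linear-algebra step that extracts a single index $\ell$ with $|(H_\ell)_{n,n-1}|$ small from the multiplicative identity for the product $\prod_\ell (R_\ell)_{nn} = (\hat R_m)_{nn}$: one must carefully track how one shifted QR step (performed via Givens rotations) relates $(H_{\ell+1})_{n,n-1}$ to $(R_\ell)_{nn}$, $(R_{\ell+1})_{nn}$, and the last subdiagonal of $H_{\ell+1}-s$, and then argue that the minimum over $\ell$ of $|(H_\ell)_{n,n-1}|$ is at most the geometric mean of a bounded-ratio telescoping product, hence at most $(\hat R_m)_{nn}^{1/m}$. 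An alternative that avoids fine Givens bookkeeping: observe directly that $(H_\ell)_{n,n-1}$ and $(R_\ell)_{nn}$ satisfy $|(H_{\ell+1})_{n,n-1}(R_\ell)_{nn}| = |(R_{\ell+1})_{nn}(H_\ell)_{n,n-1}|$ (the ``$bulge$-chasing'' invariant for the last row), so $|(H_\ell)_{n,n-1}|/(R_\ell)_{nn}$ is constant in $\ell$; calling this constant $c$, we get $|(H_\ell)_{n,n-1}| = c\,(R_\ell)_{nn}$, so $\prod_{\ell=0}^{m-1}|(H_{\ell+1})_{n,n-1}|$... again needs index care, but in any case $\min_\ell (R_\ell)_{nn} \le \big(\prod_{\ell=0}^{m-1}(R_\ell)_{nn}\big)^{1/m} = (\hat R_m)_{nn}^{1/m} = \tau_{(z-s)^m}(H)$, and since $c\le 1$ (as $|(H_\ell)_{n,n-1}|\le(R_\ell)_{nn}$, both being built from the same Givens rotation), we conclude $\min_{1\le\ell\le m}|(H_\ell)_{n,n-1}| \le \tau_{(z-s)^m}(H)$, finishing via \eqref{eq:boundontau}. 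Getting this invariant and the constant-$c\le 1$ claim precisely right is where the real work lies; everything else is assembly of already-established identities.
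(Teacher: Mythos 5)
Your overall strategy matches the paper's: express $\prod_{\ell=0}^{m-1}(R_\ell)_{nn}$ as $\|e_n^*(H-s)^{-m}\|^{-1} = \tau_{(z-s)^m}(H)^m$, bound this by approximate functional calculus via inequality \eqref{eq:boundontau}, and then pull out a single index by a geometric-mean argument. However, the step you flag as "where the real work lies" --- connecting $|(H_\ell)_{n,n-1}|$ to the $R$ entries --- is where your proposal goes wrong, and in a way the paper's argument avoids entirely.

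The paper's key observation is simply that
$(H_{\ell+1})_{n,n-1} = (R_\ell Q_\ell)_{n,n-1} = (R_\ell)_{n,n}(Q_\ell)_{n,n-1}$,
because $R_\ell$ is upper triangular so only the $(n,n)$ entry of row $n$ survives, and since $Q_\ell$ is unitary $|(Q_\ell)_{n,n-1}|\le 1$, giving
$|(H_{\ell+1})_{n,n-1}| \le |(R_\ell)_{n,n}|$ \emph{with the index shifted by one}. Combined with $\min_{0\le\ell\le m-1}|(R_\ell)_{nn}| \le \big(\prod_{\ell=0}^{m-1}|(R_\ell)_{nn}|\big)^{1/m}$, the range $\ell\in\{1,\dots,m\}$ for $H_\ell$ matches $\ell\in\{0,\dots,m-1\}$ for $R_\ell$, and the lemma follows immediately. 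No Givens bookkeeping is needed.

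Your proposed replacements for this step both fail. The same-index inequality $|(H_\ell)_{n,n-1}|\le(R_\ell)_{nn}$ is false: tracking a single Givens step gives $(H_\ell-s)_{n,n-1}=(Q_\ell R_\ell)_{n,n-1}=(Q_\ell)_{n,n-1}(R_\ell)_{n-1,n-1}$, so the natural bound involves $(R_\ell)_{n-1,n-1}$, not $(R_\ell)_{nn}$. The "bulge-chasing invariant" $|(H_{\ell+1})_{n,n-1}(R_\ell)_{nn}| = |(R_{\ell+1})_{nn}(H_\ell)_{n,n-1}|$ also does not hold: the actual recurrence one derives is $|(H_{\ell+1})_{n,n-1}| = |(R_\ell)_{nn}|\cdot|(H_\ell)_{n,n-1}|/|(R_\ell)_{n-1,n-1}|$, which is not equivalent and does not make $|(H_\ell)_{n,n-1}|/(R_\ell)_{nn}$ constant in $\ell$. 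Even if the ratio were constant, your indices would still not line up with the product $\prod_{\ell=0}^{m-1}(R_\ell)_{nn}$. In short: the ingredients you assembled are the right ones, but the precise inequality and its index offset are missing, and the substitutes you propose are incorrect.
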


\begin{proof}
Because by definition: $R_\ell$ is upper triangular, all the entries of $Q_\ell$ are bounded by 1, and  $H_{\ell+1} = R_\ell Q_\ell+s$,   we know that 
    \begin{equation}
        \label{eq:upperboundonentry}
        |(H_{\ell+1})_{n, n-1}| \le |(R_{\ell})_{n,n}|.
    \end{equation}
    On the other hand 
    \begin{align}
    |(R_{0})_{n, n} \cdots (R_{m-1})_{n, n}|^{\frac{1}{m}}& = \|e_n^* (H-s)^{-m}\|^{-\frac{1}{m}} \nonumber  && \text{\cite[Lemma 3.3]{banks2022II}} \\ 
        & \le \frac{\kappa_V(H)^{\frac{1}{m}}}{\E\left[|Z_H-s|^{-2m}\right]^{\frac{1}{2m}}} \nonumber && \text{Lemma \ref{lem:spectral-measure-apx}} \\ \label{eq:boundonprodofRs}
        &\le \frac{\kappa_V(H)^{\frac{1}{m}}\ds{s}{H}}{\P\Big[|Z_H-s| =   \ds{s}{H}\Big]^{\frac{1}{2m}}}. 
    \end{align}
    So, combining  (\ref{eq:upperboundonentry}) and (\ref{eq:boundonprodofRs}) we get that  (\ref{eq:exactdecoupling}) holds for some $1\leq \ell\leq m$.   
\end{proof}

Using the forward error guarantees for $\iqr$ given in Lemma \ref{lem:multiiqrstability} we can easily get a finite arithmetic version of the above result.

\begin{lemma}[Decoupling in Finite Arithmetic]
\label{lem:decoupling}
    Let $H\in \bC^{n\times n}$ be a Hessenberg matrix and $s\in D(0, C \|H\|)$.    For every $\ell$ define $\ax{H_\ell} = \iqr(H,(z-s)^\ell)$ . Then, for each $m \ge 1$, if 
    \begin{equation}
    \label{assump:decoupling}
        \mach_{}  \leq 
       \min_{\ell\in [m]} \mach_{\iqr}\big(n,\ell,  \|H\|, \kappa_V(H), \dist(s,\Spec{H})\big)
    \end{equation}
    there is some $\ell \in [m]$ for which 
    \begin{align*}
      |(\ax{H_{\ell}})_{n, n-1}| \le   \frac{\kappa_V(H)^{\frac{1}{m}}\ds{s}{H}}{\P\Big[|Z_H-s| =   \ds{s}{H}\Big]^{\frac{1}{2m}}}  +  32\kappa_V(H) \|H\|\left(\frac{(2 + 2C)\|H\|}{\dist(s,\Spec{H})}\right)^\ell n^{1/2}\muqr(n)\mach.
    \end{align*}
    
\end{lemma}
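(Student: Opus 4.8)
The plan is to combine the exact-arithmetic decoupling bound from Lemma~\ref{lem:exactdecoupling} with the forward-error guarantee for $\iqr$ from Lemma~\ref{lem:multiiqrstability}, and then use the triangle inequality to pass from the exact iterates to the computed ones. Concretely, let $H_0 = H$ and $H_{\ell+1} = R_\ell Q_\ell + s$ for $[Q_\ell, R_\ell] = \qr(H_\ell - s)$ be the exact shifted-QR sequence, so that $H_\ell = Q^\ast H Q$ where $[Q,R] = \qr((H-s)^\ell)$. By Lemma~\ref{lem:exactdecoupling}, for the given $m \ge 1$ there is some $\ell \in [m]$ with
\begin{equation*}
    |(H_\ell)_{n,n-1}| \le \frac{\kappa_V(H)^{1/m}\ds{s}{H}}{\P\big[|Z_H - s| = \ds{s}{H}\big]^{1/2m}}.
\end{equation*}
Fix this $\ell$. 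The goal is to show $|(\ax{H_\ell})_{n,n-1}|$ is bounded by this quantity plus the claimed roundoff term.

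Next I would verify that Lemma~\ref{lem:multiiqrstability} applies with $p(z) = (z-s)^\ell$ and $\mathcal{S} = \{s,\dots,s\} \subset D(0,C\|H\|)$. The hypothesis (\ref{assump:decoupling}) requires $\mach \le \mach_{\iqr}(n,\ell,\|H\|,\kappa_V(H),\dist(s,\Spec H))$ for \emph{every} $\ell \in [m]$, hence in particular for our chosen $\ell$, so the hypothesis of Lemma~\ref{lem:multiiqrstability} is met. (Here $\dist(\mathcal{S},\Spec H) = \dist(s,\Spec H) = \ds{s}{H}$ since $\mathcal{S}$ is the singleton $\{s\}$ repeated.) Applying Lemma~\ref{lem:multiiqrstability} gives
\begin{equation*}
    \big\|\ax{H_\ell} - H_\ell\big\|_F \le 32\kappa_V(H)\|H\|\left(\frac{(2+2C)\|H\|}{\dist(s,\Spec H)}\right)^\ell n^{1/2}\muqr(n)\mach,
\end{equation*}
where $H_\ell = \next{H}$ in the notation of that lemma.

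Finally I would use that a single matrix entry is controlled by the Frobenius norm, $|(\ax{H_\ell})_{n,n-1} - (H_\ell)_{n,n-1}| \le \|\ax{H_\ell} - H_\ell\|_F$, and conclude by the triangle inequality:
\begin{equation*}
    |(\ax{H_\ell})_{n,n-1}| \le |(H_\ell)_{n,n-1}| + \|\ax{H_\ell} - H_\ell\|_F,
\end{equation*}
which is exactly the advertised bound once the two displayed estimates above are substituted. This is essentially a routine gluing argument; the only minor point requiring care is making sure the index $\ell$ produced by Lemma~\ref{lem:exactdecoupling} is the same one for which we invoke the finite-arithmetic bound, and that the precision hypothesis (\ref{assump:decoupling}), being a minimum over all $\ell \in [m]$, is strong enough to cover whichever $\ell$ the exact-arithmetic lemma hands us. There is no real obstacle here beyond bookkeeping: the heavy lifting was already done in Lemmas~\ref{lem:exactdecoupling} and~\ref{lem:multiiqrstability}.
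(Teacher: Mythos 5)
Your proof is correct and follows exactly the paper's argument: apply Lemma~\ref{lem:exactdecoupling} to fix an $\ell \in [m]$ satisfying the exact-arithmetic bound, apply Lemma~\ref{lem:multiiqrstability} to that same $\ell$ (noting the hypothesis~(\ref{assump:decoupling}) is a minimum over all $\ell \in [m]$ and hence covers it), control the single entry by the Frobenius norm, and conclude by the triangle inequality. No substantive differences from the paper's proof.
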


\begin{proof}
Let $H_0, \dots, H_m$ be as in the statement of Lemma \ref{lem:exactdecoupling}, and let $\ell\in [m]$ be such that (\ref{eq:exactdecoupling}) holds. Now,  (\ref{assump:decoupling}) ensures that we can apply Lemma \ref{lem:multiiqrstability} for the  $\ell$ we have specified, yielding
    $$\left|(H_\ell)_{n, n-1}-  (\ax{H_\ell})_{n, n-1} \right| \leq \left\|H_\ell - \ax{H_\ell} \right\|_F \leq 32\kappa_V(H) \|H\|\left(\frac{(2 + 2C)\|H\|}{\dist(s,\Spec{H})}\right)^\ell n^{1/2}\muqr(n)\mach.$$
    Combining this with (\ref{eq:exactdecoupling}) the advertised bound follows. 
\end{proof}

\subsection{Analysis of $\decouple$}

In view of the above results we define the subroutine $\decouple$ as follows. 
\bigskip

\begin{boxedminipage}{\textwidth}
$$\decouple$$
    \textbf{Input:} Hessenberg $\hess\in \bC^{n\times n}$, $\ax{\lambda}\in\bC$, and decoupling parameter $\wacc>0$     \\
    \textbf{Output:} $\next{H}\in \bC^{n\times n}$ Hessenberg matrix  \\
    \textbf{Requires:} $0< \ds{\ax{\lambda}}{H} \leq \omega/2$  \\
    \textbf{Ensures:} $|\next{H}_{n, n-1}| \leq \wacc$ and there exists a unitary $Q$ with $\|\hat{H}- Q^* HQ\|\leq 3.5 m \|H\|\muqr(n) \mach$, for $m$ defined as in the statement of Proposition \ref{prop:decouple}
\begin{enumerate}
    \item $\hat{H}\gets H$
    \item \label{line:decwhileloop} \textbf{While} $|\next{H}_{n, n-1}| > \omega$
    \begin{enumerate}[label= (\roman*)]
        \item $\next{\hess} \gets \iqr(\next{\hess}, z-\ax{\lambda})$
\end{enumerate} 
    \item Output $\hat{H}$
\end{enumerate}
\end{boxedminipage}

\begin{proposition}[Guarantees for $\decouple$]
\label{prop:decouple}
Assume that the requirements of $\decouple$ are satisfied,  that $H$ is diagonalizable, and that $d:=\ds{\ax{\lambda}}{H}$ and $p:=\P\big[|Z_H-\ax{\lambda}| =   d\big]$ are positive.  If 
\begin{align}
\label{assum:decouple}    \mach & \leq \mach_{\decouple}\big(n, \|H\|, \kappa_V(H), p, d\big) 
    \\ & := \frac{\mach_{\iqr}(n,m,\|H\|,\kappa_V(H),d)\omega}{16\cdot 5^m\cdot   n^{1/2} \|H\|},    \nonumber
\end{align}
for $m= \left\lceil \frac{\log(\kappa_V(H)^2/p)}{2 \log(3\omega/4d)} \right\rceil$, then $\decouple$ satisfies its guarantees and halts after at most $m$ calls to $\iqr$. Hence, it runs in at most
\begin{align*}
T_{\decouple}(n, \kappa_V(H), p, d)   := m T_{\iqr}(n, m)  = O\left( \log(\kappa_V(H)/p)^2 n^2 \right)
\end{align*}
arithmetic operations. 
\end{proposition}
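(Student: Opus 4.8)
The plan is to derive this by stitching together three facts already in hand: the exact decoupling estimate (Lemma~\ref{lem:exactdecoupling}), its finite‑arithmetic perturbation (Lemma~\ref{lem:decoupling}), and the backward‑stability guarantee for composed implicit QR (Lemma~\ref{lem:iqr-multi-backward-guarantees}). The parameters $m$ and $\mach_{\decouple}$ in the statement are engineered precisely so that the \emph{exact} part of the subdiagonal bound lands below $\tfrac34\omega$ and the \emph{roundoff} part below $\tfrac14\omega$, and the first step of the proof is to make this split explicit.

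First I would dispose of a normalization. Since $|H_{n,n-1}|\le\|H\|$, if $\omega\ge\|H\|$ the guard $|\next H_{n,n-1}|>\omega$ fails immediately, $\decouple$ returns $\hat H=H$, and the guarantees hold with $Q=I$; so I may assume $\omega<\|H\|$. Then the requirement $\ds{\ax{\lambda}}{H}\le\omega/2$ gives $|\ax{\lambda}|\le\|H\|+\omega/2<\tfrac32\|H\|$, i.e.\ $\ax{\lambda}\in D(0,C\|H\|)$ with $C=\tfrac32$, so that $2+2C=5$ (the base appearing in $\mach_{\decouple}$); and $d:=\ds{\ax{\lambda}}{H}<\|H\|/2$, so $d/\|H\|<1$. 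From the latter, $\mach_{\iqr}(n,\ell,\|H\|,\kappa_V(H),d)=(8\kappa_V(H)\muqr(n))^{-1}(d/\|H\|)^{\ell}$ is decreasing in $\ell$, hence $\min_{\ell\in[m]}\mach_{\iqr}(n,\ell,\dots)=\mach_{\iqr}(n,m,\dots)$, and since $\mach_{\decouple}$ is exactly this quantity times $\omega/(16\cdot5^mn^{1/2}\|H\|)<1$, the precision hypothesis \eqref{assump:decoupling} of Lemma~\ref{lem:decoupling} follows from \eqref{assum:decouple}.

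Next I would unwind the loop: after $k$ passes the running matrix equals $\iqr(H,(z-\ax{\lambda})^k)=\ax{H_k}$ in the notation of Lemma~\ref{lem:decoupling}, so that lemma (with $C=\tfrac32$) produces an index $\ell^\ast\in[m]$ with
$$|(\ax{H_{\ell^\ast}})_{n,n-1}|\le\Big(\tfrac{\kappa_V(H)^2}{p}\Big)^{1/(2m)}d+32\kappa_V(H)\|H\|\Big(\tfrac{5\|H\|}{d}\Big)^{\ell^\ast}n^{1/2}\muqr(n)\mach.$$
Because $d\le\omega/2$ forces $3\omega/(4d)\ge3/2>1$, the value $m=\lceil\log(\kappa_V(H)^2/p)/(2\log(3\omega/(4d)))\rceil$ is well defined and satisfies $(\kappa_V(H)^2/p)^{1/(2m)}\le3\omega/(4d)$, so the first term is at most $\tfrac34\omega$. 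For the second term I would use $\ell^\ast\le m$ and $5\|H\|/d>1$ to dominate it by $32\kappa_V(H)\|H\|(5\|H\|/d)^mn^{1/2}\muqr(n)\mach$, then plug in $\mach\le\mach_{\decouple}=\mach_{\iqr}(n,m,\dots)\,\omega/(16\cdot5^mn^{1/2}\|H\|)$ together with the closed form of $\mach_{\iqr}$ to see it is at most $\tfrac14\omega$. Hence $|(\ax{H_{\ell^\ast}})_{n,n-1}|\le\omega$, so the while loop halts after at most $\ell^\ast\le m$ passes, and the output $\hat H$ has $|\hat H_{n,n-1}|\le\omega$ by the loop guard.

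For the backward‑error clause, letting $k\le m$ be the number of passes actually performed, $\hat H=\iqr(H,(z-\ax{\lambda})^k)$ with all $k$ shifts equal to $\ax{\lambda}\in D(0,\tfrac32\|H\|)$; since $\muqr(n)\mach\le\muqr(n)\mach_{\decouple}$ is well below $1/4$, Lemma~\ref{lem:iqr-multi-backward-guarantees} supplies a unitary $Q$ with $\|\hat H-Q^\ast HQ\|\le1.4\,k(1+\tfrac32)\|H\|\muqr(n)\mach=3.5\,k\,\|H\|\muqr(n)\mach\le3.5\,m\,\|H\|\muqr(n)\mach$, as required. The running time is then immediate: at most $m$ degree‑$1$ calls to $\iqr$, for a total of at most $m\,T_{\iqr}(n,m)=O(\log(\kappa_V(H)/p)^2n^2)$ operations, using $m=O(\log(\kappa_V(H)^2/p))=O(\log(\kappa_V(H)/p))$. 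I expect the only mildly delicate part to be the constant accounting in the previous paragraph --- checking that the prescribed $m$ and $\mach_{\decouple}$ exactly realize the $\tfrac34\omega+\tfrac14\omega$ split --- together with the small normalization that legitimizes $C=\tfrac32$; I do not anticipate any substantive obstacle, since everything reduces to the cited lemmas.
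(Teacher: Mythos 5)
Your proposal is correct and follows essentially the same route as the paper's own proof: dispose of the trivial case $\omega\ge\|H\|$, deduce $\ax{\lambda}\in D(0,\tfrac32\|H\|)$ and $d\le\|H\|/2$ so that $C=\tfrac32$ (hence $2+2C=5$) and $\mach_{\iqr}$ is decreasing in $\ell$, invoke Lemma~\ref{lem:decoupling} to split the subdiagonal bound into an exact term controlled by the choice of $m$ and a roundoff term controlled by $\mach_{\decouple}$, and finish with Lemma~\ref{lem:iqr-multi-backward-guarantees} for the backward-error and running-time claims. The $\tfrac34\omega+\tfrac14\omega$ accounting matches the paper's exactly, and the small additional checks you include (well-definedness of $m$ since $3\omega/(4d)\ge3/2$, and $\muqr(n)\mach\le1/4$) are harmless elaborations of steps the paper leaves implicit.
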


\begin{proof}
First, if $\wcc\geq \|H\|$ the while loop in line \ref{line:decwhileloop} terminates immediately and $\decouple$ satisfies its guarantees after one arithmetic operation. Hence, we can assume $\wcc\leq \|H\|$, which combined with the assumption $d \leq \wcc/2$ gives $d\leq \|H\|/2$ and $\ax{\lambda} \in D(0, 1.5 \|H\|)$ . 

Now, for every $\ell$ define  $\ax{H_\ell} := \iqr\big(H, (z-\ax{\lambda})^\ell\big)$,  and note that (\ref{assum:decouple}) implies that 
$$\mach_{}  \leq \mach_{\iqr}(n, m, \|H\|, \kappa_V(H), d)=  \min_{\ell\in [m]} \mach_{\iqr}\big(n,\ell,  \|H\|, \kappa_V(H),  d\big),$$
where the last equality follows from $d\leq \|H\|/2$. Therefore, we can apply Lemma \ref{lem:decoupling} to get that there is some $\ell\in [m]$ for which
\begin{align*}
      |(\ax{H_{\ell}})_{n, n-1}| & \le  \left(\frac{\kappa_V(H)^2}{p}\right)^{\frac{1}{2m}}d  +  32\kappa_V(H) \|H\|\left(\frac{5\|H\|}{d}\right)^\ell n^{1/2}\muqr(n)\mach.
    \end{align*}
Now, by our choice of $m$ we have that 
$$\left(\frac{\kappa_V(H)^2}{p}\right)^{\frac{1}{2m}}d \leq \frac{3\omega}{4},$$
and by (\ref{assum:decouple}), because $\ell\leq m$ and $\wcc\leq \|H\|$, we have that 
$$ 32\kappa_V(H) \|H\|\left(\frac{5\|H\|}{d}\right)^\ell n^{1/2}\muqr(n)\mach\leq \frac{\wcc}{4}.$$
Combining the above inequalities we get that $|(\ax{H_\ell})_{n, n-1}| \leq \omega$ as we wanted to show. To prove the remaining claim use again that $\ax{\lambda}\in D(0, C \|H\|)$  for $C=1.5$, and apply Lemma \ref{lem:iqr-multi-backward-guarantees} to get that there is a unitary $Q$ for which
$$\|\ax{H_\ell}- Q^* H Q\|\leq 1.4 \ell (1 + C)\|H\|\muqr(n)\mach \leq 3.5 m \|H\|\muqr(n)\mach,$$
as we wanted to show. 
\end{proof}

\section{Randomized Hessenberg Form}
\label{sec:rhess}

Some of the most common and well understood subroutines in numerical linear algebra are those used to put an arbitrary matrix $M\in \bC^{n\times n}$ into a Hessenberg form $H$ (e.g. see \cite{demmel1997applied, higham2002accuracy}). The only reason why we have decided to include this section in the present paper, is that we were not able to find in the literature a  rigorous result about the effect of randomizing the Hessenberg form $H$ that could allow us to conclude an explicit probabilistic lower bound on $\min_{\lambda\in \Spec(H)} \P[Z_H=\lambda]$. Here, in our analysis we assume access to a deterministic algorithm that uses Householder reflectors to obtain the Hessenberg form (see Definition \ref{def:buh} below for details), and to a random unit vector generator satisfying the assumptions from Definition \ref{def:usampler} above.

\subsection{Householder Reflectors}

Computing Householder reflectors is essential to many numerical linear algebra algorithms and a thorough analysis of the numerical errors involved can be found in \cite[Section 19.3]{higham2002accuracy}. In short, Householder reflectors are matrices $P\in \bC^{n\times n}$ of the form $P=I-\beta v v^*$ with $v\in \bC^{n} \setminus \{0\}$ and $\beta:= \frac{2}{v^* v}. $\footnote{It is easy to see that $P$ is a reflection over the hyperplane $\{v\}^\perp$.} In practice, given  $v$, instead of computing $P$ it is more convenient to simply store $v$, which for any vector $x$ allows to compute $P x$ by just computing $x-\beta (v^* x) v$ and this takes 
$$T_{\mathsf{hous}}(n) = O(n)$$
 arithmetic operations.

With this in mind, given $x, v \in \bC^{n}$ we will use $\hous{v, x}$ to denote the finite arithmetic computation of $Px$ following the procedure outlined above. Similarly, given $A\in \bC^{n\times n}$ we will use $\hous{v, A}$ to denote the finite arithmetic computation of $PA$, where the $i$-th column of $PA$ is computed as $\hous{v, A^{(i)}}$ where $A^{(i)}$ denotes the $i$-th column of $A$.  

In \cite[Lemma 19.2]{higham2002accuracy} it was shown that there exists a small universal constant $\chh$ for which, provided that $\chh n\mach < 1/2$, one has 
\begin{equation}
\label{eq:errorhouseholder}
\hous{v, x} = (P+E)x  \quad \text{for} \quad \|E\|_F \leq 2\chh n \mach, 
\end{equation}
for any $x\in \bC^{n}$. This will be used later in the analysis of $\rhform$. 

\subsection{Hessenberg Form}

The standard way in which a matrix $M\in \bC^{n\times n}$ is put into Hessenberg form using Householder reflectors is by using a \emph{left-to-right} approach, where one generates a sequence of Householder reflectors $P_1, \dots, P_{n-2}$, that ensure that $H:= P_{n-2} \cdots P_1 M P_1 \cdots P_{n-2}$ is Hessenberg, and where each $P_i$ is used to set to zero the entries in \emph{column} $i$ of the working matrix that are \emph{below} the subdiagonal. 

However, since we will be interested in randomizing the relative position of $e_n$ with respect to the eigenbasis of $H$, it will be  convenient to instead use a \emph{bottom-up} approach, and choose each $P_i$ to set to zero the entries in \emph{row} $i$ that are to the \emph{left} of the corresponding subdiagonal. In this way, when acting on the left of the matrix, the $P_i$ leave the $n$-th row of the working matrix invariant and, in particular, we will have  $e_n^* P_i = e_n^*$. Since the left-to-right and bottom-up approaches are essentially equivalent, the results from \cite[Theorem 2]{coise1996backward} and \cite[Section 4.4.6]{demmel1997applied} apply in both situations, and in particular imply the existence of an efficient and backward stable algorithm in the following sense. 

\begin{definition}[Bottom-up Hessenberg Form Algorithm]
\label{def:buh}
A $\ch$-stable bottom-up Hessenberg form algorithm $\hform$, is an algorithm that takes as input a matrix $M \in \bC^{n\times n}$ and outputs a Hessenberg matrix $\hess\in \bC^{n\times n}$ satisfying that there exists a unitary $Q$ with
$$\| H - Q^*M Q\|\leq   \ch \|M\| n^{5/2} \mach  $$
and such that $Q e_n = e_n$.  We say that $\hform$ is efficient if it runs in at most
$$T_{\hform}(n) := \frac{10}{3} n^3 + O(n^2)$$
arithmetic operations. 
\end{definition}

\subsection{Analysis of $\rhform$}

As mentioned above, the only source of randomness for $\hform$ is a random vector uniformly sampled from the complex unit sphere. Our main technical tool for the analysis will be the following standard anti-concentration result, whose proof we defer to the appendix.  

\begin{lemma}[Anti-Concentration for Random Vectors]
\label{lem:anticoncentration}
Let $u\sim \unif(\bS^{n-1}_\bC)$ and $v\in \bC$ with $\|v\|=1$. Then for all $t\in [0, 1]$
$$\P\left[ |u^* v| \leq \frac{t}{\sqrt{n-1}} \right]  \leq  t^2.$$
\end{lemma}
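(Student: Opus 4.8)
The plan is to reduce the statement to a one-dimensional computation using the rotational symmetry of the uniform measure, identify the resulting law explicitly, and close with Bernoulli's inequality.

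First I would invoke unitary invariance of $\unif(\bS^{n-1}_\bC)$: since any unit vector can be carried to $e_1$ by a unitary $U$, and $Uu$ is again uniform on $\bS^{n-1}_\bC$, the random variable $|u^*v|$ has the same law as $|u_1|$, the modulus of the first coordinate of $u\sim\unif(\bS^{n-1}_\bC)$. (The case $n=1$ is vacuous, so assume $n\ge 2$.) It thus suffices to bound $\P[|u_1|\le t/\sqrt{n-1}]$. Next I would realize $u$ as a normalized complex Gaussian vector: if $g=(g_1,\dots,g_n)$ has i.i.d.\ standard complex Gaussian entries, then $g/\|g\|$ is distributed as $u$, so $|u_1|^2$ has the law of $|g_1|^2/(|g_1|^2+\cdots+|g_n|^2)$. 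Because the $|g_i|^2$ are i.i.d.\ exponential random variables, this ratio is $\Beta(1,n-1)$-distributed, with density $(n-1)(1-x)^{n-2}$ on $[0,1]$; integrating the density gives $\P[|u_1|^2\le s]=1-(1-s)^{n-1}$ for $s\in[0,1]$.

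Finally, applying this with $s=t^2/(n-1)\le 1$ (valid since $t\le 1\le n-1$) and using Bernoulli's inequality $(1-s)^{n-1}\ge 1-(n-1)s$, I would conclude
$$\P\!\left[\,|u^*v|\le \frac{t}{\sqrt{n-1}}\,\right]=\P\!\left[\,|u_1|^2\le \frac{t^2}{n-1}\,\right]=1-\left(1-\frac{t^2}{n-1}\right)^{n-1}\le t^2,$$
which is the claimed bound. The only step with any content is the identification of the ratio of squared Gaussian moduli with the $\Beta(1,n-1)$ law — equivalently, computing the surface-area density of a spherical cap, or carrying out a direct coarea/Archimedes-type argument for the pushforward of uniform measure on $S^{2n-1}\subset\bR^{2n}$ under $(x_1,x_2)\mapsto x_1^2+x_2^2$; once that density is in hand the rest is a one-line estimate, so I expect that density computation to be the main (and only mild) obstacle.
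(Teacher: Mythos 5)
Your proposal is correct and follows essentially the same route as the paper: unitary invariance to reduce to the first coordinate, realization of the uniform vector as a normalized Gaussian so that $|u_1|^2\sim\Beta(1,n-1)$, integration of the density to get $1-(1-s)^{n-1}$, and Bernoulli's inequality to close. The paper phrases the intermediate step via real $\chi^2(2)$ and $\chi^2(2n-2)$ variables where you use complex Gaussians and exponentials, but these are the same computation.
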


We can now define the algorithm and proof its guarantees. 
\bigskip

\begin{boxedminipage}{\textwidth}
$$\rhform$$
    \textbf{Input:}  $M\in \bC^{n\times n}$    \\
    \textbf{Output:}  $H \in \bC^{n\times n}$ \\
    \textbf{Requires:} $\Lambda_\epsilon (M)$ is $\zeta$-shattered \\
    \textbf{Ensures:} $\hess$ is Hessenberg, $\|\hess - Q^* M Q\|\leq  \crh \|M\|  n^{5/2}\mach $  for some unitary $Q$, $\Lambda_{\epsilon'}(\hess)$ is \\ $\zeta$-shattered for $\epsilon' = \epsilon -\crh \|M\|  n^{5/2}\mach  $. Moreover, for any $t$, with probability at least $1-nt^2$ it holds that    $\P[Z_{\hess} = \lambda] \geq \left(\frac{\epsilon' t}{n^{3/2} \shat} \right)^2$  for all $\lambda \in \Spec{H}$
\begin{enumerate}
    \item $u \sim \unif(\bS^{n-1}_{\bC})$
    \item \label{line:firstconjugatedmatrix} $H \gets \hous{u-e_n, M}$ 
    \item \label{line:secconjugatedmatrix} $H \gets \hous{u-e_n, H^*}^*$
    \item $H \gets \hform\big(H\big)$
\end{enumerate} 
\end{boxedminipage}
\bigskip

\begin{proposition}[Guarantees for randomized Hessenberg form] 
\label{prop:rhformguarantees}
Assume that 
\begin{equation}
\label{assum:RHess}
\mach \leq \mach_{\rhform}(n) := \frac{1}{20 \chh n^{3/2}}.
\end{equation}
Then, $\rhform$ satisfies its guarantees for $\crh = 3(\ch+\chh)$ and can be instantiated using at most 
$$T_{\rhform}(n) :=  T_{\hform}(n)+ 2nT_{\mathsf{hous}}(n)+\cu n = O(n^3). $$
arithmetic operations. 
\end{proposition}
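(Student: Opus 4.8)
The plan is to analyze the algorithm line by line, tracking the unitary conjugations and the backward error they induce, and then to apply the anti-concentration lemma to the final coordinate of the random unit vector in the eigenbasis. First I would observe that lines \ref{line:firstconjugatedmatrix}--\ref{line:secconjugatedmatrix} compute, up to roundoff, $P M P$ where $P = I - \beta vv^*$ is the Householder reflector with $v = u - e_n$, which is exactly the reflector satisfying $Pe_n = u$ (equivalently $P u = e_n$). In exact arithmetic this conjugation is unitary and hence, by Lemma \ref{lem:decrementeps}(i), leaves the pseudospectrum unchanged; moreover it moves $e_n$ to $u$, so that the relative position of $e_n$ with respect to the eigenbasis of $PMP$ is the same as the relative position of $u$ with respect to the eigenbasis of $M$. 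The final call to $\hform$ applies a further unitary $Q_0$ with $Q_0 e_n = e_n$, so composing everything, $H$ is unitarily equivalent (in exact arithmetic) to $M$ via a unitary $\widetilde Q$ with $\widetilde Q e_n = u$, and $Z_H$ has exactly the distribution of ``$Z$'' computed with the deterministic vector $u$ in place of $e_n$.

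Next I would handle the finite-arithmetic error. Using \eqref{eq:errorhouseholder} for each of the $2n$ Householder applications in lines \ref{line:firstconjugatedmatrix}--\ref{line:secconjugatedmatrix} and the $\ch$-stability of $\hform$ from Definition \ref{def:buh}, and the assumption \eqref{assum:RHess} that $\mach \le 1/(20\chh n^{3/2})$ to keep all the first-order error expansions valid (in particular $\chh n \mach < 1/2$ and the accumulated reflector errors stay small), a routine accumulation gives $\|H - Q^* M Q\| \le \crh \|M\| n^{5/2}\mach$ for a unitary $Q$ (still with $Qe_n = e_n$ after folding the reflector into $Q$ appropriately), with $\crh = 3(\ch + \chh)$; the factor $3$ and the $n^{5/2}$ absorb the $\ch n^{5/2}$ from $\hform$, the $2n \cdot 2\chh n$ from the $2n$ reflector applications (each on an $n$-column matrix), and cross terms. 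Then Lemma \ref{lem:decrementeps}(ii) immediately gives $\Lambda_{\epsilon'}(H) \subset \Lambda_{\epsilon}(M)$ for $\epsilon' = \epsilon - \crh\|M\|n^{5/2}\mach$, so $\Lambda_{\epsilon'}(H)$ is $\shat$-shattered (same disks).

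For the probabilistic eigenvalue-mass bound, recall $\P[Z_H = \lambda_i] = |e_n^* V e_i|^2 / \|e_n^* V\|^2$ where $V$ realizes $\|V\|=\|V^{-1}\|=\sqrt{\kappa_V(H)}$. Because $\|e_n^* V\| \le \|V\| = \sqrt{\kappa_V(H)}$, it suffices to lower bound $|e_n^* V e_i|$. Writing $V = \widetilde Q^{-1}$-conjugated eigenbasis of $M$ (to first order), $e_n^* V$ is, up to the negligible backward-error perturbation and up to the fixed invertible change of basis, governed by $u^* (\text{unit eigenvector of } M)$; applying Lemma \ref{lem:anticoncentration} with the chosen $t$ and a union bound over the $n$ eigenvalues gives $|u^* v_i| \ge t/\sqrt{n-1} \ge t/\sqrt n$ for all $i$ with probability $\ge 1 - nt^2$. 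Combining with $\|V^{-1}\| = \sqrt{\kappa_V(H)}$ (which controls how much the change of basis can shrink the inner product) and the bound $\kappa_V(H) \le n\shat/\epsilon'$ from Lemma \ref{lem:kappavfromshattering} applied to $\Lambda_{\epsilon'}(H)$, one gets $|e_n^* V e_i|^2 / \|e_n^* V\|^2 \ge (\epsilon' t / (n^{3/2}\shat))^2$, as claimed. Finally the running time is the sum $T_{\hform}(n) + 2n\,T_{\mathsf{hous}}(n) + \cu n = O(n^3)$ directly from the stated costs. The main obstacle I expect is the bookkeeping in the previous step: making precise how $\P[Z_H=\lambda_i]$ — defined through the \emph{norm-balanced} $V$ for $H$ rather than any eigenbasis of $M$ — relates to $|u^* v_i|$ for eigenvectors of $M$, since $H$ differs from the exact conjugate of $M$ by $O(\|M\|n^{5/2}\mach)$ and the balanced $V$ need not be the literal pushforward of an eigenbasis of $M$; one must argue that the $\shat$-shattering (hence the bound on $\kappa_V$, hence well-conditioned eigenvectors and adequate spectral gap) makes all of these perturbations harmless and lets the clean inner-product estimate go through with only constant-factor loss, which is exactly what the generous constants in $\crh$ and the $t^2$ slack are there to accommodate.
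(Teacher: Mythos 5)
Your treatment of the roundoff error in lines \ref{line:firstconjugatedmatrix}--\ref{line:secconjugatedmatrix} and the subsequent application of Lemma \ref{lem:decrementeps} matches the paper's, and your identity $\P[Z_H=\lambda_i] \ge |u^* v_i|^2/\kappa_V(H)^2$ (after unwinding $|e_n^*Ve_i| = |u^*v_i|\,\|Ve_i\| \ge |u^*v_i|/\|V^{-1}\|$) is algebraically the same bound the paper uses. The genuine gap is in what $v_i$ is and how you get an exact, rather than approximate, identity relating $e_n^*V$ to $u^*$. You propose to take $v_i$ to be a unit eigenvector of $M$ and then argue that the $O(\|M\|n^{5/2}\mach)$ backward perturbation is ``harmless,'' explicitly flagging this as ``the main obstacle.'' But that route requires eigenvector perturbation theory for a nonnormal matrix, which is substantial and is never invoked in the paper. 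The key observation you are missing is that no perturbation argument is needed at all: take $E$ to be the backward error, so that $H = Q^\ast P (M+E) P Q$ holds \emph{exactly}, and let $V$ be the norm-balanced eigenvector matrix of $M+E$ (not of $M$). Then $Q^\ast P V$ is the norm-balanced eigenvector matrix of $H$, with $\|Q^\ast P V\| = \|V\| = \sqrt{\kappa_V(H)}$ because $P,Q$ are unitary, and the identity $e_n^\ast Q^\ast P V = u^\ast V$ (using $Q e_n = e_n$ and $P e_n = u$) is also exact. This turns the whole anti-concentration step into pure algebra in the $v_i := Ve_i/\|Ve_i\|$ of $M+E$, with Lemma \ref{lem:anticoncentration} applied directly to those, and the ``generous constants in $\crh$'' play no role there. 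Your roundoff accounting is sound; your anti-concentration step, as written, does not close, and the fix is to work with $M+E$ exactly rather than with $M$ plus a perturbation you never quantify.
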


\begin{proof}
The case $n=1$ is trivial so we assume $n\geq 2$.  Let $H$ be the output of $\rhform(M)$,  $A_1$ and $A_2$ be the matrices computed in lines \ref{line:firstconjugatedmatrix} and \ref{line:secconjugatedmatrix} of $\rhform$, $P=I-\beta vv^*$ for $v=u-e_n$ (and $\beta=\frac{2}{v^*v}$),   and define $E_1:= A_1-PM$ and $E_2 :=A_2-A_1P$.  From (\ref{eq:errorhouseholder}) it is easy to see that 
$$\|E_1\| \leq 2 \chh \|M\| n^{3/2} \mach \quad \text{and} \quad \|E_2\| \leq 2 \chh \|A_1\| n^{3/2} \mach.$$
Using the first inequality and (\ref{assum:RHess}) we get that $\|A_1\|\leq \|E_1\|+\|M\| \leq 1.1 \|M\|$. Then, combining this with the second inequality we get $\|E_2\| \leq 2.2 \chh \|M\| n^{3/2} \mach$. Hence 
\begin{equation}
\label{eq:rhess1}
\|A_2-PMP\| \leq \|A_2-A_1P\|+ \|A_1P-PMP\| = \|E_1\|+\|E_2\|\leq 4.2 \chh \|M\| n^{3/2} \mach. 
\end{equation}
Again because of (\ref{assum:RHess}) the above inequality implies that $\|A_2\| \leq 1.3 \|M\|$. So, by Definition \ref{def:buh} we get that $\|H-Q^* A_2 Q\|\leq 1.3 \ch \|M\| n^{5/2}\mach,$ for some unitary $Q$ satisfying $Qe_n = e_n$, which combined with (\ref{eq:rhess1}) yields 
$$\|H-Q^*PMPQ\| \leq (1.3\ch n^{5/2}+4.2 \chh n^{3/2})\|M\|\mach \leq \crh \|M\| n^{5/2}\mach,$$
proving the first claim. Now, because $\Lambda_\epsilon(M)$ is $\shat$-shattered, the above inequality and Lemma \ref{lem:decrementeps} imply that $\Lambda_{\epsilon'}(H)$ is $\shat$-shattered for $\epsilon'=\epsilon -\crh\|M\|n^{5/2} \mach$. 

It remains to prove the anti-concentration statement for $Z_H$. To do this let $E\in \bC^{n\times n}$ be such that $H = Q^*P(M+E)PQ$, and let $M+E = VDV^{-1}$ with $D=\mathrm{diag}(\lambda_1, \dots, \lambda_n)$ and $V$ chosen so that $\|V\|=\|V^{-1}\| = \sqrt{\kappa_V(M+E)}$. Now note that $Q^*PV$ is an eigenvector matrix for $H$, and  because $P$ and $Q$ are unitary $\|Q^*PV\| = \| V\| =\sqrt{\kappa_V(M+E)} = \sqrt{\kappa_V(H)}$. So
\begin{align*}
\P[Z_\hess  = \lambda_i] & = \frac{|e_n^* Q^* P V e_i|^2}{\|e_n^* Q^* P V\|^2} && \text{definition of } \P[Z_{\hess} = \lambda_i] 
\\ & = \frac{|e_n^*  P V e_i |^2}{\|e_n^* P V\|^2}     && e_n^* Q^* = e_n^*
\\ & = \frac{|u^* V e_i|^2 }{\|u^* V\|^2} &&  u  =  P e_n\text{ by definition of }P.
\end{align*}
To simplify notation define  $v_i:= \frac{V e_i}{\|Ve_i\|}$. We then have 
\begin{align*}
    \frac{|u^* V e_i|^2 }{\|u^* V\|^2} & =  \frac{|u^* v_i|^2 \|Ve_i\|^2 }{\|u^* V\|^2}
    \\ & \geq \frac{|u^* v_i|^2 }{\| V\|^2\|V^{-1}\|^2} && \|Ve_i\| \geq \frac{1}{\|V^{-1}\|} \text{ and } \|u^* V\| \leq \|V\|
    \\ &= \frac{|u^* v_i|^2 }{\kappa_V(H)^2} && \kappa_V(M+E) = \kappa_V(H)
    \\ & \geq \left(\frac{\epsilon'|u^*v_i|}{n\shat}\right)^2 && \Lambda_{\epsilon'}(H) \text{ is } \shat\text{-shattered  and  Lemma \ref{lem:kappavfromshattering}}. 
\end{align*}
Now, because $\|v_i\|=1$, we can apply Lemma \ref{lem:anticoncentration} to get that for any $t\geq 0$
$$\P\left[|u^* v_i| \geq \frac{t}{\sqrt{n-1}}\right] \geq 1- t^2.$$
Which, in conjunction with the above gives that 
$$\P[Z_H=\lambda_i] \geq \left(\frac{\epsilon' t}{n^{3/2} \shat} \right)^2, $$
with probability at least $1-t^2$. The advertised claim then follows from taking a union bound over all $v_i$. The claim about the running tie follows trivially.   
 
\end{proof}

\section{The Main Algorithm}
\label{sec:eig}

So far, all but one of the subroutines required to define $\eig$ have been discussed. The remaining subroutine that will be needed is the one used for deflation, denoted here by $\deflate(H, \omega)$, which on a Hessenberg input $H\in \bC^{n\times n}$ sets to zero any of the $n-1$ subdiagonals of $H$ that are less or equal (in absolute value) to $\omega$, and returns the diagonal blocks $H_1, H_2, \dots$ of the resulting matrix.  

We are now ready to define the main algorithm and prove its guarantees. Note that $n$ refers to the dimension of the original input matrix, which is used to set parameters throughout the recursive calls to $\findritz$.
\bigskip

\begin{boxedminipage}{\textwidth}
$$\findritz$$
    \textbf{Input:}  Complex matrix $M$,  accuracy $\bck$,  failure probability tolerance $\phi$   \\
    \textbf{Global Data:} Dimension $n$, norm estimate $\scale$,  pseudospectral parameter $\epsilon$, shattering parameter $\shat$\\ 
    \textbf{Output:} A multiset $\Lambda \subset \mathbb{C} $ \\
    \textbf{Ensures:} $\Lambda$ is the spectrum of a matrix $\ax{M}$ with $\|M- \ax{M}\| \leq \bck $. 
\begin{enumerate}
 \item \label{line:eigparameters}   $\absacc \gets \frac{\delta \scale}{2}, \,  \wcc\gets \frac{\epsilon \wedge \absacc}{3 n} $, \,  $\fward\gets \frac{\wcc}{20} , \, p\gets \frac{ \phi \epsilon^2}{2 n^{5} \shat^2} $, \,$\wfail\gets \frac{\phi}{2n}$,\, $\cor \gets \false$
    \item \label{line:eigrhess} $\hess \gets \rhform(M)$    
    \item \label{line:eigwhile} \textbf{While} $\cor = \false$ \\
    $[\ax{\lambda}, \cor] \gets \oneig(\hess, \fward,\wfail, p) $
    
    \item $H \gets \decouple(H, \ax{\lambda}, \omega)$
 
    \item $[M_1, M_2, \dots ] \gets \deflate(\hess, \wcc)$
 
    \item $\Lambda \gets   \bigsqcup_i \findritz\big(M_i, \delta  ,  \phi \big) $
\end{enumerate} 
\end{boxedminipage}
\bigskip

\begin{theorem}
\label{thm:mainquantitative}
Let $M$ be the input matrix and $\delta \in (0, 1)$. Let  $\beta, \absacc$ and  $p$ be as in line \ref{line:eigparameters} of $\eig$.   Assume that the global data satisfies $n=\dim(M)$, $\scale/2 \leq \|M\|\leq \scale$, and that $\Lambda_{2\epsilon}(M)$ is $\shat$-shattered. If
\begin{align}
\label{assum:eig}
\mach & \leq \mach_{\eig}(n, \scale, \epsilon, \shat, \delta, \phi) 
\\ & :=  \frac{\epsilon}{6\cdot 10^3 (\chh\vee \ch\vee \croot)\muqr(n) n \shat } \left( \frac{\tolr}{44\scale}\right)^{2m_1}
\nonumber
\end{align}
where 
\begin{align}
&m_1=\left \lceil 12\log(n\shat/\epsilon)+ 6\log(1/p)\right\rceil =O\big(\log(n\shat/\epsilon\phi)\big) \nonumber
\\ \label{eq:etaandm} \text{and} \quad  &\tolr= \left( \frac{\epsilon\wedge \absacc}{300n}\right) \left( \frac{\phi}{24 n\log(18\scale n/(\epsilon\wedge \absacc))} \right)^{1/2} = O\left( \frac{(\epsilon\wedge \absacc)\phi^{1/2} }{n \log(\scale n/(\epsilon\wedge \absacc))^{1/2}}\right) 
\end{align}
Then, with probability at least $1 - \phi$ $\eig$ satisfies its guarantees, and in this event runs in at most
\begin{align*}
T_{\eig}(n, \epsilon, \shat, \delta)  &=  (n - 1)(T_{\rhform}(n)+ T_{\oneig}(n, \scale, \epsilon, \shat, p, \fward)+ T_{\decouple}(n, \shat n/\epsilon, p , \beta) ) 
\\  &=  O\Big( n^4 + n^3 \log(\shat n/\epsilon\phi)\big( \log(\scale n/(\epsilon\wedge \absacc))+ \log(\shat n/\epsilon \phi) \big) 
\\  & \quad \quad   + \log  (\scale n/(\epsilon\wedge \absacc))\log(\shat n/\epsilon \phi) \log \log(\shat n/\epsilon \phi)\Big)    
\end{align*}
arithmetic operations. 
\end{theorem}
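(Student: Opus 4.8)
I would prove Theorem~\ref{thm:mainquantitative} by induction on the dimension $n'$ of the matrix handed to $\findritz$, carrying three invariants down the recursion tree. Fix a per-level loss budget $\rho := 4\scale\, n^{5/2}\big(\ch+\chh+1+\muqr(n)\big)\mach + \wcc$; by \eqref{assum:eig} the first term is exponentially small and $\wcc=\tfrac{\epsilon\wedge\absacc}{3n}$, so $n\rho<\epsilon$ and $(n-1)\rho\le\absacc$. Call an invocation $\findritz(M',\delta,\phi)$ at recursion depth $d$ \emph{admissible} if $\|M'\|\le\|M\|+\absacc$ and $\Lambda_{2\epsilon-d\rho}(M')$ is $\shat$-shattered. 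The root is admissible by hypothesis, and since the recursion never goes deeper than $n-1$ and $n\rho<\epsilon$, every admissible node has $2\epsilon-d\rho>\epsilon$, so $\Lambda_\epsilon(M')$ is $\shat$-shattered and $\kappa_V(M')\le n\shat/\epsilon$ by Lemma~\ref{lem:kappavfromshattering}. The inductive claim is: on a good event $\mathcal E$ (below), an admissible call returns $\Lambda=\Spec{\ax{M'}}$ for some $\ax{M'}$ with $\|M'-\ax{M'}\|\le(n'-1)\rho$, makes only admissible recursive calls (at depth $d+1$), and runs within the stated budget.

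\textbf{One level of the recursion.} For an admissible node I would first check that \eqref{assum:eig} implies the precision hypotheses of all three subroutines (with their preconditions met) and then chain their guarantees. $\rhform$ (Proposition~\ref{prop:rhformguarantees}) returns a Hessenberg $H$ with $\|H-Q_0^\ast M'Q_0\|$ tiny, $\|H\|\le 2\scale$, and $\Lambda_{\epsilon_1}(H)$ $\shat$-shattered with $\epsilon_1\ge\epsilon$; taking its free parameter as $t^2=\phi/(2n^2)$ makes its failure probability exactly $\wfail$ and, on success, gives $\P[Z_H=\lambda]\ge p$ for all $\lambda$ — this is the point of the constants in line~\ref{line:eigparameters}. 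Then $\oneig$ (Proposition~\ref{prop:findone}) applies: its norm, shattering and mass hypotheses hold, and the factor $(\tolr/44\scale)^{2m_1}$ in \eqref{assum:eig} is exactly what $\mach_{\oneig}=\mach_{\dispec}(n,m_1,10,2\scale,n\shat/\epsilon,\tolr)$ unwinds to via Lemmas~\ref{lem:guaranteetaum} and~\ref{lem:mthroot} once $\kappa_V(H)\le n\shat/\epsilon$ is used; so with probability $\ge 1-\wfail$ it outputs $\ax{\lambda}$ with $\tolr\le\ds{\ax{\lambda}}{H}\le\fward$. This lower bound $\ds{\ax{\lambda}}{H}\ge\tolr$ is what makes $\mach\le\mach_{\decouple}$, so $\decouple$ (Proposition~\ref{prop:decouple}) returns $\next{H}$ with $|\next{H}_{n,n-1}|\le\wcc$ after a further tiny unitary perturbation. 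Finally $\deflate(\next{H},\wcc)$ zeroes the subdiagonal entries of modulus $\le\wcc$ (at least the $(n,n-1)$ one), yielding $\bigoplus_{j\le k}M_j$ with $k\ge 2$, $\dim M_j\le n'-1$, and $\|\next{H}-\bigoplus_j M_j\|\le\wcc$ (the difference is a weighted shift on the first subdiagonal). Collecting all conjugations and perturbations, $M'$ is within some $\rho'\le\rho$ of a unitary conjugate of $\bigoplus_j M_j$; pushing the same chain through Lemma~\ref{lem:decrementeps} and using $\Lambda_\eta(\bigoplus_j M_j)=\bigcup_j\Lambda_\eta(M_j)$ shows each $\Lambda_{2\epsilon-(d+1)\rho}(M_j)$ is $\shat$-shattered and $\|M_j\|\le\|M\|+\absacc$, so every child is admissible at depth $d+1$. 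The inductive hypothesis then gives $\Spec{\ax{M_j}}$ with $\|M_j-\ax{M_j}\|\le(\dim M_j-1)\rho\le(n'-2)\rho$, and since the blocks are disjoint, $\Lambda=\bigsqcup_j\Spec{\ax{M_j}}$ is the spectrum of $\bigoplus_j\ax{M_j}$; conjugating back produces $\ax{M'}$ with $\|M'-\ax{M'}\|\le\rho'+(n'-2)\rho\le(n'-1)\rho$, closing the induction (the base case $n'=1$, returning the scalar entry, is trivial). At the root this gives $\|M-\ax{M}\|\le(n-1)\rho\le\absacc=\tfrac\delta2\scale\le\delta\|M\|$, the advertised backward error.

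\textbf{Probability and running time.} I would let $\mathcal E$ be the event that at every node the $\rhform$ mass guarantee and the first $\oneig$ output guarantee both hold; a single admissible node fails locally with probability $\le 2\wfail=\phi/n$. Writing $f(n')$ for the worst-case failure probability over admissible $n'\times n'$ inputs, $f(1)=0$ and $f(n')\le\phi/n+\max_{k\ge2,\,\sum_j n_j=n'}\sum_j f(n_j)$, so $f(n')\le(n'-1)\phi/n$ by induction (using only $k\ge2$) and $f(n)<\phi$; this sidesteps the fact that the tree shape is itself random. On $\mathcal E$ each $\findritz$ while-loop runs exactly once, and since the tree has $n$ leaves and only $\ge 2$-ary internal nodes it has $\le n-1$ internal nodes, each running one $\rhform$, $\oneig$, $\decouple$, $\deflate$ on a matrix of size $\le n$; multiplying $T_{\rhform}(n)=O(n^3)$, $T_{\oneig}$, $T_{\decouple}$, $T_{\deflate}=O(n^2)$ from Propositions~\ref{prop:rhformguarantees},~\ref{prop:findone},~\ref{prop:decouple} by $n-1$ and substituting $m_1=O(\log(n\shat/\epsilon\phi))$, $\log(1/p)=O(\log(n\shat/\epsilon\phi))$, $\log(\scale/\fward)=O(\log(\scale n/(\epsilon\wedge\absacc)))$ gives the stated count after routine simplification.

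\textbf{Expected main obstacle.} The content is bookkeeping rather than insight; I expect the delicate parts to be (i) checking that the single hypothesis \eqref{assum:eig} meets the precision demands of all three subroutines at every level — the binding constraint being $\mach\le\mach_{\decouple}$, which only becomes meaningful after $\oneig$ has certified $\ds{\ax{\lambda}}{H}\ge\tolr$ — and (ii) controlling the two-sided erosion of the shattering parameter so that $2\epsilon-d\rho$ stays above $\epsilon$ for every $d\le n-1$; this is exactly why the theorem hypothesizes that $\Lambda_{2\epsilon}(M)$ (rather than $\Lambda_\epsilon(M)$) is $\shat$-shattered and why $\wcc$ is pinned at $\tfrac{\epsilon\wedge\absacc}{3n}$. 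A minor nuisance — a deflated block too small to satisfy the requirement $10\fward\le\|H\|$ of $\oneig$ — is dispatched by returning the all-zeros multiset (backward error $<\wcc$, absorbed into $\rho$).
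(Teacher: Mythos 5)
Your proposal follows essentially the same structure as the paper's proof: degrade the shattering parameter from $2\epsilon$ to $\epsilon$ along the recursion while preserving norm bounds (the paper packages this as Lemma~\ref{lem:controloftheparameters} rather than an inductive ``admissibility'' predicate, but the content and the per-step applications of Lemma~\ref{lem:decrementeps}, Lemma~\ref{lem:deflationpseudospectrum}, Proposition~\ref{prop:rhformguarantees}, Proposition~\ref{prop:findone}, and Proposition~\ref{prop:decouple} are identical), bound the recursion tree by $n-1$ internal vertices, union-bound the per-node failure probability $\phi/n$ over those vertices, and chain the subroutine precision requirements off the single hypothesis~\eqref{assum:eig}. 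Two small bookkeeping issues: your per-level loss budget $\rho$ omits the $m_1$ factor that appears in the $\decouple$ backward-error bound (the paper instead shows each of the three subroutines individually contributes at most $\wcc$ under~\eqref{assum:eig}, giving $3(n-1)\wcc\le\absacc$), and your ``minor nuisance'' of a block too small for $\oneig$'s requirement $10\fward\le\|H\|$ does not actually arise — the paper's Lemma~\ref{lem:controloftheparameters} shows every working matrix has norm at least $\wcc/2 = 10\fward$, because any non-leaf block retains a subdiagonal entry of modulus at least $\wcc$ after deflation. Neither issue changes the argument.
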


\subsection{Preservation of the Norm and Pseudospectral Parameters}

Before delving into the analysis of $\eig$ we will show that the global data provides valuable information throughout the execution of the algorithm. The first observation here is that the only subroutine of $\eig$ that accesses  the global data is $\oneig$, so, to ensure correctness, the only requirements  regarding the global data that need to be fulfilled are the ones ensured by the following lemma. 

\begin{lemma}
\label{lem:controloftheparameters}
Suppose that the assumptions of Theorem \ref{thm:mainquantitative} are satisfied and let $H'$ and $M'$ be any values acquired by the variables $H$ and $M$. If every while loop (line \ref{line:eigwhile}) involved in the production of $H'$ and $M'$ ended with $\oneig$ being terminated successfully, then:  
\begin{enumerate}[label=\roman*)]
    \item $\frac{\wcc}{2}\leq \min\{ \|H'\|, \|M'\|\} \leq \max\{\|H'\|, \|M'\|\} \leq 2\Sigma$. 
    \item $\Lambda_{\epsilon}(H')$ and $\Lambda_{\epsilon}(M')$ are $\shat$-shattered. 
\end{enumerate}
\end{lemma}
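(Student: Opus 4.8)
The plan is to prove (i) and (ii) simultaneously by induction on the depth $d$ of the recursive call to $\findritz$ in whose body $M'$ (resp.\ $H'$) arises, the top-level call having depth $0$. Two structural facts drive the argument. First, $\decouple$ guarantees $|\next{H}_{n,n-1}|\le\wcc$, so the ensuing $\deflate(\hat H,\wcc)$ always splits off at least the last coordinate; hence each recursive call strictly lowers the dimension and the recursion tree has depth at most $n-1$. Second, (\ref{assum:eig}) forces $\mach$ below any fixed inverse polynomial in $n$, so the $\mach$-order perturbations incurred by $\rhform$ (Proposition \ref{prop:rhformguarantees}) and $\decouple$ (Proposition \ref{prop:decouple}) at a single node are $O(n^{5/2}\mach\,\scale)$, and their total over the whole tree is $o(\wcc)=o(\epsilon\wedge\absacc)$. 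I would carry down the recursion the invariant: a matrix $M'$ at depth $d$ satisfies $\wcc\le\|M'\|\le 2\scale$, and $\Lambda_{\epsilon_d}(M')$ is $\shat$-shattered, where $\epsilon_d:=2\epsilon-\rho_d$ with $\rho_d$ the accumulated (deterministic) perturbation budget spent through depth $d$. The point is that $\rho_d\le (d+1)\wcc+o(\wcc)\le n\wcc+o(\wcc)=(\epsilon\wedge\absacc)/3+o(\wcc)<\epsilon$, so $\epsilon_d>\epsilon$ always, and (ii) then follows from $\Lambda_\epsilon(M')\subset\Lambda_{\epsilon_d}(M')$. The base case $d=0$ matches the invariant with $\epsilon_0=2\epsilon$: by the hypotheses of Theorem \ref{thm:mainquantitative}, $\scale/2\le\|M\|\le\scale$ (so $\wcc\le\absacc/3n=\delta\scale/6n\le\|M\|$) and $\Lambda_{2\epsilon}(M)$ is $\shat$-shattered.

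For the inductive step I would trace $M'$ through one body of $\findritz$. Line \ref{line:eigrhess}: $\rhform$ outputs $H$ with $\|H-Q^*M'Q\|\le\crh\|M'\|n^{5/2}\mach$ for a unitary $Q$, and $\Lambda_{\epsilon'}(H)$ is $\shat$-shattered with $\epsilon'=\epsilon_d-\crh\|M'\|n^{5/2}\mach>\epsilon$; by Lemma \ref{lem:kappavfromshattering} this makes $H$ diagonalizable with $\kappa_V(H)\le n\shat/\epsilon'$, and I also keep the $\rhform$ guarantee $\P[Z_H=\lambda]\ge p$ for all $\lambda\in\Spec H$ (valid on the event whose probability Theorem \ref{thm:mainquantitative} tracks). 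These are exactly the standing requirements of $\oneig$ apart from $10\fward\le\|H\|\le 2\scale$, which holds because $\|H\|=(1\pm o(1))\|M'\|$, $\wcc\le\|M'\|\le 2\scale$, and $10\fward=\wcc/2$. The hypothesis that $\oneig$ terminated successfully — whose preconditions at this node we have just verified — then yields via Proposition \ref{prop:findone} a $\ax\lambda$ with $\tolr\le\ds{\ax\lambda}{H}\le\fward\le\wcc/2$. The while loop of line \ref{line:eigwhile} does not change $H$; in line $4$ the preconditions of $\decouple$ ($H$ diagonalizable, $0<\ds{\ax\lambda}{H}\le\wcc/2$, and $\P[|Z_H-\ax\lambda|=\ds{\ax\lambda}{H}]\ge p>0$) hold, so Proposition \ref{prop:decouple} gives $|\next H_{n,n-1}|\le\wcc$ and $\|\hat H-\tilde Q^*H\tilde Q\|\le 3.5m\|H\|\muqr(n)\mach$ for a unitary $\tilde Q$ and $m$ as in that proposition. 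Finally $\deflate(\hat H,\wcc)$ zeros out some subdiagonal entries, each of modulus $\le\wcc$; such a perturbation has at most one nonzero entry per row and per column, hence operator norm $\le\wcc$, so the block-diagonal $\hat H_{\mathrm{def}}=\mathrm{diag}(M_1,M_2,\dots)$ satisfies $\|\hat H_{\mathrm{def}}-\hat H\|\le\wcc$.

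Combining the three steps, each child $M_i$ is a principal submatrix of a matrix within operator distance $\wcc+o(\wcc)$ of a unitary conjugate of $M'$. Hence $\|M_i\|\le\|M'\|+\wcc+o(\wcc)\le 2\scale+o(1)$, which I would tighten to $\le 2\scale$ using the earlier observation that the norm never exceeds $\scale(1+o(1))+n\wcc\le 1.2\scale$. For the pseudospectrum, Lemma \ref{lem:decrementeps}(ii) together with the elementary identity $\Lambda_{\epsilon''}(\mathrm{diag}(M_1,\dots))=\bigcup_i\Lambda_{\epsilon''}(M_i)$ (block-diagonal resolvent) shows $\Lambda_{\epsilon_{d+1}}(M_i)$ is $\shat$-shattered with $\epsilon_{d+1}=\epsilon_d-\wcc-o(\wcc)$, so the invariant is restored at depth $d+1$; here $\shat$-shattering passes to the block $M_i$ because each of the original disks contains at most one eigenvalue of the block-diagonal matrix. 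The accompanying $H'$ at depth $d+1$ is $\rhform(M_i)$ or the $\decouple$ output, each a unitary conjugate of $M_i$ up to $O(n^{5/2}\mach\,\scale)$, so (i) and (ii) for $H'$ follow from those for $M_i$.

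The genuinely delicate point is the \emph{lower} bound $\wcc/2\le\min\{\|H'\|,\|M'\|\}$ away from the root: a block $M_i$ returned by $\deflate$ with $\dim M_i\ge 2$ has \emph{all} its subdiagonal entries strictly above $\wcc$ in modulus (else $\deflate$ would have split it further), so $\|M_i\|\ge|(M_i)_{21}|>\wcc$, and this survives passage to its $\rhform$ and $\decouple$ images since those alter the norm only by a $(1\pm o(1))$ factor; a $1\times 1$ block is a base case and needs no such bound. The other point requiring care is the global accounting $\rho_d<\epsilon$ — i.e.\ that the factor-$2$ slack in the hypothesis $\Lambda_{2\epsilon}(M)$ of Theorem \ref{thm:mainquantitative} is never exhausted — which is exactly where the choices $\wcc=(\epsilon\wedge\absacc)/3n$ and recursion depth $\le n-1$ are used; everything else is routine perturbation bookkeeping.
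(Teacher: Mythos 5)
Your proposal follows essentially the same route as the paper's: an induction that tracks the accumulated deviation from the initial matrix and charges the shattering budget $2\epsilon\mapsto\epsilon$ against the at-most-$n$ levels of deflation, with the lower norm bound coming from the large-subdiagonal property of the $\deflate$ output. The one misstep is the deflation step: $\deflate$ zeroes the small \emph{subdiagonals} of the Hessenberg matrix but leaves everything on and above the diagonal intact, so the resulting matrix $\hat H_{\mathrm{def}}$ is block \emph{upper triangular}, not block diagonal. Your ``elementary identity'' $\Lambda_{\epsilon''}(\hat H_{\mathrm{def}})=\bigcup_i\Lambda_{\epsilon''}(M_i)$ is therefore false as an equality --- for block triangular matrices the pseudospectrum can strictly contain the union of the diagonal blocks' pseudospectra. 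Fortunately the direction your argument actually needs, $\Lambda_{\epsilon''}(M_i)\subset\Lambda_{\epsilon''}(\hat H_{\mathrm{def}})$, does hold for block triangular matrices (the resolvent of a block triangular matrix is block triangular with diagonal blocks $(z-M_i)^{-1}$, so $\|(z-\hat H_{\mathrm{def}})^{-1}\|\ge\|(z-M_i)^{-1}\|$), and this is exactly what the paper proves, via the spectral-projector argument in Lemma \ref{lem:deflationpseudospectrum}. If you replace your equality with this one-sided inclusion and correct the ``block diagonal'' description, the rest of your accounting goes through; note only that the paper is a bit more conservative and charges $\wcc$ (not $o(\wcc)$) apiece to the $\rhform$ and $\decouple$ steps, giving $3\wcc$ per level and $3(n-1)\wcc<\epsilon\wedge\absacc$ over the tree --- both budgets close.
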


To prove the above lemma we will need the following results to control the pseudospectral parameters after each deflation step (controlling the norm  after deflation is trivial). 

\begin{lemma}[Lemma 5.9 in \cite{banks2020pseudospectral}] \label{lem:compress-shattered} 
 Suppose $P$ is a spectral projector of $M\in\bC^{n\times n}$ of rank $r\leq n$. Let $S\in \bC^{n\times r}$ be such that $S^*S=I_r$ and that its columns span the same space as the columns of $P$. Then for every $\epsilon>0$,
    $$\Lambda_\epsilon(S^*MS)\subset \Lambda_\epsilon(M).$$
    Alternatively, the same pseudospectral inclusion holds if again $S^*S=I_r$ and, instead, the columns of $S$ span the same space as the rows of $P$. 
\end{lemma}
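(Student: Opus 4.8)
Here is my plan for proving the final statement, Lemma 5.9 of \cite{banks2020pseudospectral} (the compression lemma: $\Lambda_\epsilon(S^*MS)\subset \Lambda_\epsilon(M)$).

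\textbf{Overall strategy.} The plan is to use the characterization of the $\epsilon$-pseudospectrum via resolvent norms, $\Lambda_\epsilon(N) = \{\lambda : \|(\lambda-N)^{-1}\|\geq 1/\epsilon\}$ (with the convention that $\lambda\in\Lambda_\epsilon(N)$ whenever $\lambda\in\Spec N$). Take $\lambda\in\Lambda_\epsilon(S^*MS)$; I want to show $\|(\lambda-M)^{-1}\|\geq 1/\epsilon$. If $\lambda\in\Spec M$ we are done immediately, so assume $\lambda\notin\Spec M$. The goal is to produce a unit (or small-norm) vector $x$ with $\|(\lambda-M)^{-1}x\|\geq (1/\epsilon)\|x\|$, starting from a corresponding near-null vector of $\lambda-S^*MS$.

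\textbf{Key steps in order.} First, since $\lambda\in\Lambda_\epsilon(S^*MS)$ and $\lambda\notin\Spec M$, pick a unit vector $y\in\bC^r$ with $\|(\lambda - S^*MS)y\|\leq \epsilon$ (if $\lambda\in\Spec(S^*MS)$ take $y$ in the kernel). Second, use the hypothesis that the columns of $S$ span $\operatorname{ran}P$ where $P$ is a spectral projector of $M$: the key structural fact is that $\operatorname{ran}P = \operatorname{ran}S$ is an \emph{invariant subspace} of $M$, hence $SS^*$ acts as identity on it and, crucially, $S^*MS S^* = S^* M$ on that subspace, i.e. $M$ commutes with $SS^*$ in the appropriate sense: $SS^* M SS^* = M SS^*$. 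Concretely, set $x := Sy$, a unit vector in $\operatorname{ran}S$. Then compute
\begin{align*}
(\lambda - M)x &= (\lambda - M)Sy = S(\lambda - S^*MS)y + \big((\lambda-M)S - S(\lambda - S^*MS)\big)y,
\end{align*}
and the bracketed term is $\big(\lambda S - MS - \lambda S + SS^*MS\big)y = -(I - SS^*)MSy = -(I-SS^*)Mx$, which \emph{vanishes} because $\operatorname{ran}S$ is $M$-invariant, so $Mx\in\operatorname{ran}S$ and $(I-SS^*)Mx = 0$. Hence $(\lambda-M)x = S(\lambda-S^*MS)y$, so $\|(\lambda-M)x\|\leq \epsilon\|y\| = \epsilon = \epsilon\|x\|$. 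Third, conclude: since $\lambda\notin\Spec M$, $(\lambda-M)$ is invertible, and setting $z := (\lambda-M)x$ we have $\|(\lambda-M)^{-1}z\| = \|x\| = 1 \geq (1/\epsilon)\|z\|$, so $\|(\lambda-M)^{-1}\|\geq 1/\epsilon$, i.e. $\lambda\in\Lambda_\epsilon(M)$.

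\textbf{The row-span variant and the main obstacle.} For the alternative statement (columns of $S$ span the \emph{rows} of $P$, i.e. $\operatorname{ran}S = \operatorname{ran}P^* = (\ker P)^\perp$), I would run the same argument on $M^*$: note $\Lambda_\epsilon(N) = \overline{\Lambda_\epsilon(N^*)}$ (resolvent norms are invariant under conjugate transpose since $\|A\| = \|A^*\|$), and $\operatorname{ran}P^*$ is a spectral projector range for $M^*$, so $(S^*MS)^* = S^*M^*S$ falls under the first case. The main obstacle — really the only non-mechanical point — is cleanly justifying the invariance/commutation identity $(I-SS^*)MSy = 0$: this requires knowing that a spectral projector $P$ of $M$ has $\operatorname{ran}P$ invariant under $M$ ($MP = PM = PMP$), that $SS^*$ is the orthogonal projector onto $\operatorname{ran}P = \operatorname{ran}S$ (which follows from $S^*S = I_r$ and the column-span hypothesis), and hence $Mx\in\operatorname{ran}(SS^*)$ for $x\in\operatorname{ran}S$. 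Everything else is a two-line resolvent-norm estimate. I should also double-check the edge cases $r=n$ (trivial, $S$ unitary) and $\lambda\in\Spec(S^*MS)\setminus\Spec M$, both of which the argument handles since we only ever used $\|(\lambda-S^*MS)y\|\leq\epsilon$.
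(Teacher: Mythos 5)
The paper does not prove this lemma itself; it imports it verbatim as Lemma 5.9 of \cite{banks2020pseudospectral}, so there is no in-paper proof to compare against. Your proof is correct and is the standard argument: $\operatorname{ran}S=\operatorname{ran}P$ is an $M$-invariant subspace (since $PM=MP$, $P^2=P$), $SS^*$ is the orthogonal projector onto it (since $S^*S=I_r$), hence $(I-SS^*)MS=0$, giving the intertwining identity $(\lambda-M)S=S(\lambda-S^*MS)$; since $S$ is an isometry, this carries a unit near-null vector $y$ of $\lambda-S^*MS$ to a unit vector $x=Sy$ with $\|(\lambda-M)x\|=\|(\lambda-S^*MS)y\|\le\epsilon$, which yields $\lambda\in\Lambda_\epsilon(M)$. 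The reduction of the row-span case to the column-span case for $M^*$, using $\Lambda_\epsilon(N^*)=\overline{\Lambda_\epsilon(N)}$ and that $P^*$ is a spectral projector for $M^*$, is also correct. One small stylistic quibble: the parenthetical claim ``$S^*MSS^*=S^*M$ on that subspace'' is loosely worded and not what you actually use; the clean and sufficient fact, which you do state and justify, is $(I-SS^*)MS=0$.
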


Combining Lemmas \ref{lem:decrementeps} and \ref{lem:compress-shattered} we can  show the following. 

\begin{lemma}[Pseudospectrum After Deflation]
\label{lem:deflationpseudospectrum}
Let $H\in \bC^{n\times n}$ be a Hessenberg matrix and and $1\leq r\leq n-1$ . Let $H_{-}$ and $H_{+}$ be its upper-left and lower-right $r\times r$ and $(n-r)\times (n-r)$ corners respectively. If $|H_{r+1, r}|\leq \epsilon'$ then 
$$\Lambda_{\epsilon-\epsilon'} (H_-) \cup \Lambda_{\epsilon-\epsilon'}(H_+) \subset \Lambda_\epsilon(H).$$
\end{lemma}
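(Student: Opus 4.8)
The plan is to view the block-triangular decoupling $H$ induces as a small perturbation of an exactly block-triangular matrix, and then handle the block-triangular case using the compression lemma. Write $H$ in the $2\times 2$ block form with diagonal blocks $H_-$ (size $r\times r$) and $H_+$ (size $(n-r)\times(n-r)$). Because $H$ is Hessenberg, the only nonzero entry of the lower-left block is $H_{r+1,r}$, which by hypothesis has absolute value at most $\epsilon'$. Let $E$ be the matrix agreeing with $H$ except that this $(r+1,r)$ entry is set to zero; then $\|H-E\|=|H_{r+1,r}|\le\epsilon'$, and $E$ is block upper-triangular with diagonal blocks $H_-$ and $H_+$.

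Next I would observe that $\Lambda_{\epsilon-\epsilon'}(H_-)\cup\Lambda_{\epsilon-\epsilon'}(H_+)\subset\Lambda_{\epsilon-\epsilon'}(E)$. This is exactly the statement that each diagonal block of a block-triangular matrix has pseudospectrum contained in that of the whole matrix; it follows from Lemma \ref{lem:compress-shattered} applied to the spectral projector $P$ of $E$ onto the invariant subspace spanned by the first $r$ coordinate vectors: for that $P$ one can take $S=\begin{bmatrix}I_r\\0\end{bmatrix}$, giving $S^*ES=H_-$, hence $\Lambda_{\epsilon-\epsilon'}(H_-)\subset\Lambda_{\epsilon-\epsilon'}(E)$; symmetrically (using the ``rows of $P$'' alternative, or the analogous left-invariant subspace, with $S=\begin{bmatrix}0\\I_{n-r}\end{bmatrix}$ and $S^*ES=H_+$) one gets $\Lambda_{\epsilon-\epsilon'}(H_+)\subset\Lambda_{\epsilon-\epsilon'}(E)$.

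Finally I would promote this back to $H$ by the perturbation bound. Since $H = E + (H-E)$ with $\|H-E\|\le\epsilon'$, part (ii) of Lemma \ref{lem:decrementeps} gives $\Lambda_{\epsilon-\epsilon'}(E)=\Lambda_{(\epsilon)-\|H-E\|+(\|H-E\|-\epsilon')}(E)$; more directly, applying Lemma \ref{lem:decrementeps}(ii) with the roles ``$M$'' $=E$, ``$E$'' $=H-E$, and pseudospectral radius $\epsilon$, we get $\Lambda_\epsilon(H)=\Lambda_\epsilon(E+(H-E))\supset\Lambda_{\epsilon-\|H-E\|}(E)\supset\Lambda_{\epsilon-\epsilon'}(E)$, where the last inclusion uses $\|H-E\|\le\epsilon'$ together with the monotonicity $\Lambda_{a}(E)\subset\Lambda_b(E)$ for $a\le b$ (immediate from the definition \eqref{eqn:pseudodef2}). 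Chaining the two displays yields $\Lambda_{\epsilon-\epsilon'}(H_-)\cup\Lambda_{\epsilon-\epsilon'}(H_+)\subset\Lambda_\epsilon(H)$, as claimed.

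The only mild subtlety — and the step I would be most careful about — is making sure the compression lemma is invoked with the correct one-sided invariant subspace for each block of the exactly triangular matrix $E$ (the upper-left block needs the right-invariant subspace $\mathrm{span}(e_1,\dots,e_r)$, the lower-right block needs the corresponding left-invariant subspace, which is where the ``rows of $P$'' alternative in Lemma \ref{lem:compress-shattered} is used); everything else is bookkeeping with norms and the elementary monotonicity of $\Lambda_\epsilon$ in $\epsilon$.
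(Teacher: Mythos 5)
Your proof is correct and follows exactly the same route as the paper's: zero out the small subdiagonal entry to obtain a block upper-triangular matrix, apply Lemma \ref{lem:compress-shattered} to each diagonal block (the "columns of $P$" alternative for $H_-$ and the "rows of $P$" alternative for $H_+$), and then transfer back to $H$ using the pseudospectral perturbation bound of Lemma \ref{lem:decrementeps}. The only difference is cosmetic wording; you also correctly read Lemma \ref{lem:decrementeps}(ii) as the inclusion $\Lambda_{\epsilon-\|E\|}(M)\subset\Lambda_\epsilon(M+E)$, which is the direction the paper itself uses in its proofs.
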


\begin{proof}
Let $H_0$ be the matrix obtained by zeroing out the $(r+1, r)$ entry of $H$. By Lemma \ref{lem:decrementeps} and the assumption $|H_{r+1, r}|\leq \epsilon'$ we get $\Lambda_{\epsilon -\epsilon'}(H_0)\subset \Lambda_\epsilon(H)$.  

We will begin by showing that $\Lambda_{\epsilon-\epsilon'}(H_+)\subset \Lambda (H_0)$.  Let $w\in \bC^r$ be any left eigenvector of $H_+$ and note that, since $H_0$ is block upper triangular, $0_{n-r}\oplus w \in \bC^{n\times n}$ is a left eigenvector of $H_0$. Hence, there is a spectral projector $P$ of $H_0$ for which its left eigenvectors (equivalently its rows) span the space $\text{span}\{e_{n-r+1}, \dots, e_n\}$. Hence the span of the columns of the $n\times (n-r)$ matrix  $$S= \left( \begin{array}{cc}
     0  \\
     I_{n-r} 
\end{array} \right)$$ 
coincides the span of the rows of $P$. So, by Lemma \ref{lem:compress-shattered}, $\Lambda_{\epsilon-\epsilon'} (H_+) = \Lambda_{\epsilon-\epsilon'}(SHS^*) \subset \Lambda_{\epsilon-\epsilon'}(H_0)$.  

The proof that $\Lambda_{\epsilon-\epsilon'}(H_-) \subset \Lambda_{\epsilon-\epsilon'}(H_0)$ is very similar, with the sole difference that this time one should look at the right eigenvectors of $H_-$, and work with columns (rather than rows) of the spectral projector. 
\end{proof}

We can now proceed to the proof of the lemma. 

\begin{proof}[Proof of Lemma \ref{lem:controloftheparameters}]
First note that in each call to $\eig$ the working matrix gets modified exactly once by each of the subroutines $\rhform$, $\decouple$ and $\deflate$. So, there is a sequence of the form 
$$M=M_1, F_1, F_1', M_2, F_2, F_2' \dots$$ that ends in $H'$ (respectively $M'$), and such that  $F_i =\rhform(M_i),F_i'=\decouple(F_i, , \ax{\lambda}_i, \omega)$ and $M_{i+1}$ is one of the matrices in the output of $\deflate(F_i')$. Moreover, by the assumption that $\oneig$ terminated successfully at the end of each while loop, we have that
\begin{equation}
\label{eq:distassumption}
\tolr \leq \ds{\ax{\lambda}_i}{F_i}\leq \beta.
\end{equation}
We will show by induction that for every $i\leq n$ the pseudospectra $\Lambda_{2\epsilon-\epsilon_{i, 0}}(M_i), \Lambda_{2\epsilon-\epsilon_{i, 1}}(F_i)$ and $\Lambda_{2\epsilon-\epsilon_{i, 2}}(F_i')$ are $\shat$-shattered, where 
$$\epsilon_{i, j} := (3(i-1)+j)\wcc= \frac{3(i-1)+j}{3n}(\absacc \wedge \epsilon),$$
and that $\|M_i\|\leq \scale + \epsilon_{i, 0}, \|F_i\|\leq \scale + \epsilon_{i, 1}$ and $\|F_i'\|\leq \scale+\epsilon_{i, 2}$. Note that in particular this will imply that $\epsilon$-pseudospectra of the $M_i, F_i$ and $F_i'$ are $\shat$-shattered, and their norms are bounded by $2\scale$ (since $\epsilon\leq \scale$).  

That $M_1=M$ has the advertised pseudospectral and norm properties follows from the assumption about the global data. We can then induct: 
\begin{itemize}
    \item \emph{Effect of $\rhform$.} Assume that $\Lambda_{2\epsilon-\epsilon_{i, 0}}(M_i)$ is $\shat$-shattered and $\|M_i\| \leq \scale +\epsilon_{i, 0}$. Because $F_i =\rhform(M_i)$, and since (\ref{assum:oneig}) implies that
    $$\mach \leq \mach_{\rhform}(n) \leq \mach_{\rhform}(\dim(M_i)),  $$
we can apply Proposition \ref{prop:rhformguarantees} to get that $\Lambda_{2\epsilon-\epsilon_{i, 0}-\epsilon'}(F_i)$ is $\shat$-shattered for 
\begin{align*}
\epsilon' & = \crh \|M_i\| \dim(M_i)^{5/2} \mach 
\\ & \leq 2 \crh \scale n^{5/2} \mach & &\dim(M_i)\leq n,\, \|M_i\|\leq 2\scale
\\ & \leq \wcc && \text{by (\ref{assum:eig})}.
\end{align*}
So, it follows that $\Lambda_{2\epsilon-\epsilon_{i, 1}}(F_1)$ is $\shat$-shattered. And in the  same way we can get $\|F_i\|\leq \scale + \epsilon_{i, 1}$. 
\item \emph{Effect of $\decouple$.} Now assume that $\Lambda_{2\epsilon-\epsilon_{i, 1}}(F_i)$ is $\shat$-shattered and $\|F_i\|\leq \scale + \epsilon_{i, 1}$. Let $p$ and $\fward$ be as in line \ref{line:eigparameters} of $\eig$ and define 
\begin{equation}
\label{eq:defofmtwo}
m_2:= \left\lceil \frac{\log(\shat^2n^2/p \epsilon^2)}{2 \log(3\omega/4\beta)} \right\rceil =  \left\lceil \frac{\log(\shat^2n^2/p \epsilon^2)}{2 \log(15)}  \right\rceil.
\end{equation}
Now, because $m_2 \leq \lceil .3 \log(\shat n/\epsilon)+ .15 \log(1/p) \rceil$, it is clear that $m_2\leq m_1$, and then it is easy to see that (\ref{assum:eig}) implies 
$$\mach \leq \mach_{\decouple}(2\scale, \shat n /\epsilon, p, \tolr).$$
So, because $\|F_i\|\leq 2\scale$ (by assumption), $\kappa_V(F_i)\leq \shat \epsilon/n$ (since $\Lambda_\epsilon(F_i)$ is $\shat$-shattered and by Lemma \ref{lem:kappavfromshattering}), and $\ds{\ax{\lambda}}{F_i}\geq \tolr$ (by the assumption in (\ref{eq:distassumption})), we can apply Proposition \ref{prop:decouple} to get that there exists a unitary matrix $Q$ for which 
\begin{align*}
\|F_i'-Q^* F_iQ\| & \leq 3.5 m_1 \|F_i\|\muqr(n) \mach
\\ & \leq 7 m_1 \scale \muqr(n) \mach  && \|F_i\| \leq 2\scale
\\ & \leq \wcc && \text{by (\ref{assum:eig})}. 
\end{align*}
Then, by Lemma \ref{lem:decrementeps} and the assumption that $\Lambda_{2\epsilon-\epsilon_{i, 1}}(F_i)$ is $\shat$-shattered, it follows that $\Lambda_{2\epsilon-\epsilon_{i, 2}}(F_i')$ is $\shat$-shattered. And because the norm is preserved under unitary conjugation we also get that $\|F_i'\|\leq \scale + \epsilon_{i, 2}$. 
\item \emph{Effect of $\deflate$. } Assume that $\Lambda_{2\epsilon-\epsilon_{i, 2}}(F_i')$ is $\shat$-shattered, and recall that $M_{i+1}$ is an output of $\deflate(F_i', \wcc)$. Then, by Lemma \ref{lem:deflationpseudospectrum} we have that $$\Lambda_{2\epsilon-\epsilon_{i+1, 0}}(M_{i+1})=\Lambda_{2\epsilon-\epsilon_{i, 2} -\wcc}(M_{i+1})\subset \Lambda_{2\epsilon-\epsilon_{i, 2}}(F_i')$$ and hence $\Lambda_{2\epsilon-\epsilon_{i+1, 0}}(M_{i+1})$ is $\shat$-shattered.  Similarly, we can note that $\|M_{i+1}\|\leq \|F_i''\|+\wcc \leq \scale+ \epsilon_{i+1, 0}$, which concludes the induction. 
\end{itemize}
Now, since the depth of the recursion tree of $\eig$ is at most $n$, and we have proven the above claim for any $M_i, F_i, F_i''$ with $i\leq n$, we can conclude that $\Lambda_{\epsilon}(H')$ is $\shat$-shattered (resp. $\Lambda_{\epsilon}(M')$) and $\|H'\|\leq 2\scale$ (resp. $\|M'\|\leq 2\scale$), as we wanted to show.  

Finally, to show that $\wcc/2\leq \|H'\|$ (resp. $\wcc/2 \leq \|M'\|$), first note that $\wcc \leq \|M_i\| $ for every $i$. Indeed, when $i=1$ we can use the assumption $\delta\leq 1$, which yields $ \absacc \leq \|M\|$, and combine this with $ \wcc\leq \absacc$. For $i>1$ note that $M_i$ is an output of $\deflate(F_{i-1}'', \wcc)$, and hence its subdiagonals are guaranteed to have absolute value at least $\wcc$, which implies that $\wcc\leq \|M_i\|$. We can then proceed as above (using slightly stronger bounds) to show that $\|M_i-F_i\|\leq \wcc/2$ and $\|M_i-F_i''\|\leq \wcc/2$. So the proof is concluded.  
\end{proof}

\subsection{Analysis of $\eig$}

We are now ready to prove Theorem \ref{thm:mainquantitative}. For clarity, let us divide the proof in several parts. 

\paragraph{Backward stability.} Assume that $\eig$ terminates and outputs $\Lambda$.  Moreover,  assume that when running $\eig$, at the end of all the while loops from line \ref{line:eigwhile}, the subroutine $\oneig$ terminated successfully (later we will prove that this occurs with probability at least $1-\phi$). 

We will show that $\Lambda$ is the spectrum of a matrix $\ax{M}$ with $\|\ax{M}-M\|\leq \absacc$ (which combined  with the assumption about the global data gives $\|\ax{M}-M\|\leq \delta \|M\|$). To be precise we will show an equivalent statement, namely that $\Lambda$ is the spectrum of a matrix that is at distance at most $\absacc$ from the class of matrices that are unitarily equivalent to $M$. To do this, for the purpose of the analysis, it will be convenient to imagine that during the deflation process (after setting to zero the small subdiagonals) instead of cutting out the blocks on the diagonal and considering them as separate subproblems, one keeps the full $n\times n$ matrix and continues to operate  on the full matrix in the obvious way. With this view point the algorithm terminates when the working matrix becomes an upper triangular matrix, and its diagonal elements are precisely the elements of $\Lambda$. 

 In the proof of Lemma \ref{lem:controloftheparameters} it was shown that the only subroutines that deviate the working matrix from the unitary orbit of the original matrix are $\rhform, \decouple$ and $\deflate$. Moreover, it was shown that when each of these subroutines is applied, the corresponding backward error incurred is at most of size $\wcc$.  So we need only to give an upper bound for the number of times these subroutines are called. To do this  consider $\calT_n(M)$, the recursion tree of $\eig$, where the input matrix $M$ is placed at the root, and then the children of any vertex $v$ are in one-to-one correspondence with the matrices outputted after running $\deflate$ on the matrix associated to $v$. It is  clear from the construction that leaves correspond to matrices of dimension 1, and internal vertices (vertices that are not leaves) correspond to higher dimensional matrices. Now note that for any internal vertex $v$ it holds that the sum of the dimensions of the matrices associated to the children of $v$ equals the dimension of the matrix associated to $v$. Then, by induction on $n$ it follows that $\calT_n(M)$ has at most $n-1$ internal vertices. And, since the relevant subroutines are only called once at times corresponding to internal leaves, we conclude that each of these subroutines was called at most $n-1$ times. Hence, the ultimate deviation from the original unitary equivalence class is at most 
 $$3(n-1)\omega = \frac{3(n-1)}{3n}(\epsilon\wedge \absacc) \leq \absacc,$$ 
as we wanted to show. 

\paragraph{Precision requirements.} To ensure that the precision has been set to be small enough, so that the precision requirements of each subroutine are satisfied throughout the iteration, we will show that
$$\mach_{\eig}(n, \scale, \epsilon, \shat, \delta, \phi, n) \leq \min \{ \mach_{\rhform}(n), \mach_{\oneig}(n, 2\scale, \epsilon, \shat, p, \fward, \varphi), \mach_{\decouple}(n, 2\scale, \shat n/\epsilon, p, \tolr)\}.$$
First, that $\mach_{\eig}(n, \scale, \epsilon, \shat, \delta, \phi, n)\leq \mach_{\rhform}(n)$ is trivial. On the other hand, by definition we have
\begin{align*}
    \mach_{\oneig}(n, \scale, \epsilon, \shat, p, \fward, \phi) & = \mach_{\dispec}\big(n,m_1, 10,2 \scale, n\shat/\epsilon,\tolr\big) && \text{for } m_1, \tolr \text{ as in  (\ref{eq:etaandm})}
    \\ & \geq \frac{\epsilon}{ 6\cdot 10^3\croot   \cdot  \muqr(n)  n  \shat }\left(\frac{\tolr}{44\scale}\right)^{2m_1} && \text{(\ref{assum:oneig}) and  (\ref{assum:comptau})} 
\end{align*}
So from the  (\ref{assum:eig}) it is clear that $ \mach_{\eig}(n, \scale, \epsilon, \shat, \delta, \phi, n)\leq  \mach_{\oneig}(n, 2\scale, \epsilon, \shat, p, \fward, \phi)$.  Finally
\begin{align*}
    \mach_{\decouple}\big(n, 2\scale, \shat n /\epsilon, p, d\big) & = \frac{\mach_{\iqr}(n,m_2,2\scale,\shat n /\epsilon,\tolr)\omega}{16\cdot 5^{m_2}\cdot   n^{1/2} 2\scale} && \text{for }m_2 \text{ as in (\ref{eq:defofmtwo})}
    \\ & = \frac{\omega \epsilon }{16 \cdot 8 \muqr(n) n^{3/2} \shat \cdot 2\scale }\left(\frac{\tolr}{5\cdot 2\scale}\right)^{m_2} && \text{from (\ref{assum:machvsp})} 
\end{align*}
And because $m_2 \leq m_2$, from (\ref{assum:eig}) it is clear that $\mach_{\eig}(n, \scale, \epsilon, \shat, \delta, \phi, n) \leq  \mach_{\decouple}\big(n, 2\scale, \shat n /\epsilon, p, d\big)$.

\paragraph{Probability of success.} Observe that the only randomized subroutines of $\eig$ are $\rhform$ and $\oneig$.  First we will provide a lower bound for the probability that the guarantees of $\rhform$ and $\oneig$ are satisfied every time  these subroutines are called. 

Combining Lemma \ref{lem:controloftheparameters} and Proposition \ref{prop:rhformguarantees} we get that, if $\oneig$ has  succeeded every time it has been called, then for any value $H'$ acquired by the variable $H$ in line \ref{line:eigrhess} of $\eig$ we have for any $t>0$, with probability $1-nt^2$, that
$$\min_{\lambda \in \Spec{H'}} \P[Z_{H'}=\lambda] \geq \left(\frac{\epsilon t}{n^{3/2}\shat}\right)^2. $$
In particular (for $t^2= \phi/2n^2$) we get that with probability at least $1-\phi/2n$ it holds that 
$$\min_{\lambda \in \Spec{H'}} \P[Z_{H'}=\lambda] \geq p$$ 
for $p$ defined as in line \ref{line:eigparameters}. Under this event, and because of Lemma \ref{lem:controloftheparameters} and because the precision is high enough, the requirements of $\oneig$ will be met in line \ref{line:eigwhile}, and hence (for this call) $\oneig$ will succeed with probability at least $1-\varphi =1- \phi/2n$. 

Therefore, every time $\eig$ is called, both $\rhform$ and $\oneig$ will satisfy their guarantees with probability at least $1-\phi/n$. Moreover, from the backward stability proof we know that the recursion tree for $\eig$ has at most $n-1$ internal vertices. Therefore, we can conclude that all the calls to $\rhform$ and $\oneig$ will succeed with probability at least $1-\phi$, as we wanted to show. 

Now, under the assumption that $\rhform$ and $\oneig$ succeed every time, we have that the values of the variables $H$ and $\ax{\lambda}$ that are passed every time to $\decouple$ satisfy the requirements of this subroutine, and by our previous discussion we know that the precision requirements for $\decouple$ are also met. Therefore, we can apply Proposition \ref{prop:decouple} to argue that the matrix $H$ will be decouple in a finite amount of time, and by Lemma \ref{lem:controloftheparameters} we know that the pseudospectral parameters and norm guarantees will also be maintained. 

\paragraph{Running time.} From the above discussion we know that with probability at least $1-\phi$,  $\eig$ terminates successfully and moreover, throughout the algorithm, every call to $\rhform, \oneig$ and $\decouple$ will be successful, and the requirements of these subroutines will always be met. Under this event (recalling that each subroutine is called at most $n-1$ times) by Propositions  \ref{prop:rhformguarantees}, \ref{prop:findone} and \ref{prop:decouple} and using the the running times of the subroutine are monotone in the dimension of the input, we get that the running time of $\eig$ is at most 
$$(n-1)(T_{\rhform}(n)+ T_{\oneig}(n, \scale, \epsilon, \shat, p, \fward)+ T_{\decouple}(n, \shat n/\epsilon, p , \beta) ).$$
The proof is concluded by writing $p, \beta$ and $\tolr$ as a function of $\epsilon, \shat$, $\scale$ and $\delta$, and using the big-$O$ bounds provided in  Propositions  \ref{prop:rhformguarantees}, \ref{prop:findone} and \ref{prop:decouple}.

\subsection{Pseudospectral Shattering and Proof of the Main Result}
\label{sec:shatandain}

Note that Theorem \ref{thm:mainquantitative} assumes that $\eig$ has access to the parameters $\epsilon$ and $\shat$ in the global data, which control both the minimum eigenvalue gap and the eigenvector condition number of the input matrix $M\in \bC^{n\times n}$. In order to ensure that $\eig$ works on every input (without having access to $\epsilon$ and $\shat$), instead of running the algorithm on $M$ we will run it on $M+\gamma G_n$  (for $\gamma = \Theta (\delta)$ and  $G_n$ a normalized complex Ginibre matrix\footnote{That is, an $n\times n$ random matrix with independent centered complex Gaussian entries of variance $1/n$.}), and exploit the following result, whose proof we defer to Appendix  \ref{sec:shat}.\footnote{A version of this result was already proven and used in a similar context in \cite{banks2020pseudospectral}. However, since the notion of \emph{shattered pseudospectrum} from that paper differs from the one used here, we were not able to directly apply the aforementioned result.} 

\begin{lemma}[Shattering]
\label{lem:shattering}
For any $M\in \bC^{n\times n}$ and $\varphi\in (0, 1/2), \gamma\in (0, \|M\|/2)$, we have that, with probability at least $1-\varphi$,  $\Lambda_\epsilon(M+\gamma G_n)$ is $\shat$-shattered for 
$$\shat :=\frac{\varphi^{1/2}\gamma}{2\sqrt{3} n^{3/2}} \quad \text{and} \quad \epsilon := \frac{\gamma^2 \varphi}{180 \sqrt{2} \|M\| \log(1/\varphi) n^{3}}$$
\end{lemma}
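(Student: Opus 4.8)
The plan is to reduce shattering to two now-standard consequences of a Ginibre perturbation: a lower bound on the minimum eigenvalue gap and an upper bound on the eigenvector condition number. Write $M_\gamma := M + \gamma G_n$ and let $\lambda_1,\dots,\lambda_n$ be its eigenvalues. I would work on the intersection of two events: $\mathcal{E}_{\mathrm{gap}} = \{\gap(M_\gamma)\ge 3\shat\}$ and $\mathcal{E}_{\kappa} = \{\kappa_V(M_\gamma)\le \shat/\epsilon\}$. On $\mathcal{E}_\kappa$, the Bauer--Fike-type inclusion of Lemma~\ref{lem:pseudospectralbauerfike} gives
$\Lambda_\epsilon(M_\gamma)\subset \bigcup_i D(\lambda_i,\epsilon\kappa_V(M_\gamma))\subset\bigcup_i D(\lambda_i,\shat)$,
so the $n$ disks $D_i := D(\lambda_i,\shat)$ witness the Containment clause of Definition~\ref{def:shat}. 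On $\mathcal{E}_{\mathrm{gap}}$, for $i\neq j$ we have $\dist(D_i,D_j) = |\lambda_i-\lambda_j| - 2\shat \ge \gap(M_\gamma) - 2\shat \ge \shat$, which is the Separation clause (and in particular forces the disks to be disjoint, and the $\lambda_i$ distinct). Thus $\mathcal{E}_{\mathrm{gap}}\cap\mathcal{E}_\kappa$ already implies that $\Lambda_\epsilon(M_\gamma)$ is $\shat$-shattered, and it only remains to show $\P[\mathcal{E}_{\mathrm{gap}}\cap\mathcal{E}_\kappa]\ge 1-\varphi$.

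\textbf{Quantitative inputs.} The two events are controlled by quantitative Ginibre estimates from the literature \cite{banks2020pseudospectral,banks2021gaussian,banks2020overlaps,armentano2018stable}. For $\mathcal{E}_{\mathrm{gap}}$, the minimum-gap estimate for $M+\gamma G_n$ has the shape $\P[\gap(M_\gamma)\le s] = O\big((sn^{3/2}/\gamma)^2\big)$; plugging in $s = 3\shat$ with $\shat = \varphi^{1/2}\gamma/(2\sqrt3\, n^{3/2})$ makes this $O(\varphi)$, and the constant $2\sqrt3$ is chosen precisely so the resulting failure probability fits inside the $\varphi$ budget. For $\mathcal{E}_\kappa$, one uses the high-probability upper bound on $\kappa_V(M_\gamma)$ (equivalently, on $\max_i\kappa(\lambda_i)$, which is more efficient and loses only a factor $\sqrt n$ via $\kappa_V\le\sqrt{n\sum_i\kappa(\lambda_i)^2}$ as in the proof of Lemma~\ref{lem:kappavfromshattering}): this yields $\kappa_V(M_\gamma)=O\big(\|M\|\,n^{3/2}\log(1/\varphi)/(\gamma\varphi^{1/2})\big)$ with failure probability $O(\varphi)$, and setting $\epsilon := \shat$ divided by this bound produces exactly the stated value $\epsilon = \gamma^2\varphi/(180\sqrt2\,\|M\|\log(1/\varphi)\,n^3)$ — in particular the factors $\|M\|$, $\log(1/\varphi)$ and $n^{-3}$ in $\epsilon$ are inherited from this estimate. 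One then takes a union bound, splitting the $\varphi$ budget between the two events (not necessarily evenly — the gap estimate above actually wants a bit more than half), using $\gamma<\|M\|/2$ to simplify $(\|M\|+4\gamma)^2\le 9\|M\|^2$ wherever it appears.

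\textbf{Main obstacle.} The conceptual content is entirely in the two Ginibre estimates; the real work — and the reason the proof is deferred to the appendix — is that the off-the-shelf versions do not come packaged in the clean ``probability $\ge 1-\varphi$'' form needed here. In particular the usual $\kappa_V$ tail bound carries an additive dimension-dependent term (of order $e^{-n}$, or $\|M\|^2/\gamma^2$ times the Markov factor) that must be absorbed into the stated constants, possibly by handling very small $n$ (or very small $\varphi$) separately; this is also what introduces the $\log(1/\varphi)$ into $\epsilon$ rather than the $\varphi^{-1/2}$ one might naively expect. Moreover, as the footnote notes, the grid-based ``shattered pseudospectrum'' of \cite{banks2020pseudospectral} is not literally the notion of Definition~\ref{def:shat}, so that statement cannot be cited verbatim and the bookkeeping above must be redone. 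Matching the exact numerical constants ($180\sqrt2$, $2\sqrt3$, the $1/4$--$3/4$ split of the failure probability) is then routine.
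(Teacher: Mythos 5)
Your proposal is correct and follows essentially the same route as the paper's appendix proof: apply the Ginibre gap lower bound (Lemma~\ref{lem:gapbound}) at threshold $t_1=3\shat$ and the $\kappa_V$ tail bound (Lemma~\ref{lem:boundonkappaV}) at threshold $1/t_2=\shat/\epsilon$, union-bound the two failure events (the paper splits the budget evenly, $\varphi/2$ each), and then invoke Lemma~\ref{lem:pseudospectralbauerfike} on the good event to produce the disks $D(\lambda_i,\shat)$. Your "main obstacle" paragraph also correctly anticipates what the paper actually does in the proof of Lemma~\ref{lem:boundonkappaV}: the $e^{-n}$-type Gaussian tail is absorbed by optimizing a truncation level $s$ (not by treating small $n$ or small $\varphi$ as special cases), which is precisely where the $\log(1/\varphi)$ in $\epsilon$ originates.
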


The main result of this paper then follows from combining Lemma \ref{lem:shattering} with Theorem \ref{thm:mainquantitative}.

\begin{proof}[Proof of Theorem \ref{thm:main}] Start by recalling the following the well-known tail bound for the norm of a Ginibre matrix  (e.g. see  \cite[Lemma 2.2]{banks2021gaussian}) 
   \begin{equation}
   \label{eq:ginibrenormbound}
   \P[\|G_n\| \geq t]\leq 2\exp\big(-n(t-2\sqrt{2})^2\big), \quad \forall t \geq 2\sqrt{2}.
   \end{equation}
Then, for $W:= 2\sqrt{2}+\frac{1}{n^{1/2}}\log(6/\phi)^{1/2} $ we have that $$\P[\|G_n\|\leq W]\geq 1-\phi/3.$$ 
Then, given a norm estimate $\scale$ satisfying $\scale/2 \leq \|M\|(1\pm \delta/2) \leq \scale$, we will choose  $\gamma:= \frac{\delta \scale}{4W}$, so that
\begin{align*}
\P\left[\gamma \|G_n\|\leq \frac{\delta \|M\|}{2} \right] & = \P\left[ \|G_n\| \leq \frac{2 \|M\| W }{\scale} \right] 
\\ & \geq \P\left[ \|G_n\| \leq W \right] && \scale/2 \leq \|M\|
\\ & \geq 1-\phi/3.
\end{align*}
 Moreover, for this choice of $\gamma$, by Lemma \ref{lem:shattering} we have that, with probability at least $1-\phi/3$, $\Lambda_\epsilon(M+\gamma G_n)$ is $\shat$-shattered for 
$$\shat :=\frac{\phi^{1/2}\gamma}{2\sqrt{6} n^{3/2}} \quad \text{and} \quad \epsilon := \frac{\gamma^2 \phi}{540\sqrt{2}\|M\| \log(1/\phi) n^{3}}.$$
On the other hand, conditioning on $\|G_n\|\leq W$ and  $\Lambda_\epsilon(M+\gamma G_n)$ being $\shat$-shattered, we  have that $\eig(M+\gamma G_n, \delta/2, \phi/3)$ succeeds with probability at least $1-\phi/3$ when using $n, \epsilon, \shat$ and $\Sigma$ as global data  and provided that $\mach$ satisfies (\ref{assum:eig})), in which case the output $\Lambda$ will be a $\delta$-backward approximation of the spectrum of $M$. 

Hence, using a union bound we get that with probability $1-\phi$, $\eig(M+\gamma G_n, \delta/2, \phi/3)$ provides a $\delta$-accurate answer, and from Theorem \ref{thm:mainquantitative} we have that the running time and required bits of precision are as in the statement of of Theorem \ref{thm:main}. 
\end{proof}

\bibliographystyle{alpha}
\bibliography{Eig}

\appendix

\section{Anti-concentration for Random Vectors}
\label{sec:appendixproofs}

\begin{proof}[Proof of Lemma \ref{lem:anticoncentration} ]
Because the distribution of $u$ is unitarily invariant and $\|v\|=1$, we have $u^*v=_d u^*e_i=u(i)$\footnote{Given two random variables $X$ and $Y$, we use $X=_d Y$ to denote that they have the same distribution.} for every $i\in [n]$. So, for concreteness we will take $i=1$ and bound $\P[|u(1)|\leq t]$ for any $t\geq 0$. 

Now recall that  if  $X_1, \dots, X_n, Y_1, \dots, Y_n$ are independent \emph{real} standard Gaussians, then $$u=_d \frac{(X_1+iY_1, \dots, X_k+iY_n)}{\sqrt{X_1^2+Y_1^2+\cdots+ X_k^2+ Y_n^2}} $$
and in particular $|u(1)|^2 = \frac{Z_1}{Z_1+Z_2}$ where $Z_1 \sim \chi^2(2)$ and $Z_2\sim \chi^2(2n-2)$ are independent. Then, we use the well known fact  that $\frac{Z_1}{Z_1+Z_2}$ has a $\Beta(1, n-1)$ distribution, and hence its probability density function is given $f_{\Beta(1, n-1)}(s) = (n-1)(1-s)^{n-2}\cdot 1_{\{0\leq s\leq  1\}} $. It follows that, for $t\in [0, 1]$
$$ \P[|u(1)|\leq t] = \P[|u(1)|^2 \leq t^2] =  (n-1)\int_0^{t^2} (1-s)^{n-2} ds  = 1-(1-t^2)^{n-1} \leq  (n-1)t^2, $$
where the last inequality follows from Bernoulli's inequality.  
\end{proof}

\section{Pseudospectral Shattering}
\label{sec:shat}

Here we will use $G_n$ to denote a normalized complex Ginibre matrix. That a  perturbation by $\gamma G_n$ leads to shattering of the pseudospectrum of the perturbed matrix, follows easily from the following lemmas about $\gap$ and $\kappa_V$.

\begin{lemma}[Eigenvalue gap, Proposition D.5 in \cite{banks2020pseudospectral}]
\label{lem:gapbound}
For any $M\in \bC^{n\times n}$ and any $t, \gamma >0$
$$\P\Big[\gap(M+\gamma G_n) \leq   t \Big] \leq \frac{n^3 t^2}{\gamma^2}.$$
\end{lemma}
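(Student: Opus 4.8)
This is exactly Proposition~D.5 of \cite{banks2020pseudospectral} (stated there, equivalently, as $\P[\gap(M+\gamma G_n) < \gamma/(t n^{3/2})]\le 1/t^2$), so no genuinely new argument is needed; below I sketch how I would prove it. The plan is to pass, via a union bound over pairs of eigenvalues, to an anti-concentration estimate for a small singular value of a Gaussian-shifted matrix. The quadratic dependence $t^2/\gamma^2$ should ultimately reflect the fact that $G_n$ is a \emph{complex} Gaussian, so that ``a singular value is $\le\epsilon$'' is heuristically a two-real-parameter constraint, which is why the relevant probabilities scale like $\epsilon^2$ rather than $\epsilon$.

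First I would scale. Writing $A:=M+\gamma G_n$ and using $\gap(cA)=c\,\gap(A)$ together with $G_n=\gamma^{-1}(A-M)$, it suffices to prove the bound for $\gamma=1$ with $t$ replaced by $t/\gamma$; I keep $\gamma$ explicit. Fixing a measurable labeling $\lambda_1(A),\dots,\lambda_n(A)$ of the eigenvalues, one has $\{\gap(A)\le t\}\subseteq\bigcup_{i<j}\{|\lambda_i(A)-\lambda_j(A)|\le t\}$, and since there are $\binom{n}{2}\le n^2/2$ pairs it is enough to bound the probability of a single such event by a small absolute constant times $n\,t^2/\gamma^2$. For a fixed pair, if $\lambda_i$ and $\lambda_j$ are within $t$ then $\lambda_i I-A$ is singular and, writing $A=VDV^{-1}$ with $\|V\|=\|V^{-1}\|=\sqrt{\kappa_V(A)}$, every unit vector in the two-dimensional span of the $i$-th and $j$-th columns of $V$ is mapped by $\lambda_i I-A$ into a ball of radius $t\,\kappa_V(A)$; hence the second smallest singular value satisfies $s_{n-1}(\lambda_i I-A)\le t\,\kappa_V(A)$. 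The quantitative engine is then a Sankar--Spielman--Teng-type estimate, $\P[s_{\min}(B+\gamma G_n)\le\epsilon]\le O(n^2\epsilon^2/\gamma^2)$ for deterministic $B$, together with its second-smallest-singular-value refinement; feeding in $\epsilon\asymp t$ and collecting the factors of $n$ coming from the pairs, the singular-value bound, and tail bounds for $\|G_n\|$ and for $\kappa_V(A)$ (see \cite{banks2021gaussian,banks2020pseudospectral}) produces a bound of the shape $n^{O(1)}t^2/\gamma^2$.

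The step I expect to be the real obstacle is making the previous paragraph actually yield the clean $n^3 t^2/\gamma^2$ rather than something much weaker. The invariant-subspace inequality $s_{n-1}(\lambda_i I-A)\le t\,\kappa_V(A)$ only exposes the near-collision at scale $t\,\kappa_V(A)$, and a naive ``net over $z\in\{|z|\le 2\|A\|\}$ plus union bound'' fails outright: shrinking $t$ forces a proportionally finer net, and the two effects cancel, leaving a bound with no dependence on $t$ at all. The efficient route --- the one taken in \cite{banks2020pseudospectral} --- instead conditions on enough of the Gaussian to freeze the eigenvector directions attached to $\lambda_i$ and then uses the residual Gaussian randomness to push $\lambda_j$ off $\lambda_i$ at the rate $t^2/\gamma^2$, avoiding both the $\kappa_V$ loss and the net. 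I would import this conditioning argument essentially verbatim; the scaling, the union bound over pairs, and the bookkeeping of powers of $n$ are routine.
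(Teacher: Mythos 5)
The paper does not prove this lemma at all: it is imported verbatim as Proposition~D.5 of \cite{banks2020pseudospectral} (and the commented-out version of the statement in the source confirms the equivalent reparametrized form you mention). You correctly recognize this, so there is nothing in the paper itself to compare your argument against --- the ``paper's own proof'' is the citation.

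As for your sketch, it is an honest outline rather than a proof, and you say so yourself. Two remarks. First, the diagnosis in your third paragraph is astute: the inequality $s_{n-1}(\lambda_i I - A) \le t\,\kappa_V(A)$ really does inject a $\kappa_V$ factor that cannot be absorbed without further work, and the naive net-over-the-disk argument really does cancel the $t$-dependence (mesh $\sim t$ gives $\sim t^{-2}$ net points, each carrying probability $\sim t^2$, so the product is $O(1)$ --- no progress). Identifying these obstructions is the substantive part of your write-up. Second, however, your proposed resolution --- ``conditions on enough of the Gaussian to freeze the eigenvector directions attached to $\lambda_i$ and then uses the residual Gaussian randomness to push $\lambda_j$ off $\lambda_i$'' --- is a conjecture about what \cite{banks2020pseudospectral} does, not something you have checked or reproduced. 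You say you would ``import this conditioning argument essentially verbatim,'' which is tantamount to re-citing the proposition. That is acceptable here precisely because the present paper also just cites it, but it does mean your proposal contains no self-standing proof of the key anti-concentration step. If you wanted to make the sketch genuinely self-contained, the missing piece is exactly the per-pair bound $\P\big[|\lambda_i-\lambda_j|\le t\big]=O(n\,t^2/\gamma^2)$ (or an equivalent second-moment bound on the number of eigenvalues in a disk of radius $t$ scaling like $t^4$, which a covering argument would then convert into the $t^2$ gap bound); neither is derived in your outline.
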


\begin{lemma}[Eigenvector condition number ]
\label{lem:boundonkappaV}
For any $M\in \bC^{n\times n}, \gamma \in (0, \|M\|)$ and $t>0$ satisfying
$$  t < \frac{\gamma}{\|M\| n^{3/2}}, $$
we have 
$$\P\Big[\kappa_V(M+\gamma G_n)\geq \frac{ 1}{ t}  \Big] \leq 2    \left( 2\sqrt{2}+ \frac{\|M\|}{\gamma} +  \sqrt{\frac{4\log(1/t)}{n}}  \right)^2 n^3 t^2.$$
\end{lemma}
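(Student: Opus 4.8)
The plan is to reduce $\kappa_V$ of the perturbed matrix to a sum of squared \emph{eigenvalue} condition numbers, bound that sum in expectation using the Gaussian structure of the perturbation, and then combine a truncation on $\|G_n\|$ with Markov's inequality. Write $X := M + \gamma G_n$ and let $\lambda_1,\dots,\lambda_n$ be its eigenvalues with eigenvalue condition numbers $\kappa(\lambda_1),\dots,\kappa(\lambda_n)$. By Lemma 3.1 of \cite{banks2021gaussian} (the same inequality invoked in the proof of Lemma \ref{lem:kappavfromshattering}), $\kappa_V(X)\le\sqrt{n\sum_{i}\kappa(\lambda_i)^2}$, so that
$$\left\{\kappa_V(X)\ge \tfrac1t\right\}\subseteq\left\{\sum_{i}\kappa(\lambda_i)^2\ge \tfrac1{nt^2}\right\}.$$

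The key probabilistic input I would import is an area-weighted bound on the expected sum of squared eigenvalue condition numbers of a Ginibre-perturbed matrix: for every measurable $B\subseteq\bC$,
$$\E\Bigg[\sum_{i\,:\,\lambda_i(X)\in B}\kappa(\lambda_i)^2\Bigg]\le \frac{n^2}{\pi\gamma^2}\,\mathrm{Area}(B).$$
This is (a consequence of) the main technical estimate of \cite{banks2021gaussian}, and it is exactly where Gaussianity is used; note it is \emph{not} a soft consequence of resolvent moments, since the pointwise quantity $\E\|(z-X)^{-1}\|_F^2$ is infinite near the spectrum. Alternatively one could route the argument through \cite{banks2020pseudospectral} together with Lemma \ref{lem:gapbound}, at the cost of worse absolute constants.

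Granting this, the rest is a one-line truncation plus Markov. I would set $s:=\sqrt{4\log(1/t)/n}$ and $R:=\|M\|+\gamma(2\sqrt2+s)$, so $R/\gamma=2\sqrt2+\|M\|/\gamma+\sqrt{4\log(1/t)/n}$. On $\{\|G_n\|\le 2\sqrt2+s\}$ we have $\|X\|\le R$, hence $\Spec X\subset D(0,R)$, and by the Ginibre tail bound \eqref{eq:ginibrenormbound} the complementary event has probability at most $2e^{-ns^2}=2t^4$. On the good event $\sum_i\kappa(\lambda_i)^2=\sum_{\lambda_i\in D(0,R)}\kappa(\lambda_i)^2$, and the latter sum-event makes no reference to $\|X\|$, so Markov's inequality and the imported estimate give
$$\P\Bigg[\sum_{\lambda_i\in D(0,R)}\kappa(\lambda_i)^2\ge\frac1{nt^2}\Bigg]\le nt^2\cdot\frac{n^2R^2}{\gamma^2}=\frac{n^3t^2R^2}{\gamma^2}.$$
Adding the two contributions bounds $\P[\kappa_V(X)\ge 1/t]$ by $n^3t^2R^2/\gamma^2+2t^4$; since $R^2/\gamma^2\ge 8$ and $t\le1$ (which is forced by the hypothesis $t<\gamma/(\|M\|n^{3/2})$, using $\gamma<\|M\|$) one checks $2t^4\le n^3t^2R^2/\gamma^2$, so this is at most $2n^3t^2R^2/\gamma^2=2\big(2\sqrt2+\|M\|/\gamma+\sqrt{4\log(1/t)/n}\big)^2n^3t^2$, which is the claim; the same hypothesis is what renders the bound non-vacuous.

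The main obstacle is precisely the imported area-weighted expectation bound — everything downstream is bookkeeping. If one insists on self-containedness at this point, the real work is reproducing the relevant special case of the overlap/resolvent estimates for Ginibre-perturbed matrices from \cite{banks2021gaussian}; I would instead cite it and only track the constants needed to land on the exact form above.
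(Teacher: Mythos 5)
Your proof is correct and follows essentially the same route as the paper's: reduce $\kappa_V$ to $\sum_i\kappa(\lambda_i)^2$, truncate on $\|G_n\|$ via the Ginibre tail bound, and apply Lemma~\ref{lem:daviesexpectation} with Markov on a disk containing the spectrum under the truncation. The only difference is cosmetic — you fix the truncation radius so the tail contribution is exactly $2t^4$ and absorb it directly, whereas the paper introduces the auxiliary quantity $\Rho$, solves $2e^{-n(s-2\sqrt2)^2}=\Rho^2$, and then uses $2\Rho^{-2}\le t^{-2}$ to land on the same bound; your parameterization is a bit cleaner but the argument is the same.
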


All of the ideas needed to prove Lemma \ref{lem:boundonkappaV} already appeared in \cite{banks2021gaussian}, but for the convenience of the reader  we quickly outline them below. First, we begin by recalling the following result. 

\begin{lemma}[Theorem 1.5 in \cite{banks2021gaussian}]
\label{lem:daviesexpectation}
Let $M\in \bC^{n\times n}$, $\gamma \in (0, \|M\|)$,  and let $\lambda_1, \dots, \lambda_n\in \bC$ be the random eigenvalues of $M+\gamma G_n$. Then for every measurable open set $B\subset \bC$
$$
    \E \bigg[ \sum_{\lambda_i \in B} \kappa(\lambda_i)^2 \bigg] \leq \frac{n^2}{\pi \gamma^2} \area (B). 
$$
\end{lemma}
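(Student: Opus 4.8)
The goal is to bound $\E\big[\sum_{\lambda_i \in B} \kappa(\lambda_i)^2\big]$ by $\frac{n^2}{\pi\gamma^2}\area(B)$, where $\lambda_i$ are the eigenvalues of $X := M + \gamma G_n$ and $\kappa(\lambda_i)$ is the eigenvalue condition number (the product of the norms of the associated right and left unit eigenvectors, normalized so $w_i^* v_i = 1$). The plan is to express the left-hand side as the integral over $B$ of a one-point intensity function for the eigenvalues weighted by the (conditional) expected squared condition number, and then use the explicit formula from Chalker--Mehlig / Bourgade--Dubach type computations for the complex Ginibre ensemble, pushed through the affine change of variables $G_n \mapsto M + \gamma G_n$.

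\medskip

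\textbf{Step 1: Reduce to a density statement.} Write $\mu_1(z)\,dz$ for the mean empirical measure (one-point correlation function) of the eigenvalues of $X$, i.e. $\E[\#\{i : \lambda_i \in B\}] = \int_B \mu_1(z)\,dz$, and let $\mathcal{D}(z) := \E[\kappa(\lambda)^2 \mid \lambda \text{ is an eigenvalue at } z]$ be the conditional expectation of the squared condition number given an eigenvalue at $z$. Then by definition
\begin{equation*}
\E\bigg[\sum_{\lambda_i \in B} \kappa(\lambda_i)^2\bigg] = \int_B \mathcal{D}(z)\,\mu_1(z)\,dz,
\end{equation*}
so it suffices to show the pointwise bound $\mathcal{D}(z)\,\mu_1(z) \le \frac{n^2}{\pi\gamma^2}$ for every $z \in \bC$. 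This reduces everything to computing (or upper bounding) a single "diagonal overlap" intensity.

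\medskip

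\textbf{Step 2: Transfer to the standard Ginibre ensemble.} Since $X = M + \gamma G_n$ is an affine image of $G_n$, and $G_n$ itself has the distribution $\frac{1}{\sqrt n} \tilde G$ where $\tilde G$ is an $n\times n$ matrix of i.i.d. standard complex Gaussians, write $X = \frac{\gamma}{\sqrt n}\big(\tilde G + \tfrac{\sqrt n}{\gamma} M\big)$. Eigenvalue condition numbers are invariant under scaling and translation of the matrix (they are scale- and shift-invariant since shifting $X \mapsto X - cI$ does not change eigenvectors, and scaling $X \mapsto tX$ scales eigenvalues but not eigenvectors). Thus $\kappa_X(\lambda) = \kappa_{Y}(\mu)$ where $Y := \tilde G + A$ with $A := \frac{\sqrt n}{\gamma} M$ a fixed matrix, and $\mu = \frac{\sqrt n}{\gamma}\lambda$. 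The change of variables $z = \frac{\gamma}{\sqrt n} w$ turns the claimed bound $\mathcal{D}_X(z)\mu_1^X(z) \le \frac{n^2}{\pi\gamma^2}$ into $\mathcal{D}_Y(w)\,\mu_1^Y(w) \le \frac{n}{\pi}$, because the Jacobian of the rescaling contributes a factor $\frac{\gamma^2}{n}$ to the density. So it suffices to prove, for \emph{any} fixed deterministic shift matrix $A$,
\begin{equation*}
\mathcal{D}_Y(w)\,\mu_1^Y(w) \le \frac{n}{\pi}, \qquad Y = \tilde G + A,\ \tilde G \text{ i.i.d.\ standard complex Gaussian}.
\end{equation*}

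\medskip

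\textbf{Step 3: The key overlap computation and its uniform bound.} This is the main obstacle. One uses the fact that for the (shifted) complex Ginibre ensemble the joint density of eigenvalues and the Schur-form data can be written explicitly, and the quantity $\mathcal{D}_Y(w)\mu_1^Y(w)$ — which is exactly the \emph{diagonal} overlap intensity $\mathcal{O}(w) := \E[\sum_i \|L_i\|^2\|R_i\|^2 \delta(\lambda_i - w)]$ in the Chalker--Mehlig normalization — admits an exact Gaussian-integral expression. Concretely, conditioning on the eigenvalue at $w$ and integrating out the strictly-upper-triangular part $T$ of the Schur form, $\|L\|^2\|R\|^2$ for the bottom eigenvector becomes $(1 + \|t\|^2)\cdot(\text{something})$ and the resulting integral is a ratio of determinants that simplifies. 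The point we need is not the exact formula but the clean inequality that the total diagonal-overlap mass satisfies $\int_\bC \mathcal{O}(w)\,dw = \E[\sum_i \kappa(\lambda_i)^2]$ and, more sharply, that $\mathcal{O}(w) \le \frac{n}{\pi}$ pointwise; in the shift-free case $A=0$ this is the classical computation showing $\mathcal{O}(w) = \frac{n}{\pi}(1 - |w|^2/n + \dots)e^{-\text{stuff}}$, bounded by $\frac{n}{\pi}$, and the Gaussian-domination argument (an FKG/Jensen-type convexity bound, or a direct second-moment computation) shows that adding a deterministic shift $A$ can only redistribute this mass, never increase its pointwise supremum above $\frac{n}{\pi}$. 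I would either (a) cite the exact finite-$n$ Ginibre overlap formula of Bourgade--Dubach / Fyodorov and verify the pointwise bound $\frac{n}{\pi}$ directly from it, checking the shift does not matter by the scale/translation invariance already used, or (b) give the self-contained Schur-decomposition argument: write $Y = U(Z + T)U^*$ with $Z$ diagonal and $T$ strictly upper triangular, compute the Jacobian, integrate the Gaussian weight over $T$ and $U$, and extract the coefficient of the relevant $\delta$-mass.

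\medskip

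\textbf{Step 4: Assemble.} Combining Steps 1--3: $\E\big[\sum_{\lambda_i \in B}\kappa(\lambda_i)^2\big] = \int_B \mathcal{D}_X(z)\mu_1^X(z)\,dz \le \int_B \frac{n^2}{\pi\gamma^2}\,dz = \frac{n^2}{\pi\gamma^2}\area(B)$, which is the claim. The only genuinely nontrivial input is the pointwise bound $\mathcal{O}(w) \le n/\pi$ on the diagonal overlap intensity of a shifted Ginibre matrix; everything else is bookkeeping with affine invariance and a change of variables. I expect Step 3 to be where essentially all the work (and all the risk) lies — in particular, making rigorous that the deterministic shift $A$ does not inflate the pointwise supremum of the overlap density requires either the explicit formula or a careful convexity argument, and this is exactly the content that \cite{banks2021gaussian} established as their Theorem 1.5.
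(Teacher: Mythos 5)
This lemma is not proved in the paper at all: it is imported verbatim as Theorem 1.5 of \cite{banks2021gaussian}, so the relevant comparison is with the proof given there. That proof is elementary and does not touch eigenvector overlap formulas: one uses the deterministic identity $\lim_{\epsilon\to 0}\area\big(\Lambda_\epsilon(X)\cap B\big)/(\pi\epsilon^2)=\sum_{\lambda_i\in B}\kappa(\lambda_i)^2$ (valid a.s.\ since the eigenvalues are distinct and $B$ is open), the tail bound $\P\big[\sigma_{\min}(z-M-\gamma G_n)\le\epsilon\big]\le n^2\epsilon^2/\gamma^2$, which holds uniformly in the deterministic shift $M$ and in $z$, and then Fubini over $B$ followed by Fatou to exchange the limit with the expectation. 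Your plan takes a genuinely different route, through the one-point overlap intensity $\mathcal{O}(w)=\mathcal{D}(w)\mu_1(w)$ and Chalker--Mehlig/Bourgade--Dubach-type computations.

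The problem is that your Step 3 has a real gap, and it sits exactly where the entire content of the theorem lies. First, the reduction you invoke does not exist: eigenvalue condition numbers are indeed invariant under $X\mapsto tX+cI$, but the shift here is by an \emph{arbitrary} matrix $A=\sqrt{n}\,M/\gamma$, and no scale/translation invariance reduces $\tilde G+A$ to $\tilde G$; the exact finite-$n$ overlap formulas you cite are derived for the unshifted Ginibre ensemble and do not apply to a general deformation $A$. Second, the asserted ``Gaussian-domination/FKG/Jensen'' principle --- that a deterministic shift can only redistribute, never increase, the pointwise supremum of the diagonal overlap intensity --- is unsubstantiated; I know of no soft argument of this kind, and proving the uniform-in-$A$ pointwise bound $\mathcal{O}(w)\le n/\pi$ is precisely equivalent to the statement you are trying to prove, so as written the plan is circular at its only nontrivial step. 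Your Steps 1, 2 and 4 are fine bookkeeping (modulo making the disintegration defining $\mathcal{D}(z)$ rigorous), but to close Step 3 you would in effect have to redo the least-singular-value argument of \cite{banks2021gaussian}, whose advantage is exactly that the smallest-singular-value bound is insensitive to the deterministic part of the matrix.
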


We can now proceed to the proof. 

\begin{proof}[Proof of Lemma \ref{lem:boundonkappaV}]
To simplify notation put $X:= M+ \gamma G_n$ and let $\lambda_1, \dots, \lambda_n$ be its random eigenvalues. Then for any $s, t>0$
\begin{align*}
    \P\bigg[\kappa_V(X)\geq \frac{1}{t}\bigg] & = \P\bigg[\kappa_V(X)^2\geq  \frac{1}{t^2}\bigg]
    \\ & \leq \P\bigg[ \sum_{i=1}^n \kappa(\lambda_i)^2 \geq \frac{1}{n t^2}\bigg] && \kappa_V(X) \leq \sqrt{n\sum_{i=1}^n \kappa(\lambda_i)^2}
    \\ & \leq \P[\|G_n\|\geq s] + \P\bigg[\|G_n\| \leq s \text{ and } \sum_{i=1}^n \kappa(\lambda_i)^2 \geq  \frac{1}{n t^2} \bigg]. 
\end{align*}
   Moreover, from (\ref{eq:ginibrenormbound}) we have $\P[\|G_n\| \geq s]\leq 2\exp\big(-n(s-2\sqrt{2})^2\big)$. On the other hand 
\begin{align*}
 \P\left[\|G_n\| \leq s \text{ and } \sum_{i=1}^n \kappa(\lambda_i)^2 \geq \frac{1}{nt^2} \right] & \leq \P\left[ \sum_{\lambda_i\in D(0, \|M\|+s\gamma)} \kappa(\lambda_i)^2 \geq  \frac{1}{n t^2} \right]
 \\ & \leq \left(\frac{\|M\|}{\gamma}+s\right)^2 n^3t^2, 
\end{align*}
where the last inequality follows from Lemma \ref{lem:daviesexpectation} and Markov's inequality. Putting everything together we get that 
$$\P\bigg[\kappa_V(X)\geq \frac{1}{t}\bigg]\leq 2\exp\big(-n(s-2\sqrt{2})^2\big)+ \left(\frac{\|M\|}{\gamma}+s\right)^2 n^3t^2.$$
Now, to simplify notation define $\Rho := \frac{\|M\|}{\gamma} n^{3/2} t$. Then choose $s$ to be the solution of the equation $2\exp\big(-n(s-2\sqrt{2})^2\big) = \Rho^2$, and plug it into the above inequality to obtain
\begin{align*}
 \P\bigg[\kappa_V(X)\geq \frac{1}{t}\bigg]& \leq \Rho^2 +    \left( \frac{\|M\|}{\gamma}+ 2\sqrt{2} + \frac{1}{\sqrt{n}} \log(2/\Rho^2)  \right)^2 n^3 t^2
\\ &  \leq 2    \left( \frac{\|M\|}{\gamma}+ 2\sqrt{2} + \frac{1}{\sqrt{n}} \log(2/\Rho^2)  \right)^2 n^3 t^2
\\ &\leq 2    \left( \frac{\|M\|}{\gamma}+ 2\sqrt{2} + \frac{2}{\sqrt{n}} \log(1/t)  \right)^2 n^3 t^2 && 2 \Rho^{-2} \leq t^{-2}. 
\end{align*}
\end{proof}

We can now prove the shattering result.

\begin{proof}[Proof of Lemma \ref{lem:shattering}]
First, if we take $t_1:= \frac{\varphi^{1/2}\gamma}{\sqrt{2}n^{3/2}}$ and apply Lemma \ref{lem:gapbound} we get that 
$$\P[ \gap(M+\gamma G_n) \geq t_1 ] \geq 1-\varphi/2. $$
Then, taking $t_2 = \frac{\gamma \varphi^{1/2}}{60\|M\| \log(1/\varphi) n^{3/2}}$ and applying Lemma \ref{lem:boundonkappaV} we get 
\begin{align*}
\P[\kappa_V(M+\gamma G_n)\geq 1/t_2] & \leq  2    \left( 2\sqrt{2}+ \frac{\|M\|}{\gamma} + \frac{2}{\sqrt{n}} \log(1/t_2)^{1/2}  \right)^2 n^3 t^2_2
\\ &\leq 6 \left( 8 + \frac{\|M\|^2}{\gamma^2} + \frac{4}{n} \log(1/t_2) \right) n^3 t_2^2 &&  \text{AM-QM}
\\ & \leq \varphi/6+ \varphi/6+\varphi/6
\end{align*}
yielding 
$$\P[ \kappa_V(M+\gamma G_n) \leq 1/t_2 ] \geq 1-\varphi/2. $$

Now define $\shat = t_1/3$ and $\epsilon = t_1 t_2/3$. By the tail bounds obtained above we have the event $\{ \gap(M+\gamma G_n) \geq t_1 \text{ and } \kappa_V(M+\gamma G_n)\leq 1/t_2\}$ occurs with probability $1-\varphi$, and, by Lemma \ref{lem:pseudospectralbauerfike}, under this event we have that $\Lambda_\epsilon(M+\gamma G_n)$ is $\shat$-shattered, as we wanted to show. 
\end{proof}

\end{document}